\documentclass[11.5pt,twoside, a4paper, english, reqno]{amsart} 

\usepackage{amsaddr}
\usepackage{a4wide}
\usepackage{amscd}
\usepackage{amssymb}
\usepackage{amsthm}
\usepackage{graphicx}
\usepackage{amsmath,calrsfs}
\usepackage{latexsym}
\usepackage{enumitem,dsfont}

\usepackage{upref}

\usepackage[usenames,dvipsnames]{xcolor}
\usepackage[
colorlinks=true,
linkcolor=blue,
citecolor=blue,
urlcolor=blue
]{hyperref}

\usepackage{color,graphics}
\usepackage{pdfsync}
\usepackage{ulem,cancel,pgf}

\usepackage{frcursive}

\theoremstyle{plain}
\newtheorem{theorem}{Theorem}[section]
\newtheorem{lemma}[theorem]{Lemma}
\newtheorem{proposition}[theorem]{Proposition}

\theoremstyle{definition}
\newtheorem{definition}{Definition}[section]

\newtheorem{remark}{Remark}[section]

\newtheorem{question}{Question}[section]

\newtheorem*{maintheorem*}{Main Theorem}
\newtheorem*{maincorollary*}{Main Corollary}

\DeclareFontFamily{U}{BOONDOX-calo}{\skewchar\font=50 }
\DeclareFontShape{U}{BOONDOX-calo}{m}{n}{<-> s*[1.05] BOONDOX-r-calo}{}
\DeclareFontShape{U}{BOONDOX-calo}{b}{n}{<-> s*[1.05] BOONDOX-b-calo}{}
\DeclareMathAlphabet{\mathcalb}{U}{BOONDOX-calo}{m}{n}
\SetMathAlphabet{\mathcalb}{bold}{U}{BOONDOX-calo}{b}{n}
\DeclareMathAlphabet{\mathbcalb}{U}{BOONDOX-calo}{b}{n}

\numberwithin{equation}{section}
\allowdisplaybreaks

\title[On the fractional logarithmic $p$-Laplacian]{On the fractional logarithmic $p$-Laplacian}

\date{\today}

\author[Anouar Bahrouni]{Anouar Bahrouni}
\address[Anouar Bahrouni]{Mathematics Department, Faculty of Sciences, University of Monastir, 5019 Monastir, Tunisia.}
\email[Anouar Bahrouni]{anouar.bahrouni@fsm.rnu.tn}

\author[Abdelhamid Gouasmia]{Abdelhamid Gouasmia}
\address[Abdelhamid Gouasmia]{
	Department of Mathematics, Faculty of Sciences And Technology, \\
	Mohamed Cherif Messaadia University, \\
	P.O. Box 1553, Souk Ahras 41000, Algeria. \\
	Laboratoire d'equations aux d\'{e}riv\'{e}es partielles non lin\'{e}aires et histoire des math\'{e}matiques, \\
	Ecole Normale Sup\'{e}rieure, \\
	B.P. 92, Vieux Kouba, 16050 Algiers, Algeria.}
\email[Abdelhamid Gouasmia]{gouasmia.abdelhamid@gmail.com}

\author[Hichem Hajaiej]{Hichem Hajaiej}
\address[Hichem Hajaiej]{Department of Mathematics, California State University, Los Angeles, CA 90032, USA.}
\email[Hichem Hajaiej]{hichem.hajaiej@gmail.com}

\author[Anass Ouannasser]{Anass Ouannasser}
\address[Anass Ouannasser]{Faculty of Sciences, Mohammed V University, 4 Avenue Ibn Batouta, P.O. Box 1014, 10000 Rabat, Morocco.}
\email[Anass Ouannasser]{anass.ouannasser@um5r.ac.ma}

\usepackage{comment}
\begin{document}
	
\begin{abstract}
     In this paper, we introduce and investigate the fractional logarithmic $p$-Laplacian $(-\Delta)_{p}^{s+\log}$, defined as the first-order derivative with respect to the parameter $t$ of the fractional $p$-Laplacian $(-\Delta)_{p}^{t}$ evaluated at $t=s$. We establish that this operator admits the following integral representation
\[
\begin{aligned} (-\Delta)_{p}^{s+\log} u(x) &= B(N,s,p)(-\Delta)_{p}^{s}u(x)\\ &\quad -pC(N,s,p)\mathrm{P.V.}\int_{\mathbb{R}^{N}}\frac{|u(x)-u(y)|^{p-2}(u(x)-u(y))\ln |x-y|}{|x-y|^{N+sp}}dy,
\end{aligned}
\]
where $C(N,s,p)$ denotes the standard normalization constant associated with the fractional $p$-Laplacian, and $B(N,s,p)=\frac{d}{ds}\left(\ln C(N,s,p)\right)$. As a consequence of this representation, it follows that the operator is nonlocal and of logarithmic type, and may be viewed as a nonlinear analogue of the fractional logarithmic Laplace operator recently introduced by Chen et al. \cite{Chen-Chen-Hauer}. We further develop the associated functional framework in both $\mathbb{R}^{N}$ and bounded Lipschitz domains by introducing the natural energy spaces adapted to problems driven by $(-\Delta)_{p}^{s+\log}$. Within this framework, fundamental functional inequalities are established, in particular Pohozaev-type identities and D\'{\i}az-Saa inequalities, which are of independent interest and applicable to a broader class of problems. Moreover, we derive results concerning density, continuity, and compact embedding properties. We emphasize that the compactness of the embedding is proved at the critical exponent $p^{*}_{s}=\frac{Np}{N-sp}$, which distinguishes the present setting from the classical Sobolev and fractional Sobolev frameworks. Finally, as an application, we investigate the associated Dirichlet eigenvalue problem and derive existence, uniqueness, and boundedness results for the corresponding solutions.
\end{abstract}

\maketitle
\noindent 
\textbf{Keywords:} Fractional logarithmic $p$-Laplacian, nonlocal operators, fractional Sobolev spaces, continuous and compact embeddings, Dirichlet eigenvalue problem, variational methods.
\\[2mm]
\noindent 
\textbf{MSC:} 35A15, 35J92, 35R11, 46E35, 47G20, 35P30.

\newpage
\setcounter{tocdepth}{1}
\tableofcontents

	\section{Introduction}
\label{sec 1}
The fractional $p$-Laplacian $(-\Delta)^s_p$ arises naturally as the first variation of the fractional Dirichlet energy, also referred to as the Gagliardo seminorm, defined for $N \geq 1$, $0 < s < 1$, and $1 < p < \infty$ by
\begin{equation*}
\Phi_{s,p}(u) = \iint_{\mathbb{R}^{2N}} \frac{|u(x) - u(y)|^p}{|x - y|^{N+sp}} dx dy.
\end{equation*}
More precisely, for suitably regular functions $u$, the fractional $p$-Laplace operator is defined by
\begin{equation}
\label{equ0}
\begin{gathered}
\begin{aligned}
(-\Delta)^s_p u(x) &:= C(N, s, p) \mathrm{P.V.} \int_{\mathbb{R}^N} \frac{\left| u(x) - u(y)\right|  ^{p-2}(u(x) - u(y))}{|x - y|^{N + sp}} dy,\\
& = C(N, s, p) \lim_{\epsilon \to 0} \int_{\{y \in \mathbb{R}^N : |y - x| \geq \epsilon\}} \frac{\left| u(x) - u(y)\right|  ^{p-2}(u(x) - u(y))}{|x - y|^{N + sp}} dy,
\end{aligned}
\end{gathered}
\end{equation}
where $C(N, s, p)$ denotes a normalization constant. It is noteworthy that this operator, along with nonlocal integro-differential operators in general, appears in a wide range of applications, including optimization, finance, phase transitions, thin soft films, and image processing. Furthermore, the fractional $p$-Laplacian constitutes a fundamental model for certain jump L\'evy processes in probability theory, for porous media in physics, and for various other phenomena. For a comprehensive account of the principal properties of such operators, we refer the reader to Bertoin \cite{Bertoin}, Caffarelli and V\'azquez \cite{Caffarelli-Vazquez}, De Pablo et al. \cite{DePablo-Quiros-Rodriguez-Vazquez}, Del Teso et al. \cite{DelTeso-GomezCastro-Vazquez}, Di Nezza et al. \cite{DiNezza-Palatucci-Valdinoci}, Foghem \cite{Foghem}, Majda and Tabak \cite{Majda-Tabak}, V\'azquez \cite{Vazquez} and the references therein. Recently, the fractional $p$-Laplacian has attracted significant attention, primarily due to the intricate interplay between its non-linear and nonlocal features. Considerable attention has been devoted to the study of existence, uniqueness, regularity, and qualitative properties of solutions for a broad class of problems, including elliptic and parabolic boundary-value problems, as well as eigenvalue problems. This research encompasses several fundamental inequalities, such as those of Picone, Hardy, Sobolev, and Poincar\'e, which play a central role in the analysis of these problems. For comprehensive surveys and further references beyond those cited herein, we refer the reader to Bisci et al. \cite{Bisci-Radulescu-Servadei}, Brasco and Franzina \cite{Brasco-Franzina}, Del Pezzo and Quaas \cite{DelPezzo-Quaas}, Frank et al. \cite{Frank-Konig-Tang}, Iannizzotto et al. \cite{Iannizzotto-Liu-Perera-Squassina}, without giving an exhaustive list.  Concerning the normalization constant $C(N, s, p)$, in the particular case $p = 2$, it is given by
\begin{equation}\label{equ1}
C(N, s) = C(N, s, 2) := \frac{s 2^{2s} \Gamma \left(\frac{N + 2s}{2}\right)}{\pi^{\frac{N}{2}} \Gamma(1 - s)},
\end{equation}
where $\Gamma$ denotes the Gamma function. This specific choice ensures that the operator $(-\Delta)^s_2 = (-\Delta)^s$ has the Fourier symbol $|\xi|^{2s}$  for $ \xi \in \mathbb{R}^{N}$ (see Di Nezza et al. \cite[Section 3]{DiNezza-Palatucci-Valdinoci}). For $p \neq 2$, the normalization constant $C(N,s,p)$ is often chosen arbitrarily in much of the literature and is frequently set equal to $2$ for the sake of simplifying computations. In the present work, however, it is essential to employ explicit expressions. Accordingly, we define the normalization constant in such a way that the following limits are satisfied
\begin{equation}\label{equ2}
\lim_{s \to 0^+} (-\Delta)^s_p u(x) = |u(x)|^{p-2} u(x), 
\quad 
\lim_{s \to 1^-} (-\Delta)^s_p u(x) = -\Delta_p u(x), 
\quad x \in \mathbb{R}^N.
\end{equation}
Hence, we define the normalization constant $C(N,s,p)$ as follows (see Dyda et al. \cite[Remark 2.5]{Dyda-Jarohs-Sk})
\begin{equation}
\label{equ4}
C(N,s,p) =
\begin{cases}
\dfrac{s p 2^{2(s-1)} \Gamma \left(\frac{N+sp}{2}\right)}{\pi^{\frac{N-1}{2}}  \Gamma \left(1 - s\right) \Gamma \left(\frac{p+1}{2}\right)}, & \text{if } s > \dfrac{1}{2}, \\[10pt]
\dfrac{s p  2^{2s-1} \Gamma \left(\frac{N+sp}{2}\right)}{\pi^{\frac{N}{2}} \Gamma \left(1 - s\right)}, & \text{if } s \leq \dfrac{1}{2}.
\end{cases}
\end{equation}
This choice of $C(N,s,p)$ preserves several essential properties, including the validity of the limits in \eqref{equ2}, and is consistent with the classical case $p=2$, as in \eqref{equ1}. For further details and additional properties, we refer the reader to Foghem \cite[Section 9.4]{Foghem} (see also Del Teso et al. \cite{DelTeso-GomezCastro-Vazquez}).

On the other hand, for $p = 2$, based on the first limit in \eqref{equ2}, Chen and Weth \cite{Chen-Weth} observed that the function $z \mapsto |z|^{-N}$ is expected to appear in a suitably renormalized limit of the integral kernels $z \mapsto |z|^{-N - sp}$ appearing in \eqref{equ0}, for $s$ sufficiently close to $0$. This observation led the authors to introduce a novel operator, referred to as the logarithmic Laplace operator $L_{\Delta}$, which constitutes the first-order correction in the asymptotic expansion as $s \to 0^{+}$.  More precisely, for every $u \in C^{2}_{c}(\mathbb{R}^{N})$ and $x \in \mathbb{R}^{N}$, one has
\begin{equation}
\label{equ3}
(-\Delta)^{s} u(x) = u(x) + s L_{\Delta} u(x) + O(s) \quad \text{as } s \to 0^{+} \text{ in } L^{q}(\mathbb{R}^{N}), \quad 1 < q \leq \infty.
\end{equation}
Furthermore, it was established in the same paper \cite[Theorem 1.1]{Chen-Weth} that the logarithmic Laplace  operator $L_{\Delta}$ admits the following equivalent representation:
\begin{equation*}
\begin{aligned}
L_{\Delta} u(x) &= \left.\frac{d}{ds}(-\Delta)^{s} u(x)\right|_{s=0} \\
&= C(N) \text{P.V.} \int_{\mathcal{B}_{1}(x)} \frac{u(x) - u(y)}{|x-y|^{N}} dy - C(N) \int_{\mathbb{R}^{N} \setminus \mathcal{B}_{1}(x)} \frac{u(y)}{|x-y|^{N}} dy + \rho(N) u(x),
\end{aligned}
\end{equation*}
where $\mathcal{B}_{1}(x) \subset \mathbb{R}^{N}$ denotes the open Euclidean ball of radius $1$ centered at $x \in \mathbb{R}^{N}$, and
\begin{equation*}
C(N) := \pi^{-N/2} \Gamma \left(N/2\right), \qquad \rho(N) := 2 \ln 2 + \psi \left(N/2\right) - \gamma,
\end{equation*}
where $\gamma := -\Gamma'(1)$ denotes the Euler-Mascheroni constant, and $\psi := \Gamma'/\Gamma$ is the digamma function. In fact, in the same work, the authors developed a comprehensive variational framework for Dirichlet problems posed on bounded domains, thereby providing a rigorous functional setting for the logarithmic Laplace operator $L_{\Delta}$. Within this framework, they investigated the associated eigenvalue problem and derived several fundamental qualitative properties. In particular, they established a Faber-Krahn type inequality, proved appropriate weak and strong maximum principles, and obtained continuity results for weak solutions to the corresponding Poisson problems up to the boundary. For the nonlinear counterpart, Dyda et al. \cite{Dyda-Jarohs-Sk} extend the operator $L_{\Delta}$ to the logarithmic $p$-Laplacian $L_{\Delta_{p}}$, defined by
\begin{equation}
\label{equ16}
\begin{aligned}
&L_{\Delta_{p}} u(x) = \left.\frac{d}{ds}(-\Delta)_{p}^{s} u(x)\right|_{s=0}  = C(N, p) \text{P.V.}\int_{\mathcal{B}_{1}(x)} \frac{|u(x) - u(y)|^{p-2} (u(x) - u(y))}{|x-y|^{N}} dy \\
&+ C(N, p)  \int_{\mathbb{R}^{N}\setminus \mathcal{B}_{1}(x)} \frac{|u(x) - u(y)|^{p-2} (u(x) - u(y)) - |u(x)|^{p-2} u(x)}{|x-y|^{N}} dy   + \rho(N, p) \left| u(x)\right| ^{p-2} u(x),
\end{aligned}
\end{equation}
where
\begin{equation*}
C(N, p) := \frac{p \Gamma \left(N/2\right)}{2 \pi^{N/2}}, \qquad \rho(N, p) := 2 \ln 2 + \frac{p}{2} \psi \left(N/2\right) - \gamma.
\end{equation*}
In analogy with the case $p=2$, the authors developed a rigorous variational framework to investigate Dirichlet problems involving $L_{\Delta_{p}}$ on bounded domains. Within this framework, they elucidated the relationship between the first Dirichlet eigenvalue and corresponding eigenfunction of the fractional $p$-Laplacian and those of the logarithmic $p$-Laplacian. Furthermore, they extended the Faber-Krahn inequality, established comprehensive maximum principles, and derived a boundary Hardy-type inequality in the associated energy space.

The publication of these two pioneering works has sparked significant interest in boundary value problems involving the logarithmic Laplacian, although this area remains at an early stage of development. This growing interest is driven not only by the wide range of applications (see, for instance, Pellacci and Verzini \cite{Pellacci-Verzini}, Antil and Bartels \cite{Antil-Bartels}, as well as Feulefack and Jarohs \cite{Feulefack-Jarohs}, where fractional models with very small order $s$ in the operator \eqref{equ0} arise as an optimal choice in several contexts) but also by significant advances in the analysis of nonlocal phenomena in partial differential equations.  For representative contributions, we refer to Arora et al. \cite{Arora-Giacomoni-Vaishnavi, Arora-Mukherjee} and Chen et al. \cite{Chen-Hauer-Weth} in the case $p = 2$, and to Arora et al. \cite{Arora-Giacomoni-Hajaiej-Vaishnavi, Arora-Hajaiej-Perera} for the more general setting $p > 1$. In this direction, and motivated by the preceding discussion, a natural question arises:
\begin{question}\label{question}
	For $1 < p < \infty$, is it possible to compute the first-order derivative
	\begin{equation}\label{equ33}
	\left. \frac{d}{d t}(-\Delta)_{p}^{t} u(x) \right|_{t=s},
	\end{equation}
	for an arbitrary order $0 < s < 1$ and for a suitable function $u$?
\end{question}
In this context, Chen et al. \cite{Chen-Chen-Hauer} recently established the existence of the first-order derivative and introduced the logarithmic Laplacian, denoted by $(-\Delta)^{s+\log}$ in the linear case (i.e., when $p=2$). This operator arises as the first-order correction term in the expansion \eqref{equ3} for any $0 < s < 1$ and for suitable functions $u$. Precisely, for $u \in C^{2}_{c}(\mathbb{R}^{N})$, the logarithmic Laplacian $(-\Delta)^{s+\log}$ is defined by
\begin{equation*}
\begin{aligned}
(-\Delta)^{s+\log} u(x) &= \left. \frac{d}{d t}(-\Delta)^{t} u(x) \right|_{t=s} \\
&=  B(N,s) (-\Delta)^{s} u(x) +C(N,s) \mathrm{P.V.} \int_{\mathbb{R}^{N}} \frac{u(x) - u(y)}{|x-y|^{N+2s}} \big(-2 \ln|x-y|\big) dy,
\end{aligned}
\end{equation*}
where $C(N,s)$ is given by \eqref{equ1}, and
\begin{equation*}
B(N,s) :=    \frac{d}{dt} \big( \ln C(N,t) \big)\Big|_{t=s}  = 2 \ln 2 + \frac{1}{s} + \psi(1-s) + \psi\left(\frac{N+2s}{2}\right).
\end{equation*}
The authors present several equivalent formulations of the operator $(-\Delta)^{s+\log}$, including its characterization as a Fourier multiplier, its definition via spectral calculus, and a description based on an extension procedure. Furthermore, a comprehensive variational framework is developed to address problems both in $\mathbb{R}^{N}$ and on bounded smooth domains. This framework introduces the natural energy spaces associated with the operator and establishes the corresponding embedding results. In particular, a compact embedding at the critical exponent $ 2^{*}_{s} = \frac{2N}{N - 2s} $ is obtained, a phenomenon that contrasts with the classical Sobolev and fractional Sobolev settings. Within this framework, existence and boundedness results for Poisson-type problems are rigorously derived, along with several related results. The Dirichlet eigenvalue problem is also investigated, and qualitative spectral properties are established.  Motivated by the aforementioned studies, this article aims to extend the operator introduced above to the case $p>1$, specifically to the fractional logarithmic $p$-Laplacian, denoted by $(-\Delta)_{p}^{s+\log}$, and to establish the associated functional framework.
	
	A principal difficulty in the present analysis arises from the differentiation of the nonlocal operator under the integral sign. Introducing the notation $g_p(t) := |t|^{p-2}t$, a formal differentiation of the fractional kernel leads to the following pointwise identity:
	\begin{equation*}
		\partial_t \left( \frac{g_p(u(x)-u(y))}{|x-y|^{N+tp}} \right)\Bigg|_{t=s}
		= -p \frac{g_p(u(x)-u(y))}{|x-y|^{N+sp}} \ln|x-y|.
	\end{equation*}
	This logarithmic factor significantly enhances the singularity in a neighborhood of the origin. In the singular range $1 < p < 2$, the function $g_p$ is only $(p-1)$-H\"older continuous, that is, $ |g_p(a)-g_p(b)| \leq C |a-b|^{p-1}. $ As a consequence, the resulting singular behavior is asymptotically of the form $ |x-y|^{(p-1)-N-sp} |\ln|x-y||, $ which necessitates stronger regularity assumptions, such as $u \in C_c^{1,\alpha}(\mathbb{R}^N)$ with $\alpha > \frac{1-p(1-s)}{p-1}$, in order to ensure local integrability. Moreover, the logarithmic kernel changes sign at $|x-y| = 1$. The associated energy functional admits the following decomposition into positive and negative contributions:
	\begin{equation*}
		\mathcal{J}_{s+\log, p}(u,u)
		= \mathcal{J}_+(u,u) - \mathcal{J}_-(u,u) + \mathcal{J}_{s}(u,u).
	\end{equation*}
	The negative part, up to equivalence, is characterized by
	\begin{equation*}
		\mathcal{J}_-(u,u)
		\sim \iint_{|x-y|>1}
		\frac{|u(x)-u(y)|^p}{|x-y|^{N+sp}}  \ln|x-y|  dx  dy
		> 0.
	\end{equation*}
	Analogously, the positive part is characterized by
	\begin{equation*}
		\mathcal{J}_+(u,u)
		\sim \iint_{|x-y|<1}
		\frac{|u(x)-u(y)|^p}{|x-y|^{N+sp}}  (-\ln|x-y|)  dx  dy
		> 0.
	\end{equation*}
	Furthermore, the standard fractional contribution is given by
	\begin{equation*}
		\mathcal{J}_{s}(u,u)
		\sim \iint_{\mathbb{R}^{2N}}
		\frac{|u(x)-u(y)|^{p}}{|x-y|^{N+sp}}  dx  dy.
	\end{equation*}
This shows that the functional $\mathcal{J}_{s+\log, p}(u,u)$ is not globally nonnegative on $\mathbb{R}^{2N}$, which in turn invalidates the usual coercivity arguments and makes the application of the direct methods in the calculus of variations more delicate. To overcome these difficulties, we develop a robust functional analytic framework based entirely on real-variable methods. In the case $p=2$, Chen et al.~\cite{Chen-Chen-Hauer} relied crucially on the Plancherel theorem and the Fourier symbol representation
\begin{equation*}
	\mathcal{F}\big((-\Delta)^{s+\log}u\big)(\xi)
	= |\xi|^{2s}\ln(|\xi|^2)\hat{u}(\xi).
\end{equation*}
However, in the case $p \neq 2$, the nonlinear structure of the problem precludes the use of Fourier transform techniques. The method developed in this work extends the corresponding linear theory to the quasilinear setting $1 < p < \infty$. Among the main contributions, we establish the following sharp interpolation estimate: for every radius $r \in (0,1)$,
\begin{equation*}
[u]_{s,p}^p \leq \frac{1}{C(N,s,p)(-p\ln r)} \|u\|^{p}_{W_{0}^{s+\log,p}(\Omega)} 
+ \frac{2^{p} \omega_N}{sp} r^{-sp} \|u\|^{p}_{L^{p}(\Omega)},
\end{equation*}
where $C(N,s,p)$ denotes the standard fractional normalization constant defined in \eqref{equ4}, and $\omega_N$ denotes the surface measure of the unit sphere in $\mathbb{R}^N$, given by $ \omega_N = \frac{2\pi^{N/2}}{\Gamma \left(N/2\right)}. $ This logarithmic control yields a strong compactness property at the critical fractional Sobolev exponent 
$p_s^* := \frac{Np}{N-sp}$. In particular, we establish the compact embedding
\begin{equation*}
W_0^{s+\log, p}(\Omega) \hookrightarrow \hookrightarrow L^{p_s^*}(\Omega),
\end{equation*}
where $W_0^{s+\log, p}(\Omega)$ denotes the fractional logarithmic Sobolev space (see Section~\ref{section2} bellow). 
This framework represents a significant departure from the classical fractional Sobolev setting. Indeed, it rules out the formation of Dirac mass concentrations and makes the usual concentration-compactness limit measures unnecessary on bounded domains.  Furthermore, we derive a structural Pohozaev-type identity. The introduction of the logarithmic weight breaks the exact homogeneity  $ \mathbf{K}_{s,p}(\lambda z)=\lambda^{-(N+sp)}\mathbf{K}_{s,p}(z) $ which is satisfied by the classical fractional kernel. In this modified framework, the corresponding commutator identity is given by
\[
z \cdot \nabla \mathbf{K}_p^{s+\log}(|z|) + (N+sp)\mathbf{K}_p^{s+\log}(|z|)
= -p C(N,s,p) |z|^{-N-sp},
\]
where $\mathbf{K}_p^{s+\log}$ is defined in \eqref{equ14} (see Section~\ref{section2}). This formulation induces a strictly positive scale-breaking defect measure defined by
\begin{equation*}
\Gamma_{s,p}(u)
:= \frac{C(N,s,p)}{2}
\iint_{|x-y|<1}
\frac{|u(x)-u(y)|^p}{|x-y|^{N+sp}} dx dy
> 0.
\end{equation*}
The introduction of this defect measure leads to a modified Pohozaev identity of the form
\[
\frac{N-sp}{2p} \iint_{\mathbb{R}^{2N}} |u(x)-u(y)|^p \mathbf{K}_p^{s+\log}(|x-y|) \, dx \, dy - \frac{N}{q}\|u\|_{L^q(\Omega)}^q
= \Gamma_{s,p}(u) - \mathcal{B}_{s+\log, p}(u),
\]
which shows that the classical non-existence arguments for critical solutions in star-shaped domains are no longer applicable in the present framework. Under the geometric assumption $\mathrm{diam}(\Omega) < e^{-1/sp}$, the associated kernel remains strictly positive on $\Omega \times \Omega$.This property allows for a global integration of the identity and ensures both the positivity and simplicity of the first Dirichlet eigenvalue $\lambda_{1,p}^{s+\log}$. We consider the associated nonlinear Dirichlet eigenvalue problem
\begin{equation*}
(-\Delta)_p^{s+\log} u = \lambda |u|^{p-2}u \quad \text{in } \Omega, \qquad 
u = 0 \quad \text{in } \mathbb{R}^N \setminus \Omega.
\end{equation*}
The first eigenvalue is defined variationally as
\[
\lambda_{1,p}^{s+\log} := \inf \left\{ \mathcal{J}_{s+\log,p}(u,u) : \|u\|_{L^p(\Omega)} = 1 \right\}.
\]
Owing to the sign-changing character of the kernel $\mathbf{K}_p^{s+\log}(|z|)$ for $|z| > e^{-1/sp}$, the standard comparison principle
$\mathcal{J}_{s+\log,p}(|u|,|u|) \leq \mathcal{J}_{s+\log,p}(u,u)$ as well as the classical Picone identity are no longer applicable. This difficulty is overcome by imposing the geometric condition $\mathrm{diam}(\Omega) < e^{-1/sp}$, which ensures that $\mathbf{K}_p^{s+\log}(|x-y|) > 0$ holds strictly on $\Omega \times \Omega$. On the interaction region $\Omega \times \Omega^c$, the negative contribution of the kernel is exactly balanced by the boundary condition, in the sense that
\[
|u(x)-0|^p - \big||u(x)|-0\big|^p = 0.
\]
Combined with the fractional Picone-type inequality $P_\varepsilon(x,y) \leq |\phi(x)-\phi(y)|^p$, this cancellation mechanism allows us to prove that $\lambda_{1,p}^{s+\log} > 0$ is simple, and that the corresponding eigenfunction $\phi_{1,p}^{s+\log}$ is strictly positive and globally bounded, satisfying
\[
\|\phi_{1,p}^{s+\log}\|_{L^\infty(\Omega)} \leq C.
\]
To complete the analysis of these eigenfunctions, we establish a strong minimum principle for weak super-solutions satisfying $(-\Delta)_p^{s+\log} u \ge 0$. This result extends the weak Harnack estimates of Di Castro et al. \cite{DiCastro-Kuusi-Palatucci} by employing the test function $\eta = (u+\delta)^{1-p}\phi^p$ in the weak formulation. The main argument relies on a precise control of the logarithmic term through an explicit computation of the associated singular integrals. In particular, we show that the logarithmic singularity exactly recovers the expected fractional scaling
\[
r^{-p} \cdot r^{p(1-s)} \cdot r^N = r^{N-sp}.
\]
More specifically, we derive the following quantitative estimate:
\begin{equation*}
\begin{aligned}
& \int_{\mathcal{B}_{2r}(x_{0})} \int_{\mathcal{B}_{2r}(x_{0})} 
\mathbf{K}^{s+\log}_{p}(|x - y|) 
\left|
\ln \left(\frac{\delta + u(x)}{\delta + u(y)}\right)
\right|^{p}
 dx dy \\[4pt]
& \leq C r^{N-sp} \Bigg\{
C(N,s,p) \omega_N^2 2^{2N}
\left[
\frac{B(N,s,p)-p\ln r}{sp}
-\frac{1}{s^2 p}
\right] \\[4pt]
&\qquad
+ \omega_{N} 2^{N}
\int_{\mathbb{R}^{N} \setminus \mathcal{B}_{R}(x_{0})}
\Big(
\mathbf{K}^{s+\log}_{p}\big( \tfrac{1}{2}|y-x_{0}| \big)
\Big)_{+}
\big(u(y)\big)_{-}^{ p-1} dy \\[4pt]
&\qquad
+ c C(N,s,p) 
\frac{\omega_N^2}{Np(1-s)} 
2^{N+2p(1-s)}
\left[
B(N,s,p)-p\ln(4r)+\frac{1}{1-s}
\right]
\Bigg\},
\end{aligned}
\end{equation*}
where $B(N,s,p)$ is defined in \eqref{equB} below. As a consequence, every non-negative and non-trivial weak supersolution satisfies
\[
\operatorname*{ess} \inf_{K} u > 0
\qquad \text{for every compact set } K \Subset \Omega.
\]
This, in turn, yields the strong minimum principle.

The paper is organized as follows. In Section~\ref{section2}, we establish the existence of the derivative of $(-\Delta)_{p}^{t}$ with respect to its order at an arbitrary point $s \in (0,1)$. We then introduce the fractional logarithmic $p$-Laplacian operator, investigate its main properties, and develop the associated functional framework. In addition, we study the corresponding energy forms and their fundamental properties. In Section~\ref{section3}, we derive several key inequalities, including a regional Poincar\'e-type inequality, a boundary Hardy-type inequality, a D\'{\i}az-Saa inequality, and a Gagliardo-Nirenberg inequality. We also provide an elementary proof of a Sobolev-type inequality, together with a Pohozaev identity associated with the fractional logarithmic $p$-Laplacian. Section~\ref{section4} is devoted to proving continuous and compact embedding results. We also establish suitable density results within the corresponding functional spaces. Finally, in Section~\ref{section5}, we study the Dirichlet eigenvalue problem, with particular emphasis on the qualitative properties of the associated eigenvalues.
 
\section{The fractional logarithmic $p$-Laplacian and its associated functional setting}\label{section2}

In this section, we introduce the fractional logarithmic Sobolev space $W^{s+\log, p}(\mathbb{R}^{N})$, which serves as the natural functional setting for the weak formulation of the problem under study. Since the structure of this space is intrinsically linked to the fractional logarithmic $p$-Laplacian, we first establish the existence of the derivative appearing in \eqref{equ33} in the more general case $p>1$. We also recall the precise definition of the associated operator and characterize its kernel. We now state the following proposition:

\begin{proposition}
	\label{prop1}
	Let $1 < p < \infty$, $0 < s < 1$, and $u \in C^{2}_{c}(\mathbb{R}^{N})$. Then, for each fixed $x \in \mathbb{R}^{N}$, the mapping  $ t \longmapsto (-\Delta)^{t}_{p} u(x) $ is of class $C^{1}$ on the interval $(0,1)$. Furthermore,  the derivative with respect to $t$ at $t = s$ is given by
	\begin{equation*}
	\begin{aligned}
	\left. \frac{d}{d t}(-\Delta)_{p}^{t} u(x) \right|_{t=s} &= B(N, s, p) (-\Delta)^{s}_{p} u(x) \\
	&\quad - p C(N,s,p) \mathrm{P.V.} \int_{\mathbb{R}^N} \dfrac{|u(x) - u(y)|^{p-2} (u(x) - u(y)) \ln|x - y|}{|x - y|^{N + sp}} dy,
	\end{aligned}
	\end{equation*}
	where $C(N,s,p)$ is the standard fractional normalization constant given in \eqref{equ4}, and
	\begin{equation}\label{equB}
	B(N, s, p) = \frac{d}{dt} \big( \ln C(N,t,p) \big)\Big|_{t=s} = 2\ln 2 + \frac{1}{s}  + \frac{p}{2} \psi \left(\frac{N+sp}{2}\right) + \psi(1-s),
	\end{equation}
	with $\psi$ denoting the digamma function.
\end{proposition}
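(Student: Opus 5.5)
The plan is to split off the constant and differentiate a fixed integral. For $t\in(0,1)$ write
\[
(-\Delta)^t_p u(x)=C(N,t,p)\,I(t),\qquad I(t):=\mathrm{P.V.}\int_{\mathbb{R}^N}\frac{g_p(u(x)-u(y))}{|x-y|^{N+tp}}\,dy,\qquad g_p(\tau)=|\tau|^{p-2}\tau .
\]
The claim then follows from the product rule once two things are known: $t\mapsto C(N,t,p)$ is $C^1$ and positive, and $t\mapsto I(t)$ is $C^1$ with
\[
I'(s)=-p\,\mathrm{P.V.}\int_{\mathbb{R}^N}\frac{g_p(u(x)-u(y))\ln|x-y|}{|x-y|^{N+sp}}\,dy .
\]
Indeed $C'(N,s,p)\,I(s)=B(N,s,p)\,C(N,s,p)\,I(s)=B(N,s,p)(-\Delta)^s_pu(x)$ with $B=\frac{d}{dt}\ln C(N,t,p)|_{t=s}$.

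For the constant, I would take logarithms in \eqref{equ4}. On the branch $s\le \frac12$ this gives
\[
\ln C=\ln t+\ln p+(2t-1)\ln 2+\ln\Gamma\!\left(\tfrac{N+tp}{2}\right)-\tfrac N2\ln\pi-\ln\Gamma(1-t).
\]
Differentiating yields $\frac1t+2\ln2+\frac p2\psi(\frac{N+tp}{2})+\psi(1-t)$, which is \eqref{equB}. The branch $s>\frac12$ differs only by $t$-independent factors, so it has the same logarithmic derivative. Hence $B$ is given by one formula throughout, and $C(N,\cdot,p)$ is $C^1$ on each open branch. At $t=\frac12$ the two branch formulas do not obviously agree, so I would treat that point through the one-sided derivatives, or simply argue on each branch separately.

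For $I(t)$, the first step is to remove the principal value by symmetrization:
\[
I(t)=\frac12\int_{\mathbb{R}^N}\frac{g_p(u(x)-u(x+z))+g_p(u(x)-u(x-z))}{|z|^{N+tp}}\,dz .
\]
Next, fix $[a,b]\ni s$ inside $(0,1)$ and apply dominated convergence to both the $t$-derivative $-p\ln|z|\,(\cdots)|z|^{-N-tp}$ and its continuity in $t$.

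At infinity the numerator is bounded by $2(2\|u\|_\infty)^{p-1}$. The majorant $|z|^{-N-ap}(1+\ln|z|)$ is then integrable on $\{|z|>1\}$.

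Near $z=0$, put $A=u(x)-u(x+z)=-\nabla u(x)\cdot z+O(|z|^2)$ and $B'=u(x)-u(x-z)=\nabla u(x)\cdot z+O(|z|^2)$, so that $A+B'=O(|z|^2)$.
\begin{itemize}
\item For $p\ge2$: the local Lipschitz bound $|g_p(A)-g_p(-B')|\le C(|A|+|B'|)^{p-2}|A+B'|$ gives $O(|z|^p)$. The majorant $|z|^{p-N-bp}|\ln|z||$ is integrable near $0$ because $b<1$.
\item For $1<p<2$ with $\nabla u(x)\neq0$: the mean value theorem gives $|g_p(A)+g_p(B')|\lesssim |\xi|^{p-2}|z|^2$ with $|\xi|\gtrsim|\nabla u(x)\cdot z|$ outside a thin cone. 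This yields a majorant $|z|^{p}|\nabla u(x)\cdot\frac{z}{|z|}|^{p-2}$, whose angular factor is integrable on the sphere since $p-2>-1$. In the thin cone I would instead use the Hölder bound $|A+B'|^{p-1}$.
\item For $1<p<2$ with $\nabla u(x)=0$: both $A$ and $B'$ are $O(|z|^2)$, so the bound is $|z|^{2(p-1)}$.
\end{itemize}

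I expect the main obstacle to be this last case. The majorant $|z|^{2(p-1)-N-tp}|\ln|z||$ is integrable only when $2(p-1)>tp$, that is $t<2-\frac2p$. This matches the remark in the Introduction that for $1<p<2$ one needs extra regularity or restrictions to ensure local integrability. I would therefore first try to settle it by a finer second-order expansion at critical points. If that fails, I would make explicit that at such points, and for $p$ near $1$, the statement holds on the range of $t$ where the principal value defining $(-\Delta)^t_pu(x)$ converges absolutely near the diagonal.

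Once a uniform majorant on $[a,b]$ is available, differentiation under the integral sign gives the formula for $I'(s)$. The same majorant shows that $I'$ is continuous in $t$. Undoing the symmetrization returns the stated principal-value form.
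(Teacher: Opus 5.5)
Your plan follows the paper's proof: factor $(-\Delta)^t_pu(x)=C(N,t,p)I(t)$, obtain $B$ as the logarithmic derivative of $C$ on each branch, symmetrize, and differentiate under the integral with a $p\ge 2$ versus $1<p<2$ split near the diagonal. It is more careful than the paper. Your uniform majorants on $[a,b]$ justify both the differentiation and the continuity of $I'$, and your cone argument covers non-critical points for $1<p<2$ and all $t\in(0,1)$.

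The case you flagged is a real obstruction, and no finer second-order expansion will close it, because the statement is false there. Take $u(x)=(1-|x|^2)_+^3\in C^2_c(\mathbb{R}^N)$ and $x=0$. Since $(1-a)^3\le 1-a$ on $[0,1]$, you get $u(0)-u(y)\ge |y|^2$ for $|y|\le 1$ and $u(0)-u(y)=1$ for $|y|\ge 1$. The integrand is therefore nonnegative everywhere and bounded below by $|y|^{2(p-1)-N-tp}$ on $\mathcal{B}_1(0)$. Hence $(-\Delta)^t_pu(0)=+\infty$ for every $t\in[2-\frac2p,1)$, which is a nonempty interval for each $p\in(1,2)$. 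At a nondegenerate extremum your one-sided differences $A$ and $B'$ have the same sign and are both of size $|z|^2$, so there is no cancellation left to exploit.

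The paper's proof hides exactly this point. It asks for $u\in C^{1,\alpha}$ with $\alpha>\frac{1-p(1-t)}{p-1}$, but $u\in C^2_c$ only provides $\alpha\le 1$, i.e.\ $t<2-\frac2p$. Because it uses only the Hölder bound $|A+B'|^{p-1}$, it does not even cover non-critical points beyond that range, which your cone argument does. So drop the finer-expansion attempt and make your fallback the result:
\begin{itemize}
\item for $p\ge 2$ the derivative formula holds at every $x$;
\item for $1<p<2$ it holds for all $t$ at points with $\nabla u(x)\neq 0$;
\item at critical points it holds only for $t<2-\frac2p$, equivalently $p>\frac{2}{2-t}$.
\end{itemize}

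Likewise, resolve the point $t=\frac12$ instead of leaving it open. The two branches of $C(N,t,p)$ differ by the $t$-independent factor $\frac{\sqrt{\pi}}{2\Gamma(\frac{p+1}{2})}$. This factor equals $1$ for $p=2$ but not in general; for instance it is strictly less than $1$ for every $p>2$, since $\Gamma$ is increasing beyond $\frac32$. Hence $C(N,\cdot,p)$ jumps at $\frac12$, and $t\mapsto(-\Delta)^t_pu(x)$ is discontinuous there whenever $I(\frac12)\neq 0$. The bump above at $x=0$ with $p>2$ is an example.

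The correct conclusion is $C^1$ on $(0,\frac12]$ and on $(\frac12,1)$ separately. The single formula for $B$ is valid as a derivative at every $s\neq\frac12$, and as a left derivative at $s=\frac12$. Your suggestion to argue branch by branch is the right fix, but you should state that the claimed $C^1$ regularity on all of $(0,1)$ actually fails there. The paper computes $B$ on each branch and never addresses the junction.
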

\begin{proof}
First, we note that the operator $(-\Delta)^{t}_{p}$ is well-defined for every $t \in (0,1)$ when $u \in C^{2}_{c}(\mathbb{R}^{N})$. Indeed, for a fixed $t \in (0,1)$, by setting $z = y - x$, the operator can equivalently be expressed as
	\begin{equation*}
	\begin{aligned}
	&(-\Delta)^t_p u(x):= C(N, t, p)  \mathrm{P.V.} \int_{\mathbb{R}^N} \frac{\left| u(x) - u(x+z)\right|^{p-2} \left( u(x) - u(x+z)\right)}{|z|^{N + tp}} dz \\[4pt]
	&= \frac{C(N, t, p)}{2} \mathrm{P.V.} \int_{\mathbb{R}^N} \frac{\left| u(x) - u(x+z)\right|^{p-2} \left( u(x) - u(x+z)\right) + \left| u(x) - u(x-z)\right|^{p-2} \left( u(x) - u(x-z)\right)}{|z|^{N + tp}}  dz.
	\end{aligned}
	\end{equation*}
	\textbf{Case 1: $1 < p < 2$.} Under the regularity assumption on $u$, we have $u \in C^{1, \alpha}_c(\mathbb{R}^N)$ for some $\alpha > 0$, which will be specified later. Moreover, noting that the mapping $\tau \mapsto \left| \tau\right| ^{p-1}$ is concave, an application of the Lagrange Mean Value Theorem yields
	
	\begin{equation*}
	\begin{aligned}
	&\frac{\Big|\ |u(x) - u(x+z)|^{p-2} \big(u(x) - u(x+z)\big) + |u(x) - u(x-z)|^{p-2} \big(u(x) - u(x-z)\big) \Big|}{|z|^{N + tp}} \\[4pt]
	&\qquad\leq 2^{2-p} \frac{\big|\big(u(x) - u(x-z)\big) + \big(u(x) - u(x+z)\big) \big|^{p-1}}{|z|^{N + tp}} \\[4pt]
	&\qquad= 2^{2-p} \frac{\big|\big(Du(x - \zeta_1 z) - Du(x + \zeta_2 z)\big) \cdot z \big|^{p-1}}{|z|^{N + tp}} \\[4pt]
	&\qquad\leq C |z|^{(\alpha+1)(p-1) - N - tp},
	\end{aligned}
	\end{equation*}
	for some $\zeta_1, \zeta_2 \in [0,1]$, and $C = C(p, \alpha, \zeta_1, \zeta_2) > 0$ is a constant. By choosing $\alpha > \frac{1 - p(1-t)}{p-1}$, the right-hand side becomes integrable near $0$ for any $t \in (0,1)$ (see \cite[Proposition 2.12]{Iannizzotto-Mosconi-Squassina}).\\[4pt]
	\textbf{Case 2: $p \geq 2$.} By also using the Lagrange Mean Value Theorem together with the classical elliptic estimate
	\begin{equation*}
	\bigl| |a|^{ p-2} a - |b|^{ p-2} b \bigr| \leq C(p) \bigl(|a| + |b|\bigr)^{ p-2} |a-b|, \quad \text{for all } a,b \in \mathbb{R}^N \text{ with } |a| + |b| \neq 0,
	\end{equation*}
	and taking into account that $u \in C^2_c(\mathbb{R}^N)$, we obtain
	\begin{equation*}
	\begin{aligned}
	&\frac{\Bigl| |u(x) - u(x+z)|^{p-2} \bigl(u(x) - u(x+z)\bigr) - |u(x-z) - u(x)|^{p-2} \bigl(u(x-z) - u(x)\bigr) \Bigr|}{|z|^{N + tp}} \\[4pt]
	&\leq C(p) \frac{\Bigl(|u(x) - u(x+z)| + |u(x-z) - u(x)|\Bigr)^{p-2} \Bigl| (u(x) - u(x+z)) - (u(x-z) - u(x)) \Bigr|}{|z|^{N + tp}} \\[4pt]
	&= C(p) \frac{|z|^{p-2} \Bigl(|Du(x + \zeta_1 z)| + |Du(x - \zeta_2 z)|\Bigr)^{p-2} \Bigl| (Du(x - \zeta_1 z) - Du(x + \zeta_2 z)) \cdot z \Bigr|}{|z|^{N + tp}} \\[4pt]
	&\leq C \sup_{\mathrm{supp}(u)} |Du|^{p-2} \sup_{\mathrm{supp}(u)} |D^2 u| |z|^{p(1-t) - N},
	\end{aligned}
	\end{equation*}
	for some $\zeta_1, \zeta_2 \in [0,1]$, and constant $C = C(p, \zeta_1, \zeta_2) > 0$. It is straightforward to see that this expression is integrable near $0$ for every $t \in (0,1)$.
	\noindent
Second, we aim to establish the existence of the first-order derivative of the fractional $p$-Laplace operator. To this end, for a fixed $x \in \mathbb{R}^{N}$, we obtain that
	
	\begin{equation}
	\label{equ5}
	\begin{aligned}
	&\left. \frac{d}{dt}  (-\Delta)^{t}_{p} u(x) \right|_{t=s} = \frac{\left. \frac{d}{dt} C(N,t,p) \right|_{t=s}}{C(N,s,p)} (-\Delta)^{s}_{p} u(x) \\[4pt]
	&\quad + C(N,s,p) \left. \frac{d}{dt} \left( \mathrm{P.V.} \int_{\mathbb{R}^N} \frac{\left| u(x) - u(y)\right| ^{p-2}(u(x) - u(y))}{|x - y|^{N + tp}} dy \right) \right|_{t=s} \\[4pt]
	&= \underbrace{ \frac{d}{dt} \big( \ln C(N,t,p) \big)\Big|_{t=s} }_{\textbf{I}_{1}} (-\Delta)^{s}_{p} u(x) \\[4pt]
	&\quad + C(N,s,p) \underbrace{ \mathrm{P.V.} \int_{\mathbb{R}^N} \left|  u(x) - u(y)\right| ^{p-2}(u(x) - u(y)) \left. \frac{d}{dt} \big( |x - y|^{- N - tp} \big) \right|_{t=s}  dy}_{\textbf{I}_{2}}.
	\end{aligned}
	\end{equation}
	We first calculate the term $\textbf{I}_{1}$. Taking into account the normalizing constant $C(N,t,p)$ introduced in \eqref{equ4}, we distinguish between the following two cases:\\[4pt]
	\textbf{Case 1:} $2s > 1$. In this case, we have
	\begin{equation*}
	C(N,t,p) = \dfrac{t p 2^{2 (t-1)}\Gamma \left(\frac{N+tp}{2}\right)}{\pi^{\frac{N-1}{2}} \Gamma \left(1 - t\right) \Gamma \left(\frac{p+1}{2}\right)}.
	\end{equation*}
	Taking logarithms gives
	\begin{equation*}
	\begin{aligned}
	\ln C(N, t, p) &= \ln t + \ln p + 2(t-1)\ln 2 + \ln \Gamma \left(\frac{N+tp}{2}\right) - \frac{N-1}{2}\ln \pi - \ln \Gamma \left(1 - t\right) - \ln \Gamma \left(\frac{p+1}{2}\right).
	\end{aligned}
	\end{equation*}
	Differentiating with respect to $t$ and using the definition of the digamma function $\psi$, we obtain
	\begin{equation*}
	\begin{aligned}
	B(N, s, p)  : = \textbf{I}_{1}  = \frac{d}{dt} \big( \ln C(N,t,p) \big)\Big|_{t=s} &= \dfrac{1}{s} + 2 \ln 2 +\frac{p}{2} \psi\Big(\frac{N+sp}{2}\Big) + \psi\Big(1 - s\Big).
	\end{aligned}
	\end{equation*}
	\noindent \textbf{Case 2:} $2s \leq 1$. Here, we have
	\begin{equation*}
	C(N,t,p) = \dfrac{t p 2^{2t-1}\Gamma \left(\frac{N+tp}{2}\right)}{\pi^{\frac{N}{2}} \Gamma \left(1 - t\right)}. 
	\end{equation*}
	Similarly, differentiating gives
	\begin{equation*}
	B(N, s, p) = \frac{d}{dt} \big( \ln C(N,t,p) \big)\Big|_{t=s}  = \frac{1}{s} + 2\ln 2 + \frac{p}{2} \psi \left(\frac{N+sp}{2}\right) + \psi(1-s).
	\end{equation*}
	Next, we calculate $\textbf{I}_{2}$. It is easy to see that
	\begin{equation*}
	\textbf{I}_{2} = \mathrm{P.V.} \int_{\mathbb{R}^N} \frac{\left| u(x) - u(y)\right| ^{p-2}(u(x) - u(y))}{|x-y|^{N+sp}} \big(-p \ln |x-y|\big) dy.
	\end{equation*}
	We emphasize that this integral is finite and that $\textbf{I}_{2} \in C(0,1)$. Indeed, since $u$ has compact support, for sufficiently large $|y|$ the integrand behaves like  $ \left| \ln |x-y| \right| |x-y|^{-N-sp}, $ which is integrable at infinity for all $t \in (0,1)$.  On the other hand, in a neighborhood of $x=y$, by exploiting the regularity properties of $u$ together with the Lagrange Mean Value Theorem, the integrand behaves like $ \left| \ln |x-y| \right| |x-y|^{(\alpha+1)(p-1)-N-tp} $ for $\alpha > \frac{1 - p(1-t)}{p-1}$ when $1 < p < 2$, and like $ \left| \ln |x-y| \right| |x-y|^{p(1-t)-N} $ when $p \geq 2$. In both cases, the integral remains integrable near $x=y$ for any $t \in (0,1)$. Hence, combining these observations and returning to \eqref{equ5}, we obtain that
	\begin{equation*}
	\begin{aligned}
	\left. \frac{d}{dt} \big( (-\Delta)^{t}_{p} u(x) \big) \right|_{t=s}
	&= B(N, s, p) (-\Delta)^{s}_{p} u(x) \\
	&\quad - p C(N,s,p) \mathrm{P.V.} \int_{\mathbb{R}^N} \frac{\left| u(x) - u(y)\right| ^{p-2}(u(x) - u(y)) \ln|x-y|}{|x-y|^{N+sp}} dy.
	\end{aligned}
	\end{equation*}
	This completes the proof.
\end{proof}

Based on Proposition \ref{prop1}, we introduce our main operator, called the fractional logarithmic $p$-Laplacian. It is defined on $C^{2}_{c}(\mathbb{R}^{N})$ with values in $\mathbb{R}$ and is explicitly obtained as the derivative with respect to $s \in (0,1)$, given by:
\begin{equation*}
(-\Delta)_{p}^{s+\log} u(x):= \left. \frac{d}{d t}(-\Delta)_{p}^{t} u(x) \right|_{t=s}, \quad x \in \mathbb{R}^{N}.
\end{equation*}
Furthermore, this operator admits the following equivalent representations
\begin{itemize}
	\item[$(i)$] Integral representation: For every $u \in C^{2}_{c}(\mathbb{R}^{N})$ and $x \in \mathbb{R}^{N}$,
	\begin{equation*}
	\begin{aligned}
	(-\Delta)_{p}^{s+\log} u(x)
	&= B(N, s, p) (-\Delta)_{p}^{s} u(x) \\[4pt]
	&\quad - p C(N, s, p) \mathrm{P.V.} \int_{\mathbb{R}^{N}} \dfrac{|u(x) - u(y)|^{p-2} (u(x) - u(y)) \ln |x - y|}{|x - y|^{N + sp}} dy.
	\end{aligned}
	\end{equation*}
	\item[$(ii)$] Kernel representation: For every $u \in C^{2}_{c}(\mathbb{R}^{N})$ and $x \in \mathbb{R}^{N}$,
	\begin{equation*}
	(-\Delta)_{p}^{s+\log} u(x) = \mathrm{P.V.} \int_{\mathbb{R}^{N}} |u(x) - u(y)|^{p-2} (u(x) - u(y)) \mathbf{K}^{s+ \log}_{p}(|x - y|) dy,
	\end{equation*}
	where the kernel $\mathbf{K}^{s+ \log}_{p}$ is explicitly given by
	\begin{equation}
	\label{equ14}
	\mathbf{K}^{s+ \log}_{p}(r) := \frac{B(N, s, p) C(N, s, p) - p C(N, s, p) \ln r}{r^{N+sp}}, \quad r > 0.
	\end{equation}
\end{itemize}\begin{remark}
	By the definitions of $C(N, s, p)$ and $B(N, s, p)$, we observe the following properties:
	\begin{enumerate}
		\item In general, the kernel $\mathbf{K}^{s+\log}_{p}$ is not guaranteed to be nonnegative.
		\item The constant $B(N, s, p)$ exhibits the asymptotic behavior
		\begin{equation*}
		\lim_{s \to 0^+} B(N, s, p) = +\infty, \quad \text{and} \quad \lim_{s \to 1^-} B(N, s, p) = -\infty.
		\end{equation*}
		Moreover, $B(N, s, p)$ is strictly decreasing as a function of $s$. In particular, since $B(N, 1/2, p) > 0$, there exists a threshold $s_0 \in (1/2, 1)$ such that
		\begin{equation*}
		B(N, s, p) \geq 0 \quad \text{for all } s \in (0, s_0).
		\end{equation*}
	\end{enumerate}
\end{remark}

We next show that the operator $(-\Delta)_{p}^{s+\log}$ enjoys the following properties.

\begin{theorem}
	\label{theorem1}
	Let $1 < p < \infty$ and $0 < s < 1$. Then, for every $u \in C^{2}_{c}(\mathbb{R}^{N})$, one has
	\begin{equation*}
	(-\Delta)_{p}^{s+\log} u \;\longrightarrow\; L_{\Delta_{p}} u \quad \text{in } L^{\infty}(\mathbb{R}^{N}) \quad \text{as } s \to 0^{+},
	\end{equation*}
	where $L_{\Delta_{p}}$ denotes the logarithmic $p$-Laplace operator corresponding to $s=0$, defined in \eqref{equ16}.   Furthermore, we have $ (-\Delta)^{s+\log}_p u \in L^{q}(\mathbb{R}^{N})  $ for all $ 1 < q \leq \infty, $	and the following convergence property holds:
		\[
		\lim_{t \to s}
		\left\|
		\frac{(-\Delta)^t_p u - (-\Delta)^s_p u}{t - s}
		- (-\Delta)^{s+\log}_p u
		\right\|_{L^{q}(\mathbb{R}^{N})}
		= 0.
		\]
		Equivalently, we have the asymptotic expansion
\begin{equation*}
(-\Delta)_{p}^{t} u(x)
= (-\Delta)_{p}^{s} u(x)
+ (t - s) (-\Delta)^{s+\log}_p u(x)
+ O(|t - s|)
\quad \text{as } t \to s \text{ in } L^{q}(\mathbb{R}^{N}), \quad 1 < q \leq \infty.
\end{equation*}
	
\end{theorem}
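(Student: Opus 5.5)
The plan is to work throughout with the symmetrized representation from the proof of Proposition \ref{prop1}. Fix $u\in C^2_c(\mathbb{R}^N)$ with $\mathrm{supp}\,u\subset \mathcal{B}_R(0)$ and write
\[
\delta_z u(x):=g_p(u(x)-u(x+z))+g_p(u(x)-u(x-z)),\qquad
(-\Delta)^t_p u(x)=\tfrac{C(N,t,p)}{2}\int_{\mathbb{R}^N}\delta_z u(x)|z|^{-N-tp}\,dz .
\]
The pointwise bounds from the proof of Proposition \ref{prop1} give $|\delta_z u(x)|\le M|z|^{\beta}$ for $|z|\le1$, uniformly in $x$. Here $\beta=(1+\alpha)(p-1)$ when $1<p<2$, and $\beta=p$ when $p\ge2$. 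In both cases we can take $\beta>sp$, and indeed $\beta>tp$ for $t$ in a compact neighbourhood $[s-\varepsilon,s+\varepsilon]$ of $s$. For $|z|\ge1$ we use $|\delta_z u(x)|\le 2^{p}\|u\|_\infty^{p-1}$.

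\textbf{Difference quotient.} Set $\kappa_t(z):=C(N,t,p)|z|^{-N-tp}$, so that $(-\Delta)^t_pu(x)=\frac12\int\delta_zu(x)\kappa_t(z)\,dz$. The map $t\mapsto \kappa_t(z)$ is smooth, with $\partial_t\kappa_t=\kappa_t(B(N,t,p)-p\ln|z|)$; in particular $\partial_t\kappa_t|_{t=s}=\mathbf{K}^{s+\log}_p(|z|)$ is the kernel \eqref{equ14}. By Taylor's formula, for $|t-s|<\varepsilon$,
\[
\Big|\frac{\kappa_t(z)-\kappa_s(z)}{t-s}-\partial_t\kappa_s(z)\Big|\le |t-s|\sup_{|\tau-s|\le\varepsilon}|\partial^2_\tau\kappa_\tau(z)|\le C|t-s|\,(1+\ln^2|z|)\,|z|^{-N}\max\big(|z|^{-(s+\varepsilon)p},|z|^{-(s-\varepsilon)p}\big).
\]
Here we use that $C(N,\cdot,p)$ and $B(N,\cdot,p)$ are smooth on $(0,1)$; at $s=1/2$ one uses the branch of \eqref{equ4} valid near $s$ and notes that both branches give the same $B$. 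Multiplying by $|\delta_zu(x)|$ and integrating, with $\varepsilon$ small so that $\beta-(s+\varepsilon)p>0$ and $(s-\varepsilon)p>0$, the near part and the far part are both finite. This gives the $L^\infty$ bound
\[
\Big\|\frac{(-\Delta)^t_pu-(-\Delta)^s_pu}{t-s}-(-\Delta)^{s+\log}_pu\Big\|_\infty\le C|t-s|.
\]

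\textbf{$L^q$ bounds for $q<\infty$.} The only difficulty is decay in $x$. For $|x|\ge 2R$ we have $u(x)=0$, and $u(x\pm z)\neq0$ forces $|z|\ge|x|-R\ge|x|/2$. The Taylor error bound then gives
\[
\text{error}(x)\le C|t-s|\int_{\mathcal{B}_R(0)}|u(y)|^{p-1}(1+\ln^2|x-y|)|x-y|^{-N-(s-\varepsilon)p}\,dy\le C|t-s|(1+\ln^2|x|)|x|^{-N-(s-\varepsilon)p}.
\]
This function lies in $L^q(\{|x|>2R\})$ for every $q\ge1$, and on $\mathcal{B}_{2R}$ we use the $L^\infty$ bound. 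The same argument applied to $\mathbf{K}^{s+\log}_p$ gives $(-\Delta)^{s+\log}_pu\in L^q$, because $|(-\Delta)^{s+\log}_pu(x)|\lesssim(1+|\ln|x||)|x|^{-N-sp}$ for large $|x|$. This proves the convergence and the expansion for all $1<q\le\infty$.

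\textbf{Limit $s\to0^+$.} This is the main obstacle, because $B(N,s,p)\sim 1/s$ blows up while the near-diagonal and far-field integrals compensate. The approach is to split the integral at $|z|=1$ and subtract $|u(x)|^{p-2}u(x)$ in the far part, as in \eqref{equ16}. One writes
\[
\int_{|z|\ge1}\frac{g_p(u(x)-u(x+z))}{|z|^{N+sp}}\,dz=\int_{|z|\ge1}\frac{g_p(u(x)-u(x+z))-g_p(u(x))}{|z|^{N+sp}}\,dz+g_p(u(x))\frac{\omega_N}{sp},
\]
and likewise with the logarithmic weight, using $\int_{|z|\ge1}|z|^{-N-sp}\ln|z|\,dz=\omega_N/(sp)^2$. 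Collecting the singular coefficients of $g_p(u(x))$,
\[
C(N,s,p)\frac{\omega_N}{sp}\Big(B(N,s,p)-\frac1s\Big),
\]
one expands $C(N,s,p)=s\,C_1+O(s^2)$ with $C_1\omega_N/p=1$, consistent with \eqref{equ2}, and $B=1/s+b_0+O(s)$. The $1/s$ divergences cancel, leaving a constant that should equal $\rho(N,p)$, and this will be checked explicitly with the digamma values. The remaining integrals converge uniformly in $x$ by dominated convergence: the near part is dominated by $|z|^{\beta-N-sp}|\ln|z||$, and the far part by $(|u(x+z)|+\ldots)|z|^{-N}(1+\ln|z|)$ on the compact support. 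Their coefficients $C(N,s,p)$ and $pC(N,s,p)$ behave like $O(s)$, while $B(N,s,p)C(N,s,p)\to C(N,p)$. Only the terms carrying $BC$ survive, and they converge uniformly to the terms of \eqref{equ16}.
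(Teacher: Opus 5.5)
Your outline follows the paper's strategy. For $s\to0^+$ you use the same splitting at $|z|=1$, with $g_p(u(x))$ subtracted in the far field. You also use the same cancellation $C(N,s,p)\frac{\omega_N}{sp}\bigl(B(N,s,p)-\frac1s\bigr)\to\rho(N,p)$, together with $B(N,s,p)C(N,s,p)\to C(N,p)$ and $pC(N,s,p)\to0$. For $t\to s$ you use a second-order Taylor expansion in the order, a near/far split, and $|x|^{-N}$-type decay, as in the paper's bounds on $\mathbf{E}_1$--$\mathbf{E}_3$.

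Your execution is cleaner in two ways. First, expanding the kernel $\kappa_t(z)$ pointwise gives one majorant that is uniform in $x$ and in $t$ near $s$. You therefore need neither the paper's $x$-dependent intermediate point $\theta$ nor a formula for $\frac{d^2}{dt^2}(-\Delta)^t_pu$. Second, working with the second difference $\delta_zu$ keeps the cancellation at $z=0$. The paper instead bounds $|u(x)-u(y)|^{p-1}$ in absolute value, which forces it to assume $\alpha(p-1)>\theta p$. Since $\alpha\le1$, that assumption cannot hold once $s\ge(p-1)/p$.

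However, the step ``in both cases we can take $\beta>sp$'' fails in part of the range. For $1<p<2$ and $u\in C^2_c(\mathbb{R}^N)$, the best you can take is $\alpha=1$, i.e.\ $\beta=2(p-1)$. This comes from $|g_p(a)-g_p(b)|\le2^{2-p}|a-b|^{p-1}$ and $|2u(x)-u(x+z)-u(x-z)|\le\|D^2u\|_\infty|z|^2$. The exponent is sharp at nondegenerate critical points. If $u=1-|x|^2$ near $0$, then $\delta_zu(0)=2|z|^{2(p-1)}>0$ for small $|z|$, so $(-\Delta)^s_pu(0)=+\infty$ whenever $p\le 2/(2-s)$. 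In that regime the fixed-$s$ claims (boundedness, $L^q$ membership, convergence of the difference quotient) cannot be proved for all $u\in C^2_c$. An extra hypothesis such as $p>2/(2-s)$ is needed. The paper's proof has the same defect. Your $s\to0^+$ part is unaffected, because $2(p-1)>sp$ for all small $s$.

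Your remark on $s=\frac12$ also does not work. The two branches of \eqref{equ4} do not coincide at $t=\frac12$: their ratio is $\sqrt{\pi}/\bigl(2\Gamma(\frac{p+1}{2})\bigr)$, e.g.\ $\sqrt{\pi}/2$ for $p=3$. So $t\mapsto(-\Delta)^t_pu(x)$ jumps at $\frac12$, and the right difference quotient at $s=\frac12$ blows up. Equal $B$'s only mean that the two analytic branches have the same logarithmic derivative; they do not remove the jump. Apart from these two restrictions, which come from the statement and normalization rather than your method, your argument is complete.
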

\begin{proof}
	Fixed $x \in \mathbb{R}^{N}$, and since $u \in C^{2}_{c}(\mathbb{R}^{N})$, there exists $R > 1 $ such that $\operatorname{supp}(u) \subset \mathcal{B}_{R}(x)$. Consequently, we can decompose the operator $(-\Delta)_{p}^{s+\log} u(x)$ as follows
\begin{equation}
\label{equ11}
\begin{aligned}
(-\Delta)_{p}^{s+\log} u(x)
&= \underbrace{C(N, s, p) B(N, s, p) \mathrm{P.V.} \int_{\mathcal{B}_{1}(x)} 
	\frac{|u(x) - u(y)|^{p-2} (u(x) - u(y))}{|x - y|^{N + sp}}  dy}_{\mathbf{I}_{1}} \\
&\quad + \underbrace{C(N, s, p) B(N, s, p) \int_{\mathbb{R}^{N} \setminus \mathcal{B}_{1}(x)} 
	\frac{|u(x) - u(y)|^{p-2} (u(x) - u(y)) - |u(x)|^{p-2} u(x)}{|x - y|^{N + sp}}  dy}_{\mathbf{I}_{2}} \\
&\quad - \underbrace{p C(N, s, p) \mathrm{P.V.} \int_{\mathcal{B}_{1}(x)} 
	\frac{|u(x) - u(y)|^{p-2} (u(x) - u(y)) \ln |x - y|}{|x - y|^{N + sp}}  dy}_{\mathbf{I}_{3}} \\
&\quad - \underbrace{\Bigg( p C(N, s, p) \int_{\mathbb{R}^{N} \setminus \mathcal{B}_{1}(x)} 
	\frac{|u(x) - u(y)|^{p-2} (u(x) - u(y)) \ln |x - y|}{|x - y|^{N + sp}}  dy } \\
&\qquad \underbrace{- C(N, s, p) B(N, s, p) \int_{\mathbb{R}^{N} \setminus \mathcal{B}_{1}(x)} 
	\frac{|u(x)|^{p-2} u(x)}{|x - y|^{N + sp}}  dy \Bigg)}_{\mathbf{I}_{4}}.
\end{aligned}
\end{equation}
	First, by considering the limit as $s \to 0^+$, we can straightforwardly compute
	\begin{equation*}
	\begin{aligned}
	C(N, s, p) B(N, s, p) &= \frac{p 2^{2s-1} \Gamma \left(\frac{N+sp}{2}\right)}{\pi^{\frac{N}{2}} \Gamma \left(1 - s\right)} \Bigg[ 1 + s \Bigg( 2 \ln 2 + \frac{p}{2} \psi \left(\frac{N+sp}{2}\right) + \psi(1-s) \Bigg) \Bigg] \\
	&\longrightarrow \frac{p \Gamma \left(\frac{N}{2}\right)}{2 \pi^{\frac{N}{2}}} = C(N, p) \quad \text{as } s \to 0^+.
	\end{aligned}
	\end{equation*}
	Under the regularity assumption on $u$, we assume that $u \in C^{1,\alpha}_c(\mathbb{R}^N)$ with $\alpha (p-1) > sp$. Then, we have\\[2mm] \textbf{Estimate of} $\mathbf{I}_{1}$. To estimate this integral, let $0 < \varepsilon < 1$. Then, we obtain for some constant $C > 0$
	\begin{equation*}
	\begin{aligned}
	\left| \mathrm{P.V.} \int_{\mathcal{B}_{\epsilon}(x)} \frac{|u(x) - u(y)|^{p-2} (u(x) - u(y))}{|x - y|^{N + sp}} dy \right|
	&\leq C \int_{\mathcal{B}_{\epsilon}(x)} \frac{dy}{|x - y|^{N + sp - \alpha (p - 1)}} \\
	&= \frac{\omega_N}{\alpha (p - 1) - sp} \epsilon^{\alpha (p - 1) - sp},
	\end{aligned}
	\end{equation*}
	and
	
	\begin{equation*}
	\begin{aligned}
	\left| \mathrm{P.V.} \int_{\mathcal{B}_{\epsilon}(x)} \frac{|u(x) - u(y)|^{p-2} (u(x) - u(y))}{|x - y|^{N}} dy \right|
	&\leq C \int_{\mathcal{B}_{\epsilon}(x)} \frac{dy}{|x - y|^{N -  p + 1}} \\
	&= \frac{\omega_N}{p - 1} \epsilon^{p - 1}.
	\end{aligned}
	\end{equation*}
	Hence, we deduce that
	\begin{equation*}
	\begin{aligned}
	\Bigg| \mathrm{P.V.} \int_{\mathcal{B}_{1}(x)} \frac{|u(x) - u(y)|^{p-2} (u(x) - u(y))}{|x - y|^{N + sp}} dy - \mathrm{P.V.} \int_{\mathcal{B}_{1}(x)} \frac{|u(x) - u(y)|^{p-2} (u(x) - u(y))}{|x - y|^{N}} dy \Bigg| \\
	\leq 2^{p-1} \|u\|_{L^{\infty}}^{p-1} \int_{\mathcal{B}_{1}(x) \setminus \mathcal{B}_{\epsilon}(x)} \frac{1}{|x - y|^{N}}\left(e^{sp \ln \dfrac{1}{|x - y|}} - 1\right) dy + \frac{\omega_N}{\alpha (p - 1) - sp} \epsilon^{\alpha (p - 1) - sp} + \frac{\omega_N}{p - 1} \epsilon^{p - 1}.
	\end{aligned}
	\end{equation*}
	Consequently, we obtain the following limit
	\begin{equation}
	\label{equ7}
	\mathbf{I}_{1} \longrightarrow C(N, p) \mathrm{P.V.} \int_{\mathcal{B}_{1}(x)} \frac{|u(x) - u(y)|^{p-2} (u(x) - u(y))}{|x - y|^{N}} dy \quad \text{as } s \to 0^{+}.
	\end{equation}
	\noindent
	\textbf{Estimate of} \(\mathbf{I}_{2}\). We first observe that, for some constant $C>0$, the following identity holds:
	\begin{equation*}
	\begin{aligned}
	&\int_{\mathbb{R}^{N} \setminus \mathcal{B}_{1}(x)} 
	\frac{|u(x)-u(y)|^{p-2}(u(x)-u(y))-|u(x)|^{p-2}u(x)}
	{|x-y|^{N+sp}} dy \\[4pt]
	&\qquad =
	\int_{\mathcal{B}_{R}(x)\setminus \mathcal{B}_{1}(x)}
	\frac{|u(x)-u(y)|^{p-2}(u(x)-u(y))-|u(x)|^{p-2}u(x)}
	{|x-y|^{N+sp}} dy.
	\end{aligned}
	\end{equation*}
Therefore, an application of the Dominated Convergence Theorem yields
\begin{equation*}
\begin{aligned}
&\int_{\mathcal{B}_{R}(x)\setminus \mathcal{B}_{1}(x)}
\frac{|u(x)-u(y)|^{p-2}(u(x)-u(y))-|u(x)|^{p-2}u(x)}
{|x-y|^{N+sp}} dy\\[4pt]
&\longrightarrow
\int_{\mathcal{B}_{R}(x)\setminus \mathcal{B}_{1}(x)}
\frac{|u(x)-u(y)|^{p-2}(u(x)-u(y))-|u(x)|^{p-2}u(x)}
{|x-y|^{N}} dy
\qquad \text{as } s\to 0^{+}.
\end{aligned}
\end{equation*}
Consequently, we deduce that
	\begin{equation}
	\begin{aligned}
	\label{equ8}
	\mathbf{I}_{2}
	&=
	C(N,s,p) B(N,s,p)
	\int_{\mathcal{B}_{R}(x)\setminus \mathcal{B}_{1}(x)}
	\frac{|u(x)-u(y)|^{p-2}(u(x)-u(y))-|u(x)|^{p-2}u(x)}
	{|x-y|^{N+sp}} dy \\[4pt]
	&\longrightarrow
	C(N,p)
	\int_{\mathcal{B}_{R}(x)\setminus \mathcal{B}_{1}(x)}
	\frac{|u(x)-u(y)|^{p-2}(u(x)-u(y))-|u(x)|^{p-2}u(x)}
	{|x-y|^{N}} dy \\[4pt]
	&=
	C(N,p)
	\int_{\mathbb{R}^{N}\setminus \mathcal{B}_{1}(x)}
	\frac{|u(x)-u(y)|^{p-2}(u(x)-u(y))-|u(x)|^{p-2}u(x)}
	{|x-y|^{N}} dy,
	\qquad \text{as } s\to0^{+}.
	\end{aligned}
	\end{equation}
	\noindent
	\textbf{Estimate of} \(\mathbf{I}_{3}\).  Observe that, for some constant $ C > 0 $, we have
	\begin{equation*}
	\begin{aligned}
	&\left| p C(N, s, p) \mathrm{P.V.} \int_{\mathcal{B}_{1}(x)} \frac{|u(x) - u(y)|^{p-2} (u(x) - u(y)) \ln |x - y|}{|x - y|^{N + sp}} dy\right|  \\[4pt]
	&\qquad\leq C C(N, s, p)  \int_{\mathcal{B}_{1}(x)} \frac{-\ln |x - y|}{|x - y|^{N + sp - \alpha (p - 1)}} dy \\[4pt]
	&\qquad  = C C(N, s, p) \frac{\omega_N}{\big(\alpha(p-1) - sp\big)^2}  \to 0\quad \text{as } s \to 0^{+}.
	\end{aligned}
	\end{equation*}
	Then, we get that
	\begin{equation}
	\label{equ9}
	\mathbf{I}_{3} \longrightarrow 0 \quad \text{as } s \to 0^{+}.
	\end{equation}
	\noindent
	\textbf{Estimate of} \(\mathbf{I}_{4}\). First, a straightforward computation shows that
	\begin{equation}
	\label{equ6}
	\begin{aligned}
	& p C(N,s,p) |u(x)|^{p-2}u(x)
	\int_{\mathbb{R}^{N}\setminus \mathcal{B}_{R}(x)}
	\frac{\ln |x-y|}{|x-y|^{N+sp}} dy \\[4pt]
	&\quad + p C(N,s,p)
	\int_{\mathcal{B}_{R}(x)\setminus \mathcal{B}_{1}(x)}
	\frac{|u(x)-u(y)|^{p-2}(u(x)-u(y)) \ln |x-y|}
	{|x-y|^{N+sp}} dy \\[4pt]
	&= p C(N,s,p) |u(x)|^{p-2}u(x)
	\int_{\mathbb{R}^{N}\setminus \mathcal{B}_{1}(x)}
	\frac{\ln |x-y|}{|x-y|^{N+sp}} dy \\[4pt]
	&\qquad - p C(N,s,p) |u(x)|^{p-2}u(x)
	\int_{\mathcal{B}_{R}(x)\setminus \mathcal{B}_{1}(x)}
	\frac{\ln |x-y|}{|x-y|^{N+sp}} dy \\[4pt]
	&\quad + p C(N,s,p)
	\int_{\mathcal{B}_{R}(x)\setminus \mathcal{B}_{1}(x)}
	\frac{|u(x)-u(y)|^{p-2}(u(x)-u(y)) \ln |x-y|}
	{|x-y|^{N+sp}} dy.
	\end{aligned}
	\end{equation}
Next, we establish the following auxiliary estimate:
\begin{equation*}
\int_{\mathcal{B}_{R}(x)\setminus \mathcal{B}_{1}(x)}
\frac{\ln |x-y|}{|x-y|^{N+sp}} dy
\longrightarrow
\frac{\omega_N}{2}(\ln R)^2
\qquad \text{as } s \to 0^{+},
\end{equation*}
	and
	\begin{equation*}
	\begin{aligned}
	&\left|
	\int_{\mathcal{B}_{R}(x)\setminus \mathcal{B}_{1}(x)}
	\frac{|u(x)-u(y)|^{p-2}(u(x)-u(y)) \ln |x-y|}
	{|x-y|^{N+sp}}
	 dy
	\right| \\[4pt]
	&\qquad \leq
	\int_{\mathcal{B}_{R}(x)\setminus \mathcal{B}_{1}(x)}
	\frac{\ln |x-y|}
	{|x-y|^{N+sp-\alpha(p-1)}} dy \\[4pt]
	&\qquad =
	\frac{\omega_{N}}{\big(\alpha(p-1)-sp\big)^2}
	\Big[
	R^{\alpha(p-1)-sp}
	\big(
	(\alpha(p-1)-sp)\ln R -1
	\big)
	+1
	\Big].
	\end{aligned}
	\end{equation*}
Hence, recalling that \( C(N,s,p) \) is defined in \eqref{equ4}, we deduce that the last two terms in \eqref{equ6} converge to zero as \( s \to 0^{+} \). Moreover, we observe that, for sufficiently small values of \( s \),
	\begin{equation*}
	\begin{aligned}
	& p C(N, s, p) \int_{\mathbb{R}^{N} \setminus \mathcal{B}_{1}(x)} \frac{\ln |x - y|}{|x - y|^{N + sp}} dy - C(N, s, p) B(N, s, p) \int_{\mathbb{R}^{N} \setminus \mathcal{B}_{1}(x)} \frac{dy}{|x - y|^{N + sp}} \\[4pt]
	&\quad = C(N, s, p)  \frac{\omega_N}{sp} \left[\dfrac{1}{s} - B(N, s, p)\right] \\[4pt]
	&\quad = \frac{2^{2s-1} \omega_N \Gamma \left(\frac{N+sp}{2}\right)}{\pi^{N/2} \Gamma(1 - s)} \left[ \frac{1}{s} - \left( \frac{1}{s} + 2\ln 2 + \frac{p}{2} \psi \left(\frac{N+sp}{2}\right) + \psi(1-s)\right) \right] \\[4pt]
	&\quad \longrightarrow -\left( 2\ln 2 
	+ \frac{p}{2} \psi \left(\frac{N}{2}\right) - \gamma\right) \quad \text{as } s \to 0^{+}.
	\end{aligned}
	\end{equation*}
Therefore, we deduce that
\begin{equation}
\label{equ10}
\mathbf{I}_{4} \to - \rho(N, p) |u(x)|^{p-2} u(x) 
\quad \text{as } s \to 0^{+}.
\end{equation}
	Combining \eqref{equ7}-\eqref{equ10} and passing to the limit in \eqref{equ11} as \( s \to 0^{+} \), we conclude that
	\begin{equation*}
	(-\Delta)_{p}^{s+\log} u \;\longrightarrow\; L_{\Delta_{p}} u \quad \text{in } L^{\infty}(\mathbb{R}^{N}) \quad \text{as } s \to 0^{+}.
	\end{equation*}
On the other hand, as a direct consequence of Proposition~\ref{prop1}, it holds that for every $s \in (0,1)$,
\[
\lim_{t \to s} \frac{(-\Delta)_{p}^{t} u(x) - (-\Delta)_{p}^{s} u(x)}{t - s}
= (-\Delta)_{p}^{s+\log} u(x)
\quad \text{in } \mathbb{R}^{N}.
\]
 Applying Taylor's theorem with remainder, there exists $\theta = \theta(s,t)$ lying between $s$ and $t$ such that
		\[
		(-\Delta)_{p}^{t} u(x)
		= (-\Delta)_{p}^{s} u(x)
		+ (t - s) (-\Delta)_{p}^{s+\log} u(x)
		+ \frac{(t - s)^{2}}{2}
		\left.\frac{d^{2}}{dt^{2}}(-\Delta)_{p}^{t} u(x) \right|_{t=\theta}.
		\]
		Consequently, we infer that
		\begin{equation}\label{equ26}
		\left|
		\frac{(-\Delta)_{p}^{t} u(x) - (-\Delta)_{p}^{s} u(x)}{t - s}
		- (-\Delta)_{p}^{s+\log} u(x)
		\right|
		=
		\frac{|t - s|}{2}
		\left|
		\left.\frac{d^{2}}{dt^{2}}(-\Delta)_{p}^{t} u(x) \right|_{t=\theta}
		\right|.
		\end{equation}
		Moreover, we have
		\begin{equation*}
		\begin{aligned}
		\left. \frac{d^{2}}{dt^{2}}(-\Delta)_{p}^{t} u(x) \right|_{t=\theta}
		&= \left. \frac{d}{dt}(-\Delta)_{p}^{t+\log} u(x) \right|_{t=\theta} \\
		&= \left(	\left. \frac{d}{d t} B(N, t,p)\right|_{t=\theta} + B(N,\theta,p)^{2}\right) (-\Delta)_{p}^{\theta} u(x) \\[4pt]
		&\quad - p B(N,\theta,p)\big(C(N,\theta,p) + 1 \big) \mathrm{P.V.} \int_{\mathbb{R}^{N}}
		\frac{|u(x)-u(y)|^{p-2}(u(x)-u(y)) \ln|x-y|}
		{|x-y|^{N+\theta p}} dy \\[4pt]
		&\quad +p^{2}  C(N,\theta,p) \mathrm{P.V.} \int_{\mathbb{R}^{N}}
		\frac{|u(x)-u(y)|^{p-2}(u(x)-u(y)) \big(\ln|x-y|\big)^{2}}
		{|x-y|^{N+\theta p}} dy .
		\end{aligned}
		\end{equation*}
		Let $\delta>0$ be sufficiently small, and assume that $|s-t|<\delta$. Under this assumption, it is readily verified that there exists a constant $C>0$, independent of $\theta$, such that
		\begin{equation*}
		\begin{aligned}
		\left|
		\left.\frac{d^{2}}{dt^{2}}(-\Delta)_{p}^{t} u(x) \right|_{t=\theta}
		\right|
		& \leq C  \Bigg[\underbrace{\mathrm{P.V.} \int_{\mathbb{R}^{N}}
			\frac{|u(x)-u(y)|^{p-1}}
			{|x-y|^{N+\theta p}} dy}_{\mathbf{E}_{1}} + \underbrace{\mathrm{P.V.} \int_{\mathbb{R}^{N}}
			\frac{|u(x)-u(y)|^{p-1} |\ln|x-y|| }
			{|x-y|^{N+\theta p}} dy}_{\mathbf{E}_{2}} \\[4pt]
		&\qquad + \underbrace{\mathrm{P.V.} \int_{\mathbb{R}^{N}}
			\frac{|u(x)-u(y)|^{p-1}\big(\ln|x-y|\big)^{2}}
			{|x-y|^{N+\theta p}} dy}_{\mathbf{E}_{3}} \Bigg] .
		\end{aligned}
		\end{equation*}
	Next, we fix $R>4$ such that  $ \mathrm{supp}(u)\subset \mathcal{B}_{\frac{R}{4}}(0). $ Throughout the sequel, $C>0$ denotes a constant independent of $\theta$. 	For any $x\in \mathbb{R}^{N}$, we first consider the term $\mathbf{E}_{1}$, which can be decomposed as follows:
		\begin{equation}\label{equ27}
		\begin{aligned}
		\mathbf{E}_{1}
		&= \int_{\mathbb{R}^{N}\setminus \mathcal{B}_{R}(x)}
		\frac{|u(x)-u(y)|^{p-1}}{|x-y|^{N+\theta p}} dy + \mathrm{P.V.} \int_{\mathcal{B}_{R}(x)}
		\frac{|u(x)-u(y)|^{p-1}}{|x-y|^{N+\theta p}} dy.
		\end{aligned}
		\end{equation}
		We distinguish the following cases for the first term: \\[4pt]
		$ \bullet $ Let $x \in \mathcal{B}_{\frac{R}{2}}(0)$ and $y \in \mathbb{R}^{N} \setminus \mathcal{B}_{R}(x)$. Hence $y \notin \operatorname{supp}(u)$, we deduce that $u(y)=0$. Hence,
		\begin{equation}\label{equ28}
		\begin{aligned}
		\int_{\mathbb{R}^{N}\setminus \mathcal{B}_{R}(x)}
		\frac{|u(x)-u(y)|^{p-1}}{|x-y|^{N+\theta p}} dy
		&= |u(x)|^{p-1} \int_{\mathbb{R}^{N}\setminus \mathcal{B}_{R}(x)}
		\frac{dy}{|x-y|^{N+\theta p}} \\
		&= \frac{\omega_N}{\theta p} R^{-\theta p} |u(x)|^{p-1}
		\leq C |u(x)|^{p-1},
		\end{aligned}
		\end{equation}
		$ \bullet $ Let \( x \in \mathbb{R}^{N} \setminus \mathcal{B}_{\frac{R}{2}}(0) \) and \( y \in \mathbb{R}^{N} \setminus \mathcal{B}_{R}(x) \), with the additional assumption that \( y \in \operatorname{supp}(u) \). In this setting, we still have \( u(x)=0 \). Moreover, since \( y \in \operatorname{supp}(u) \), it follows that $ |y| \leq \frac{|x|}{2}, $ which implies $ |x-y| \geq \frac{|x|}{2} > 1. $ Consequently, we deduce that
		\begin{equation}\label{equ29}
		\int_{\mathbb{R}^{N}\setminus \mathcal{B}_{R}(x)}
		\frac{|u(x)-u(y)|^{p-1}}{|x-y|^{N+\theta p}} dy
		\leq 2^{N}  |x|^{-N}  \|u\|_{L^{p-1}(\mathbb{R}^{N})}^{p-1}.
		\end{equation}
		We also distinguish the following cases for the second term in \eqref{equ27}: \\[4pt]
		$ \bullet $ Let $x \in \mathcal{B}_{\frac{R}{2}}(0)$ and $y \in \mathcal{B}_{R}(x)$. Consequently, under the assumption that $\alpha (p-1) > \theta p$, we infer that
		\begin{equation}\label{equ30}
		\begin{aligned}
		\mathrm{P.V.} \int_{\mathcal{B}_{R}(x)}
		\frac{|u(x)-u(y)|^{p-1}}{|x-y|^{N+\theta p}} dy
		& \leq C \int_{\mathcal{B}_{R}(x)} \frac{dy}{|x - y|^{N + \theta p - \alpha (p - 1)}} \\[4pt]
		&= \frac{\omega_N}{\alpha(p-1)-\theta p} R^{\alpha(p-1)-\theta p}
		\leq C.
		\end{aligned}
		\end{equation}
		$ \bullet $ Let \( x \in \mathbb{R}^{N} \setminus \mathcal{B}_{\frac{R}{2}}(0) \) and $y \in \mathcal{B}_{R}(x)$ with $y \in \mathrm{supp}(u)$. Then \( u(x) = 0 \), and hence
		\begin{equation}\label{equ31}
		\int_{\mathcal{B}_{R}(x)}
		\frac{|u(x)-u(y)|^{p-1}}{|x-y|^{N+\theta p}} dy
		= \int_{\mathcal{B}_{R}(x)}
		\frac{|u(y)|^{p-1}}{|x-y|^{N+\theta p}} dy
		\leq 2^{N} |x|^{-N} \|u\|_{L^{p-1}(\mathbb{R}^{N})}^{p-1}.
		\end{equation}
	Consequently, for any $1 < q < \infty$, it follows from \eqref{equ28}-\eqref{equ31} that
	\begin{equation*}
	\int_{\mathbb{R}^{N}} \mathbf{E}_{1}^{q} dx
	\leq C \Bigg[
	\|u\|_{L^{q(p-1)}(\mathbb{R}^{N})}^{q(p-1)}
	+ 2^{qN+1} \frac{\omega_N}{N(q-1)}
	\left(\frac{R}{2}\right)^{N(1-q)}
	\|u\|_{L^{p-1}(\mathbb{R}^{N})}^{q(p-1)}
	+ \omega_N \left(\frac{R}{2}\right)^N
	\Bigg]
	< \infty,
	\end{equation*}
	while for $q = \infty$, we obtain
	\begin{equation*}
	\left\| \mathbf{E}_{1}\right\|_{L^{\infty}(\mathbb{R}^{N})}
	\leq C \Bigg[
	\|u\|_{L^{\infty}(\mathbb{R}^{N})}^{p-1}
	+ 2^{2N+1} R^{-N}
	\|u\|_{L^{p-1}(\mathbb{R}^{N})}^{p-1}
	+ 1
	\Bigg]
	< \infty.
	\end{equation*}
\textbf{Estimate of} $\mathbf{E}_{2}$. In a similar manner, we write
\begin{equation}\label{equ34}
\begin{aligned}
\mathbf{E}_{2}
&= \int_{\mathbb{R}^{N}\setminus \mathcal{B}_{R}(x)}
\frac{|u(x)-u(y)|^{p-1} |\ln|x-y||}{|x-y|^{N+\theta p}} dy
+ \mathrm{P.V.} \int_{\mathcal{B}_{R}(x)}
\frac{|u(x)-u(y)|^{p-1} |\ln|x-y||}{|x-y|^{N+\theta p}} dy.
\end{aligned}
\end{equation}
We follow the same method for $\mathbf{E}_{1}$, with slightly different computations. Indeed, for the first term, we have:\\[4pt]
$\bullet$ Let $x \in \mathcal{B}_{\frac{R}{2}}(0)$ and $y \in \mathbb{R}^{N} \setminus \mathcal{B}_{R}(x)$. Since $y \notin \operatorname{supp}(u)$, it follows that 
\begin{equation*}
\begin{aligned}
\int_{\mathbb{R}^{N}\setminus \mathcal{B}_{R}(x)}
\frac{|u(x)-u(y)|^{p-1} \ln|x-y|}{|x-y|^{N+\theta p}} dy
&= |u(x)|^{p-1}
\int_{\mathbb{R}^{N}\setminus \mathcal{B}_{R}(x)}
\frac{\ln|x-y|}{|x-y|^{N+\theta p}} dy \\
&= \omega_{N-1} R^{-\theta p}
\left(\frac{\ln R}{\theta p}+\frac{1}{(\theta p)^2}\right) |u(x)|^{p-1} \leq C |u(x)|^{p-1}.
\end{aligned}
\end{equation*}
$\bullet$ Let \( x \in \mathbb{R}^{N} \setminus \mathcal{B}_{\frac{R}{2}}(0) \) and \( y \in \mathbb{R}^{N} \setminus \mathcal{B}_{R}(x) \), and assume in addition that \( y \in \operatorname{supp}(u) \). In this case, one has \( u(x)=0 \) and \( |y| \leq \frac{|x|}{2} \), which yields \( |x-y| \geq \frac{|x|}{2} > 1 \). Consequently, we obtain
\begin{equation*}
\begin{aligned}
\int_{\mathbb{R}^{N}\setminus \mathcal{B}_{R}(x)}
\frac{|u(x)-u(y)|^{p-1} \ln|x-y|}{|x-y|^{N+\theta p}} dy
&\leq \int_{\operatorname{supp}(u)}
\frac{|u(y)|^{p-1} \ln|x-y|}{|x-y|^{N+\theta p}} dy \\
&\leq 2^{N}  |x|^{-N} \ln\left( \dfrac{3}{2}\left| x\right|\right)  \|u\|_{L^{p-1}(\mathbb{R}^{N})}^{p-1}.
\end{aligned}
\end{equation*}
 	For the second term in \eqref{equ34}, let $x \in \mathcal{B}_{\frac{R}{2}}(0)$ and $y \in \mathcal{B}_{R}(x)$. Then, we deduce that
 	\begin{equation*}
 	\begin{aligned}
 	\mathrm{P.V.} &\int_{\mathcal{B}_{R}(x)}
 	\frac{|u(x)-u(y)|^{p-1} |\ln|x-y||}{|x-y|^{N+\theta p}} dy
 	 \leq C \int_{\mathcal{B}_{R}(x)} \frac{|\ln|x-y||}{|x - y|^{N + \theta p - \alpha (p - 1)}} dy \\[4pt]
 	&=\omega_N \left[
 	R^{\alpha(p-1)-\theta p}
 	\left(
 	\frac{\ln R}{\alpha(p-1)-\theta p}
 	-
 	\frac{1}{\left(\alpha(p-1)-\theta p\right)^2}
 	\right)
 	+
 	\frac{2}{\left(\alpha(p-1)-\theta p\right)^2}
 	\right]
 	\leq C.
 	\end{aligned}
 	\end{equation*}
 	$ \bullet $ Let \( x \in \mathbb{R}^{N} \setminus \mathcal{B}_{\frac{R}{2}}(0) \) and $y \in \mathcal{B}_{R}(x)$ with $y \in \mathrm{supp}(u)$. Then \( u(x) = 0 \), and hence
 	\begin{equation*}
 	\int_{\mathcal{B}_{R}(x)}
 	\frac{|u(x)-u(y)|^{p-1} |\ln|x-y||}{|x-y|^{N+\theta p}} dy
 	\leq 2^{N}  |x|^{-N} \ln\left( \dfrac{3}{2}\left| x\right|\right)  \|u\|_{L^{p-1}(\mathbb{R}^{N})}^{p-1}.
 	\end{equation*}
   Consequently, for any $1 < q < \infty$, it follows that
\begin{align*}
 	\int_{\mathbb{R}^{N}} \mathbf{E}_{2}^{q} dx
 	&\leq C \left[
 	\|u\|_{L^{q(p-1)}(\mathbb{R}^{N})}^{q(p-1)}
 	+ 2 \omega_N \left(\frac{3}{2}\right)^{(q-1)N}
 	\frac{\Gamma \left(q+1, (q-1)N\ln \frac{3R}{4}\right)}{\big((q-1)N\big)^{q+1}}
 	\|u\|_{L^{p-1}(\mathbb{R}^{N})}^{q(p-1)}
 	+ \omega_N \left(\frac{R}{2}\right)^N
 	\right]\\
 	&< \infty.
\end{align*}
and for $ q = \infty $
				\begin{equation*}
		\left\| \mathbf{E}_{2}\right\|_{L^{\infty}(\mathbb{R}^{N})}
		\leq C \left[
		\|u\|_{L^{q(p-1)}(\mathbb{R}^{N})}^{q(p-1)}
		+ 
		\|u\|_{L^{p-1}(\mathbb{R}^{N})}^{q(p-1)}
		+ 1
		\right]
		< \infty.
		\end{equation*}
		\textbf{Estimate of} $\mathbf{E}_{3}$. We further decompose $\mathbf{E}_{3}$ as follows
		\begin{equation*}
		\begin{aligned}
		\mathbf{E}_{3}
		&= \int_{\mathbb{R}^{N}\setminus \mathcal{B}_{R}(x)}
		\frac{|u(x)-u(y)|^{p-1} (\ln|x-y|)^{2}}{|x-y|^{N+\theta p}} dy + \mathrm{P.V.} \int_{\mathcal{B}_{R}(x)}
		\frac{|u(x)-u(y)|^{p-1} (\ln|x-y|)^{2}}{|x-y|^{N+\theta p}} dy .
		\end{aligned}
		\end{equation*}
	As in the cases of $\mathbf{E}_{1}$ and $\mathbf{E}_{2}$, we first estimate the first term of $\mathbf{E}_{3}$.\\
		$\bullet$ Let $x \in \mathcal{B}_{\frac{R}{2}}(0)$ and $y \in \mathbb{R}^{N} \setminus \mathcal{B}_{R}(x)$. Since $y \notin \operatorname{supp}(u)$, we have $u(y)=0$, and hence
		\begin{equation*}
		\begin{aligned}
		\int_{\mathbb{R}^{N}\setminus \mathcal{B}_{R}(x)}
		\frac{|u(x)-u(y)|^{p-1} (\ln|x-y|)^{2}}{|x-y|^{N+\theta p}} dy
		&= |u(x)|^{p-1}
		\int_{\mathbb{R}^{N}\setminus \mathcal{B}_{R}(x)}
		\frac{ (\ln|x-y|)^{2}}{|x-y|^{N+\theta p}} dy \\
		&= \omega_{N} R^{-\theta p}
		\left(
		\frac{(\ln R)^{2}}{\theta p}
		+\frac{2\ln R}{(\theta p)^{2}}
		+\frac{2}{(\theta p)^{3}}
		\right) |u(x)|^{p-1} \\
		&\leq C |u(x)|^{p-1}.
		\end{aligned}
		\end{equation*}
		$\bullet$ Let \( x \in \mathbb{R}^{N} \setminus \mathcal{B}_{\frac{R}{2}}(0) \) and \( y \in \mathbb{R}^{N} \setminus \mathcal{B}_{R}(x) \), and assume furthermore that \( y \in \operatorname{supp}(u) \). Hence, we get that
		\begin{equation*}
		\begin{aligned}
		\int_{\mathbb{R}^{N}\setminus \mathcal{B}_{R}(x)}
		\frac{|u(x)-u(y)|^{p-1} (\ln|x-y|)^{2}}{|x-y|^{N+\theta p}} dy
		&\leq \int_{\operatorname{supp}(u)}
		\frac{|u(y)|^{p-1} (\ln|x-y|)^{2}}{|x-y|^{N+\theta p}} dy \\
		&\leq 2^{N}  |x|^{-N} \ln^{2} \left(\dfrac{3}{2}|x|\right)
		\|u\|_{L^{p-1}(\mathbb{R}^{N})}^{p-1}.
		\end{aligned}
		\end{equation*}
		For the second term, let \( x \in \mathcal{B}_{\frac{R}{2}}(0) \) and \( y \in \mathcal{B}_{R}(x) \). Then, it follows that
		\begin{equation*}
		\begin{aligned}
		\mathrm{P.V.} &\int_{\mathcal{B}_{R}(x)}
		\frac{|u(x)-u(y)|^{p-1} (\ln|x-y|)^{2}}{|x-y|^{N+\theta p}} dy
		\leq C \int_{\mathcal{B}_{R}(x)} \frac{(\ln|x-y|)^{2}}{|x - y|^{N + \theta p - \alpha (p - 1)}} dy \\[4pt]
		&= \omega_N \frac{R^{\alpha(p-1)-\theta p}}{\alpha(p-1)-\theta p}
		\left[
		(\ln R)^2
		- \frac{2\ln R}{\alpha(p-1)-\theta p}
		+ \frac{2}{(\alpha(p-1)-\theta p)^2}
		\right]< C.
		\end{aligned}
		\end{equation*}
		$\bullet$ Let \( x \in \mathbb{R}^{N} \setminus \mathcal{B}_{\frac{R}{2}}(0) \) and \( y \in \mathcal{B}_{R}(x) \) with \( y \in \mathrm{supp}(u) \). Then \( u(x)=0 \), and therefore
		\begin{equation*}
		\int_{\mathcal{B}_{R}(x)}
		\frac{|u(x)-u(y)|^{p-1} (\ln|x-y|)^{2}}{|x-y|^{N+\theta p}} dy
		\leq 2^{N}  |x|^{-N} \ln^{2} \left(\dfrac{3}{2}|x|\right)
		\|u\|_{L^{p-1}(\mathbb{R}^{N})}^{p-1}.
		\end{equation*}
		Consequently, for any \( 1 < q < \infty \), we obtain
		\begin{align*}
		\int_{\mathbb{R}^{N}} \mathbf{E}_{3}^{q} dx
		&\leq C \Bigg[
		\|u\|_{L^{q(p-1)}(\mathbb{R}^{N})}^{q(p-1)}
		+ 2 \omega_N \left(\frac{3}{2}\right)^{(q-1)N}
		\frac{\Gamma \left(2q+1, (q-1)N \ln \frac{3R}{4}\right)}{\big((q-1)N\big)^{2q+1}}
		\|u\|_{L^{p-1}(\mathbb{R}^{N})}^{q(p-1)}  + \omega_N \left(\frac{R}{2}\right)^N
		\Bigg] \\
		&< \infty.
		\end{align*}
		For \( q = \infty \), we have
		\begin{equation*}
		\left\| \mathbf{E}_{3}\right\|_{L^{\infty}(\mathbb{R}^{N})}
		\leq C \left[
		\|u\|_{L^{q(p-1)}(\mathbb{R}^{N})}^{q(p-1)}
		+ \|u\|_{L^{p-1}(\mathbb{R}^{N})}^{q(p-1)}
		+ 1
		\right]
		< \infty.
		\end{equation*}
	Combining the above estimates with \eqref{equ26}, we deduce that for $1 < q < \infty$,
	\[
	\left\|
	\frac{(-\Delta)^s_p u - (-\Delta)^t_p u}{s - t}
	- (-\Delta)^{s+\log}_p u
	\right\|_{L^{q}(\mathbb{R}^{N})}
	\leq C \frac{|t - s|^{q}}{2^{q}}
	\longrightarrow 0 \quad \text{as } t \to s.
	\]
	In the case $q = \infty$, we similarly obtain
	\[
	\left\|
	\frac{(-\Delta)^s_p u - (-\Delta)^t_p u}{s - t}
	- (-\Delta)^{s+\log}_p u
	\right\|_{L^{\infty}(\mathbb{R}^{N})}
	\leq C \frac{|t - s|}{2}
	\longrightarrow 0 \quad \text{as } t \to s.
	\]
	This completes the proof.	
\end{proof}
Now, with the operator and its underlying kernel $\mathbf{K}^{s+ \log}_{p}$ properly defined, we can now construct the associated functional framework. To provide the necessary baseline context, we first recall the standard definitions and norm structures of the classical fractional Sobolev space $W^{s,p}(\mathbb{R}^N)$.  First, let $\Omega \subset \mathbb{R}^{N}$ be a bounded open set with Lipschitz boundary. Let $u:\mathbb{R}^N \to \mathbb{R}$ be a measurable function and let $p \in [1, +\infty)$. We denote by $\|\cdot\|_{L^{p}(\Omega)}$ the norm in the space $L^{p}(\Omega)$, defined by
\begin{equation*}
\|u\|^{p}_{L^p(\Omega)} := \int_{\Omega} |u|^p dx.
\end{equation*}
The fractional Sobolev space $W^{s,p}(\mathbb{R}^N)$ is defined as
\begin{equation*}
W^{s,p}(\mathbb{R}^N) := \left\{ u \in L^p(\mathbb{R}^N) :\iint_{\mathbb{R}^{2N}} \frac{|u(x) - u(y)|^p}{|x-y|^{N+sp}} dx dy < \infty \right\},
\end{equation*}
equipped with the norm

\begin{equation*}
\|u\|_{W^{s,p}(\mathbb{R}^N)}^{p} := \|u\|_{L^p(\mathbb{R}^N)}^p + \iint_{\mathbb{R}^{2N}} \frac{|u(x) - u(y)|^p}{|x-y|^{N+sp}} dx dy.
\end{equation*}
The space $W_0^{s,p}(\Omega)$ consists of functions
\begin{equation*}
W_0^{s,p}(\Omega) := \left\{ u \in W^{s,p}(\mathbb{R}^N) :u = 0 \text{ a.e. in } \mathbb{R}^N \setminus \Omega \right\},
\end{equation*}
and it is endowed with the Gagliardo semi-norm
\begin{equation*}
\|u\|_{W_0^{s,p}(\Omega)} := [u]_{s, p} = \left( \iint_{\mathbb{R}^{2N}}\frac{|u(x) - u(y)|^p}{|x-y|^{N+sp}} dx dy \right)^{1/p}.
\end{equation*}

\begin{remark}
	\label{remark1}
	It is well-known that the fractional Poincar\'e inequality (see Di Nezza et al. \cite[Theorem 6.5]{DiNezza-Palatucci-Valdinoci}) ensures that the norms $\|\cdot\|_{W^{s,p}(\mathbb{R}^N)}$ and $\|\cdot\|_{W_0^{s,p}(\Omega)}$ are equivalent on $W_0^{s,p}(\Omega)$. Moreover, for $N > sp$, the results by Bisci et al. \cite{Bisci-Radulescu-Servadei} and Di Nezza et al. \cite{DiNezza-Palatucci-Valdinoci} imply that $W_0^{s,p}(\Omega)$ is continuously embedded in $L^r(\Omega)$ for $1 \leq r \leq p_s^* := \frac{Np}{N-sp}$, with the embedding being compact whenever $1 \leq r < p_s^*$.
\end{remark}

We now introduce the fractional logarithmic Sobolev space \(W^{s+\log,p}(\mathbb{R}^{N})\) as the natural energy space corresponding to the positive part of the fractional logarithmic kernel
\begin{equation*}
W^{s+\log,p}(\mathbb{R}^{N}) := \left\{ u : \mathbb{R}^{N} \to \mathbb{R} \;\middle|\; u \in L^{p}(\mathbb{R}^{N}) \text{ and } [u]_{s+\log,p} < \infty \right\},
\end{equation*}
equipped with the norm
\begin{equation}
\label{equ12}
\|u\|_{W^{s+\log,p}(\mathbb{R}^{N})}^{p}:= \|u\|_{L^{p}(\mathbb{R}^{N})}^{p} + [u]_{s+\log,p}^{p}.
\end{equation}
Here, the Gagliardo seminorm is defined as
\begin{equation*}
[u]_{s+\log,p}:= \left( \iint_{\mathbb{R}^{2N}} |u(x)-u(y)|^{p} \mathbf{k}^{s+\log,+}_{p}(x-y) dx dy \right)^{\frac{1}{p}},
\end{equation*}
where the positive kernel $\mathbf{k}^{s+\log,+}_{p}$ is given by (with the notation $ (a)_{\pm} :=\max\left\lbrace 0,  \pm a\right\rbrace$)
\begin{equation*}
\mathbf{k}^{s+\log,+}_{p}(r):= \frac{C(N,s,p) (-\ln|r|)_{+}}{|r|^{N+sp}}, \quad r \in \mathbb{R}^{N}\setminus\{0\}.
\end{equation*}
Notice that the full fractional logarithmic kernel defined previously in \eqref{equ14} admits the natural decomposition
\begin{equation*}
\mathbf{K}^{s+\log}_{p}(r):= B(N, s, p)C(N, s, p)r^{-N-sp} + p \mathbf{k}^{s+\log,+}_{p}(r) - p \mathbf{k}^{s+\log,-}_{p}(r), \quad r > 0.
\end{equation*}
The corresponding Dirichlet subspace is defined as
\begin{equation*}
W^{s+\log,p}_{0}(\Omega) := \Bigl\{ u \in W^{s+\log,p}(\mathbb{R}^{N}) \ : \ u = 0 \ \text{a.e. in } \mathbb{R}^{N} \setminus \Omega \Bigr\}.
\end{equation*}
We point out that, by a new Poincar\'e inequality associated with our fractional logarithmic operator (see Proposition \ref{proposition3} below), the norms $\|\cdot\|_{W^{s+\log,p}(\mathbb{R}^{N})}$ and $[\cdot]_{s+\log,p}$ are equivalent on $W_{0}^{s+\log,p}(\Omega)$. Accordingly, throughout this work, we adopt the following convention:
\begin{equation*}
	\|\cdot\|_{W_{0}^{s+\log,p}(\Omega)} := [\cdot]_{s+\log,p}.
	\end{equation*}
In this paper, we denote by $d(\cdot)$ the distance function to the boundary $\partial \Omega$, defined for any $x \in \Omega$ by
	\[
	d(x) := \operatorname{dist}(x,\partial \Omega) = \inf_{y \in \partial \Omega} |x - y|.
	\]
	We also consider the H\"older space $C^{0,\beta}(\overline{\Omega})$, endowed with the norm
	\[
	\|u\|_{C^{0,\beta}(\overline{\Omega})} := \|u\|_{L^\infty(\Omega)} + [u]_{0,\beta},
	\]
	where the H\"older seminorm is defined by
	\begin{equation*}
	[u]_{0,\beta} = \sup_{\substack{x, y \in \Omega \\ x \neq y}} \frac{|u(x) - u(y)|}{|x - y|^\beta}.
	\end{equation*}

We now establish the fundamental topological and geometrical properties of these spaces.
\begin{proposition}
	Let $1 < p < \infty$ and $0 < s < 1$. The fractional logarithmic Sobolev space $W^{s+\log, p}(\mathbb{R}^N)$, equipped with the norm $\|\cdot\|_{W^{s+\log, p}(\mathbb{R}^N)}$, is a separable, uniformly convex, and reflexive Banach space. Consequently, the closed subspace $W_0^{s+\log, p}(\Omega)$ inherits the same geometric properties.
\end{proposition}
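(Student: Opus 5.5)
The plan is to realize $W^{s+\log,p}(\mathbb{R}^N)$ isometrically as a closed subspace of a product of $L^p$ spaces and inherit everything from there. Define the measure $d\mu(x,y) := \mathbf{k}^{s+\log,+}_{p}(x-y)\,dx\,dy$ on $\mathbb{R}^{2N}$; it is a $\sigma$-finite Borel measure, absolutely continuous with respect to Lebesgue measure, since the kernel is nonnegative and locally bounded away from the diagonal. Consider the map
\[
T : W^{s+\log,p}(\mathbb{R}^N) \to L^p(\mathbb{R}^N) \times L^p(\mathbb{R}^{2N},\mu), \qquad Tu := \bigl(u,\, u(x)-u(y)\bigr).
\]
Equip the product with the norm $\|(f,g)\|^p := \|f\|_{L^p}^p + \|g\|_{L^p(\mu)}^p$. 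By \eqref{equ12}, $T$ is a linear isometry, so the triangle inequality and the norm axioms for $\|\cdot\|_{W^{s+\log,p}(\mathbb{R}^N)}$ follow at once.

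\textbf{Completeness.} Let $(u_n)$ be Cauchy in $W^{s+\log,p}(\mathbb{R}^N)$. Then $u_n \to u$ in $L^p(\mathbb{R}^N)$, and along a subsequence $u_n \to u$ a.e. Hence $u_n(x)-u_n(y) \to u(x)-u(y)$ for a.e. $(x,y)$. Fatou's lemma then gives $[u]_{s+\log,p} \le \liminf_n [u_n]_{s+\log,p} < \infty$, and likewise
\[
[u_n-u]_{s+\log,p}^p \le \liminf_m [u_n-u_m]_{s+\log,p}^p \le \varepsilon \quad \text{for } n \text{ large}.
\]
So $u_n \to u$ in norm. Thus the space is Banach, and the range of $T$ is closed.

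\textbf{Geometric properties.} The product $L^p(\mathbb{R}^N)\times L^p(\mathbb{R}^{2N},\mu)$ with the $\ell^p$-combined norm is isometric to $L^p(X,\nu)$, where $X$ is the disjoint union $\mathbb{R}^N \sqcup \mathbb{R}^{2N}$ and $\nu$ is Lebesgue measure on the first piece and $\mu$ on the second. This is an $L^p$ space over a $\sigma$-finite measure on a countably generated $\sigma$-algebra, so for $1<p<\infty$ it is separable and, by Clarkson's inequalities, uniformly convex.

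\textbf{Transfer.} Closed subspaces of separable, uniformly convex spaces are again separable and uniformly convex, and these properties pass through the isometry $T$. Reflexivity then follows from the Milman--Pettis theorem. For $W^{s+\log,p}_0(\Omega)$ it only remains to check closedness: if $u_n \to u$ in $W^{s+\log,p}(\mathbb{R}^N)$ with $u_n = 0$ a.e. off $\Omega$, then $L^p$ convergence gives $u=0$ a.e. off $\Omega$. The three properties are then inherited as closed-subspace properties.

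The main technical point is verifying that the $L^p(\mu)$ space is separable, since $\mu$ is not finite near the diagonal. This is handled by $\sigma$-finiteness: $\mu$ is finite on the sets $\{|x-y|>1/k,\ |x|+|y|<k\}$, which exhaust $\mathbb{R}^{2N}$ up to the null diagonal. Simple functions on rational boxes are then dense.
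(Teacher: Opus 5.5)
Your proposal is correct and follows essentially the same route as the paper. Both use the isometric embedding $Tu=(u,\,u(x)-u(y))$ into $L^p(\mathbb{R}^N)\times L^p(\mathbb{R}^{2N},\mu)$ with $d\mu=\mathbf{k}^{s+\log,+}_{p}(x-y)\,dx\,dy$, prove the range is closed, inherit separability and uniform convexity, and get reflexivity from Milman--Pettis. The differences are minor: you obtain completeness directly via Fatou's lemma rather than identifying the limit $(v,w)$ of $Tu_n$ as $Tv$, and you spell out the $\sigma$-finiteness of $\mu$ and the closedness of $W^{s+\log,p}_0(\Omega)$, which the paper takes for granted.
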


\begin{proof}
Let $d\mu(x,y) := \mathbf{k}^{s+\log,+}_{p}(x-y) dx dy$ be the measure defined on \(\mathbb{R}^{2N}\) associated with the positive logarithmic kernel. We consider the product space $ E := L^p(\mathbb{R}^N) \times L^p(\mathbb{R}^{2N}, d\mu), $ endowed with the natural product norm defined by
\[
\|(v,w)\|_{E}
:= \left( \|v\|_{L^p(\mathbb{R}^N)}^{p}
+ \|w\|_{L^p(\mathbb{R}^{2N}, d\mu)}^{p}\right) ^{\frac{1}{p}}.
\]
Since $1 < p < \infty$, both spaces $L^p(\mathbb{R}^N)$ and $L^p(\mathbb{R}^{2N}, d\mu)$ are separable and uniformly convex Banach spaces (see Brezis~\cite{Brezis-Book}). Consequently, their Cartesian product $E$ inherits separability and uniform convexity. Therefore, by the Milman-Pettis theorem (see Milman~\cite{Milman} and Pettis~\cite{Pettis}), it follows that $E$ is reflexive.\\[4pt]
Now, consider the linear operator $T: W^{s+\log, p}(\mathbb{R}^N) \to E$ defined by
\begin{equation*}
Tu(x,y) := \big(u(x), u(x)-u(y)\big).
\end{equation*}
By the definition of the corresponding norms, we obtain
\begin{equation*}
\|Tu\|_E
= \left( \|u\|_{L^p(\mathbb{R}^N)}^p
+ \iint_{\mathbb{R}^{2N}} |u(x)-u(y)|^p d\mu(x,y)\right) ^{\frac{1}{p}}
= \|u\|_{W^{s+\log, p}(\mathbb{R}^N)}.
\end{equation*}
Hence, $T$ is a linear isometry. Consequently, the space $W^{s+\log, p}(\mathbb{R}^N)$ is isometrically isomorphic to its image $T\big(W^{s+\log, p}(\mathbb{R}^N)\big)\subset E$. To conclude that $W^{s+\log, p}(\mathbb{R}^N)$ inherits the completeness and geometric properties of $E$, it is sufficient to show that $T\big(W^{s+\log, p}(\mathbb{R}^N)\big)$ is a closed subspace of $E$.  Let $ (u_n) $ be a Cauchy sequence in $W^{s+\log, p}(\mathbb{R}^N)$ such that $Tu_n \to (v,w)$ strongly in $E$. Then we obtain
\begin{align}
u_n &\to v \quad \text{strongly in } L^p(\mathbb{R}^N), \label{LpConvergence} \\
u_n(x)-u_n(y) &\to w(x,y) \quad \text{strongly in } L^p(\mathbb{R}^{2N}, d\mu). \label{ModularConvergence}
\end{align}
From \eqref{LpConvergence}, we can extract a subsequence (still denoted by $(u_{n_k})$) such that $u_{n_k} \to v$ a.e. in $\mathbb{R}^N$. Hence,
\begin{equation*}
u_{n_k}(x) - u_{n_k}(y) \to v(x) - v(y) \quad \text{a.e. in } \mathbb{R}^{2N}.
\end{equation*}
On the other hand, \eqref{ModularConvergence} ensures the existence of a (possibly further) subsequence converging pointwise almost everywhere in $\mathbb{R}^{2N}$ (with respect to the measure $d\mu$) to $w(x,y)$. By the uniqueness of the pointwise limit, it follows that
\[
w(x,y) = v(x) - v(y) \quad \text{for almost every } (x,y) \in \mathbb{R}^{2N}.
\]
Hence, $(v,w) = Tv$, which shows that the image $T\big(W^{s+\log, p}(\mathbb{R}^N)\big)$ is closed in $E$.
We therefore conclude that $W^{s+\log, p}(\mathbb{R}^N)$ is a separable, uniformly convex, and reflexive Banach space. Finally, since $W_0^{s+\log, p}(\Omega)$ is defined as a closed linear subspace of $W^{s+\log, p}(\mathbb{R}^N)$, it inherits completeness, separability, uniform convexity, and reflexivity.
\end{proof}
We are now ready to present a detailed weak formulation for problems involving the fractional logarithmic $p$-Laplacian. To this end, let $u, v \in C_c^{\infty}(\Omega)$, and we define
\begin{equation}
\label{equ15}
\begin{aligned}
\mathcal{J}_{s+\log, p}(u, v):= \dfrac{p}{2}\left( \mathcal{J}_{+}(u, v) - \mathcal{J}_{-}(u, v)\right) + \frac{B(N, s, p) C(N, s, p)}{2} \mathcal{J}_{s}(u, v),
\end{aligned}
\end{equation}
where
\begin{equation*}
\begin{aligned}
\mathcal{J}_{\pm}(u, v)= \iint_{\mathbb{R}^{2N}} |u(x) - u(y)|^{p-2} (u(x) - u(y)) (v(x) - v(y))  \mathbf{k}^{s+\log, \pm}_{p}(x - y) dx dy,
\end{aligned}
\end{equation*}
and
\begin{equation*}
\begin{aligned}
\mathcal{J}_{s}(u, v) = \iint_{\mathbb{R}^{2N}} \frac{|u(x) - u(y)|^{p-2} (u(x) - u(y)) (v(x) - v(y))}{|x - y|^{N + sp}} dx dy.
\end{aligned}
\end{equation*}
\noindent
It is worth noting that the full energy functional $\mathcal{J}_{s+\log, p}$ is, in general, not positive definite. Furthermore, the functionals $\mathcal{J}_{s+\log, p}$, $\mathcal{J}_{\pm}$, and $\mathcal{J}_{s}$ possess the following fundamental properties.

	\begin{proposition}\label{proposition1}
		Let $1 < p < \infty$, $0 < s < 1$, and let $u \in W_{0}^{s+\log,p}(\Omega)$. Then the following assertions hold:
		\begin{enumerate}
			
			\item The following estimate holds:
			\begin{equation*}
			0 \leq \mathcal{J}_{-}(u, u) 
			\leq \frac{2^{p} \omega_N}{(s p)^2} C(N, s, p) \| u \|_{L^{p}(\Omega)}^p.
			\end{equation*}
			
			\item Define
			\[
			\mathrm{diam}(\Omega) := \sup \{ |x - y| : x, y \in \Omega \}.
			\]
			If $\mathrm{diam}(\Omega) < 1$, then
			\begin{equation*}
			\mathcal{J}_{+}(u, u) - \mathcal{J}_{-}(u, u) 
			\geq -\frac{2  \omega_N}{s p} C(N, s, p) (\mathrm{diam}(\Omega))^{-s p} 
			\left( \ln (\mathrm{diam}(\Omega)) + \frac{1}{s p} \right) \| u \|_{L^{p}(\Omega)}^p.
			\end{equation*}
			In particular, if $\mathrm{diam}(\Omega) \leq e^{-\frac{1}{s p}}$, then
			\begin{equation*}
			\mathcal{J}_{+}(u, u) - \mathcal{J}_{-}(u, u) \geq 0.
			\end{equation*}
			
			\item Let $0 < r < 1$. Then
			\begin{equation*}
			\begin{aligned}
			\mathcal{J}_{s}(u, u)  
			& \leq - \frac{1}{C(N,s,p) \ln r} \|u\|^{p}_{W_{0}^{s+\log,p}(\Omega)} 
			+ \frac{2^{p} \omega_N}{sp} r^{-sp} \|u\|^{p}_{L^{p}(\Omega)}.
			\end{aligned}
			\end{equation*}
			
			\item Assume that $\mathrm{diam}(\Omega) \leq e^{-\frac{1}{sp}}$ and $B(N,s,p) \geq 0$. Then the functional $\mathcal{J}_{s+\log,p}(u,u)$ is nonnegative definite on $W_{0}^{s+\log,p}(\Omega)$, and
			\[
			\mathcal{J}_{s+\log,p}(u,u) = 0 \quad \text{if and only if} \quad u = 0.
			\]
			Moreover, $|u| \in W_{0}^{s+\log,p}(\Omega)$, and
			\begin{equation*}
			\mathcal{J}_{s+\log,p}(|u|,|u|) \leq \mathcal{J}_{s+\log,p}(u,u).
			\end{equation*}
			Equality holds if and only if $u$ does not change sign.
			
		\end{enumerate}
\end{proposition}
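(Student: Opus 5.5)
The plan is to prove the four items in order, relying only on the decomposition of the kernel into $\mathbf{k}^{s+\log,\pm}_p$, the fact that $u=0$ outside $\Omega$, and elementary radial integrals.

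\textbf{Item (1).} Since $\mathbf{k}^{s+\log,-}_p\ge 0$, we have $\mathcal{J}_-(u,u)\ge0$. For the upper bound we use $|u(x)-u(y)|^p\le 2^{p-1}(|u(x)|^p+|u(y)|^p)$ together with symmetry in $(x,y)$, which gives
\[
\mathcal{J}_-(u,u)\le 2^p C(N,s,p)\int_{\Omega}|u(x)|^p\int_{|z|>1}\frac{\ln|z|}{|z|^{N+sp}}\,dz\,dx .
\]
In polar coordinates the inner integral equals $\omega_N\int_1^\infty \rho^{-1-sp}\ln\rho\,d\rho=\omega_N/(sp)^2$, which yields (1).

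\textbf{Item (2).} Split $\mathbb{R}^{2N}$ into $\Omega\times\Omega$ and the complement, where at least one variable lies outside $\Omega$. If $\mathrm{diam}(\Omega)<1$, every pair in $\Omega\times\Omega$ has $|x-y|<1$. There the difference $\mathbf{k}^+-\mathbf{k}^-$ is $\mathbf{k}^+\ge0$, so that region may be dropped from below.

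On the cross region $\Omega\times\Omega^c$ (counted twice by symmetry), $|u(x)-u(y)|^p=|u(x)|^p$. We bound
\[
\int_{\Omega^c}(\mathbf{k}^+-\mathbf{k}^-)(x-y)\,dy \;\ge\; -\int_{\mathbb{R}^N\setminus \mathcal{B}_d(x)}\mathbf{k}^-\,dy \;-\;\int_{\mathcal{B}_1(x)\setminus\mathcal{B}_d(x)}(\ldots),
\]
with $d=\mathrm{diam}(\Omega)$. This holds because $\Omega^c$ contains $\mathbb{R}^N\setminus\mathcal{B}_d(x)$ up to the part with $\mathbf{k}^+\ge0$. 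It is cleaner to write $\Omega^c\supset$ complement and keep only the region $|z|>d$, using that $(-\ln)_+-(-\ln)_-=-\ln$ there. Computing $\omega_N\int_d^\infty(-\ln\rho)\rho^{-1-sp}\,d\rho=-\frac{\omega_N}{sp}d^{-sp}\bigl(\ln d+\frac1{sp}\bigr)$ and multiplying by $2C(N,s,p)$ gives the bound.

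The main subtlety is justifying the dropping of the part of $\Omega^c\cap\mathcal{B}_d(x)$, where the kernel is positive. That part contributes nonnegatively, and on $\mathcal{B}_d(x)^c\setminus\Omega^c$ the point $y$ cannot lie in $\Omega$. The "in particular" statement follows because $\ln d+1/(sp)\le0$ when $d\le e^{-1/(sp)}$.

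\textbf{Item (3).} Split $\mathcal{J}_s$ into $|x-y|<r$ and $|x-y|\ge r$. On $|x-y|<r$ we have $-\ln|x-y|\ge-\ln r>0$, so $|x-y|^{-N-sp}\le \mathbf{k}^+/(C(N,s,p)(-\ln r))$, bounding that part by $\|u\|^p/(-C\ln r)$. On $|x-y|\ge r$ we use the same $2^{p-1}$ trick and $\int_{|z|\ge r}|z|^{-N-sp}\,dz=\omega_N r^{-sp}/(sp)$.

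\textbf{Item (4).} Under these hypotheses, (2) gives $\mathcal{J}_+-\mathcal{J}_-\ge0$, and $B\,C\,\mathcal{J}_s\ge0$, so $\mathcal{J}_{s+\log,p}(u,u)\ge0$.

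For definiteness, if the value is $0$ then $\mathcal{J}_+-\mathcal{J}_-=0$. Revisiting (2), this forces the $\Omega\times\Omega$ term $\iint_{\Omega\times\Omega}|u(x)-u(y)|^p\mathbf{k}^+=0$, and $\mathbf{k}^+>0$ there. So $u$ is constant on $\Omega$ and zero outside, hence $u=0$; alternatively, the cross term $\int_\Omega|u|^p(\cdot)$ is strictly positive when $d<e^{-1/sp}$.

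For $|u|$, use $\bigl||u(x)|-|u(y)|\bigr|\le|u(x)-u(y)|$, with equality whenever $u(x)u(y)\ge0$, in particular on the cross region. Thus $|u|\in W_0^{s+\log,p}$. The difference $\mathcal{J}(u,u)-\mathcal{J}(|u|,|u|)$ is supported in $\Omega\times\Omega$, where the full kernel $\tfrac p2\mathbf{k}^++\tfrac{BC}{2}|z|^{-N-sp}$ is positive. The difference is therefore $\ge0$, with equality iff $u(x)u(y)\ge0$ a.e., i.e. $u$ does not change sign.
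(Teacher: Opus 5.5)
Items (1)--(3), and the comparison $\mathcal{J}_{s+\log,p}(|u|,|u|)\le\mathcal{J}_{s+\log,p}(u,u)$ with its equality case, follow the paper's proof and are correct. Your split of $\Omega^c$ at radius $d=\mathrm{diam}(\Omega)$ gives the same bound as the paper's split at radius $1$. Discard your first display in (2); the ``cleaner'' version after it is the right argument.

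The gap is in the definiteness part of (4). From $\mathcal{J}_+(u,u)-\mathcal{J}_-(u,u)=0$ you correctly deduce that $u\equiv c$ a.e.\ in $\Omega$. But ``constant on $\Omega$ and zero outside, hence $u=0$'' does not follow. The function $c\,\chi_\Omega$ with $c\neq 0$ has vanishing $\Omega\times\Omega$ contribution, and for $sp<1$ and $\Omega$ Lipschitz it is a genuine nonzero element of $W_0^{s+\log,p}(\Omega)$. Only the cross term can exclude it. Your fallback (the cross term is strictly positive when $d<e^{-1/(sp)}$) misses the endpoint $d=e^{-1/(sp)}$, which the statement allows and at which the explicit lower bound from (2) is exactly $0$.

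The fix is to keep the piece you dropped in (2). For $x\in\Omega$ we have $\Omega\subset\mathcal{B}_d(x)$, hence
\[
w(x):=-\int_{\mathbb{R}^N\setminus\Omega}\frac{\ln|x-y|}{|x-y|^{N+sp}}\,dy=\int_{\mathcal{B}_d(x)\setminus\Omega}\frac{-\ln|x-y|}{|x-y|^{N+sp}}\,dy-\frac{\omega_N}{sp}\,d^{-sp}\Big(\ln d+\frac{1}{sp}\Big).
\]
The last term is $\ge 0$. The integral is strictly positive: its integrand is positive on $\mathcal{B}_d(x)$ because $d<1$, and $|\mathcal{B}_d(x)\setminus\Omega|>0$, e.g.\ by the isodiametric inequality $|\Omega|\le|\mathcal{B}_{d/2}(0)|$. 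Therefore $\mathcal{J}_+(u,u)-\mathcal{J}_-(u,u)\ge 2C(N,s,p)\int_\Omega |u|^p\,w\,dx$ with $w>0$ in $\Omega$. Its vanishing forces $u=0$ directly, without passing through constancy on $\Omega$.

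Once patched, your route is better than the paper's. The paper infers $\mathcal{J}_s(u,u)=0$ from $\mathcal{J}_{s+\log,p}(u,u)=0$, which tacitly requires $B(N,s,p)>0$. Arguing through $\mathcal{J}_+-\mathcal{J}_-$ also covers the admissible case $B(N,s,p)=0$.
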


	\begin{proof}
		Let $u \in W_{0}^{s+\log,p}(\Omega)$. Then, we have the following:
		
		\medskip
		
		\noindent\text{(1)} By exploiting the convexity of the function $\tau \mapsto \tau^{p}$ together with the symmetry of the integrand, we get
		\begin{equation*}
		\begin{aligned}
		\mathcal{J}_{-}(u, u) 
		&= C(N, s, p) \iint_{\mathbb{R}^{2N}} 
		\frac{|u(x) - u(y)|^{p}  \left(- \ln |x - y|\right)_{-}}{|x - y|^{N + sp}}  dx  dy \\
		&\leq 2^{p} C(N, s, p) \int_{\mathbb{R}^{N}} |u(x)|^{p} 
		\left( \int_{\mathbb{R}^{N}} 
		\frac{\left(- \ln |x - y|\right)_{+}}{|x - y|^{N + sp}}  dy \right) dx.
		\end{aligned}
		\end{equation*}
		Computing the inner integral yields
		\begin{equation*}
		\int_{\mathbb{R}^{N}} 
		\frac{\left(- \ln |x - y|\right)_{-}}{|x - y|^{N + sp}}  dy = \frac{\omega_N}{(sp)^2}.
		\end{equation*}
		Consequently, we obtain the estimate
		\begin{equation*}
		\mathcal{J}_{-}(u, u) 
		\leq \frac{2^{p} \omega_N}{(sp)^2}  C(N, s, p) \left\| u\right\| ^{p}_{L^{p}(\Omega)}.
		\end{equation*}
		\text{(2)} Since $u \in W_{0}^{s+\log,p}(\Omega)$, by the definitions of $\mathcal{J}_{\pm}$ we have
		\begin{equation*}
		\begin{aligned}
		\mathcal{J}_{+}(u, u) - \mathcal{J}_{-}(u, u) 
		&= - C(N, s, p) \iint_{\mathbb{R}^{2N}} 
		\frac{|u(x) - u(y)|^{p}  \ln |x - y|}{|x - y|^{N + sp}}  dx  dy \\[2mm]
		&= \underbrace{- C(N, s, p)\iint_{\Omega \times \Omega}
			\frac{|u(x) - u(y)|^{p}  \ln |x - y|}{|x - y|^{N + sp}}  dx  dy}_{\textbf{I}_1} \\[1mm]
		&\quad \underbrace{- 2 C(N, s, p) \int_{\Omega} |u(x)|^{p} \left( \int_{\mathbb{R}^{N} \setminus \Omega} 
			\frac{\ln |x - y|}{|x - y|^{N + sp}}  dy \right) dx}_{\textbf{I}_2}.
		\end{aligned}
		\end{equation*}
		
		\noindent
		Clearly, $\boldsymbol{I}_1$ is non-negative, since $\mathrm{diam}(\Omega) < 1$. We now turn to estimating the term $\boldsymbol{I_2}$. Indeed, for a fixed $x \in \Omega$, the inner integral can be decomposed as
		\begin{equation*}
		\begin{aligned}
		- \int_{\mathbb{R}^{N} \setminus \Omega} 
		\frac{\ln |x - y|}{|x - y|^{N + sp}}  dy 
		&= - \int_{(\mathbb{R}^{N} \setminus \Omega) \cap \mathcal{B}_{1}(x)} 
		\frac{\ln |x - y|}{|x - y|^{N + sp}}  dy 
		- \int_{(\mathbb{R}^{N} \setminus \Omega) \cap (\mathbb{R}^{N} \setminus \mathcal{B}_{1}(x))} 
		\frac{\ln |x - y|}{|x - y|^{N + sp}}  dy\\
		&\geq - \int_{(\mathbb{R}^{N} \setminus \mathcal{B}_{\mathrm{diam}(\Omega)}(x)) \cap \mathcal{B}_{1}(x)} 
		\frac{\ln |x - y|}{|x - y|^{N + sp}}  dy 
		- \int_{\mathbb{R}^{N} \setminus \mathcal{B}_{1}(x)} 
		\frac{\ln |x - y|}{|x - y|^{N + sp}}  dy\\
		&= -\frac{\omega_N}{sp}  (\mathrm{diam}(\Omega))^{-sp} \left( \ln (\mathrm{diam}(\Omega)) + \frac{1}{sp} \right).
		\end{aligned}
		\end{equation*}
		
		\noindent
		Consequently, we obtain
		\begin{equation*}
		\boldsymbol{I}_2 \geq -\frac{2  \omega_N}{sp} C(N, s, p) (\mathrm{diam}(\Omega))^{-sp} 
		\left( \ln (\mathrm{diam}(\Omega)) + \frac{1}{sp} \right) \| u\|_{L^{p}(\Omega)}^p,
		\end{equation*}
		which implies
		\begin{equation*}
		\mathcal{J}_{+}(u, u) - \mathcal{J}_{-}(u, u) 
		\geq -\frac{2  \omega_N}{sp} C(N, s, p) (\mathrm{diam}(\Omega))^{-sp} 
		\left( \ln (\mathrm{diam}(\Omega)) + \frac{1}{sp} \right) \| u\|_{L^{p}(\Omega)}^p.
		\end{equation*}
		It is easy to verify that the preceding estimate is non-negative whenever  $ \mathrm{diam}(\Omega) \leq e^{-\frac{1}{sp}}. $\\[4pt]
		\text{(3)} We follow the same argument as in \cite[Lemma 3.1]{Chen-Chen-Hauer}. In fact, we obtain the following decomposition:
		\begin{equation*}
		\begin{aligned}
		\mathcal{J}_{s}(u, u)  
		&= \underbrace{\iint_{\{|x-y| < r\}} 
			\frac{|u(x) - u(y)|^{p}}
			{|x - y|^{N + sp}} dx dy}_{\mathbf{I}_{1}}  
		+ \underbrace{\iint_{\{|x-y| \geq r\}} 
			\frac{|u(x) - u(y)|^{p}}
			{|x - y|^{N + sp}} dx dy}_{\mathbf{I}_{2}}.
		\end{aligned}
		\end{equation*}
		
		\noindent
		\textbf{Estimate of \(\mathbf{I}_{1}\).}  
		Observe that for \(|x-y| < r < 1\), one has
		\[
		(-\ln |x-y|)_{+} = -\ln |x-y| \geq -\ln r.
		\]
		Hence, it follows that
		\begin{equation*}
		\begin{aligned}
		\mathbf{I}_{1} 
		&\leq - \frac{1}{\ln r}
		\iint_{\{|x-y| < r\}} 
		|u(x) - u(y)|^{p} 
		\frac{(-\ln |x-y|)_{+}}{|x - y|^{N + sp}} dx dy \\
		&= - \frac{1}{C(N,s,p) \ln r}
		\iint_{\{|x-y| < r\}} 
		|u(x) - u(y)|^{p} 
		\mathbf{k}^{s+\log,+}_{p}(x-y) dx dy \\
		&= - \frac{1}{C(N,s,p) \ln r} [u]^{p}_{s+\log,p}.
		\end{aligned}
		\end{equation*}
		
		\noindent
		\textbf{Estimate of \(\mathbf{I}_{2}\).}  
		Using once again the convexity of the mapping \(\tau \mapsto \tau^{p}\) together with the symmetry, we get
		\begin{equation*}
		\begin{aligned}
		\mathbf{I}_{2} 
		&\leq 2^{p-1} \iint_{\{|x-y| \geq r\}} 
		\frac{|u(x)|^{p} + |u(y)|^{p}}
		{|x - y|^{N + sp}} dx dy \\
		&= 2^{p} \int_{\mathbb{R}^{N}} |u(x)|^{p}
		\left( \int_{\{|x-y| \geq r\}} \frac{dy}{|x - y|^{N+sp}} \right) dx \\
		&= \frac{2^{p} \omega_N}{sp} r^{-sp} \|u\|^{p}_{L^{p}(\Omega)}.
		\end{aligned}
		\end{equation*}
		Consequently, we conclude that
		\begin{equation*}
		\begin{aligned}
		\mathcal{J}_{s}(u, u)  
		& \leq - \frac{1}{C(N,s,p) \ln r} [u]^{p}_{s+\log,p}
		+ \frac{2^{p} \omega_N}{sp} r^{-sp} \|u\|^{p}_{L^{p}(\Omega)}.
		\end{aligned}
		\end{equation*}
		\text{(4)} Observing that $\mathrm{diam}(\Omega) \leq e^{-\frac{1}{sp}}$, it follows directly from \text{(2)} that  $ \mathcal{J}_{+}(u, u) - \mathcal{J}_{-}(u, u) > 0. $ Moreover, recalling the definition of the full fractional energy and noting that $B(N, s, p) \geq 0$, we deduce
		\[
		\mathcal{J}_{s+\log, p}(u) \geq 0.
		\] 
		Assuming that $\mathcal{J}_{s+\log, p}(u, u) = 0$, we then infer that
		\begin{equation*}
		\mathcal{J}_{s}(u, u)
		= \iint_{\mathbb{R}^{2N}}
		\frac{|u(x) - u(y)|^{p}}{|x - y|^{N + sp}} dx dy = 0.
		\end{equation*}
		Since $u \in W_{0}^{s+\log,p}(\Omega)$, it immediately follows that $u = 0$ in $\Omega$. Moreover, we have
		\begin{equation*}
		\begin{aligned}
		\mathcal{J}_{+}(u, u) - \mathcal{J}_{-}(u, u) 
		&= - C(N, s, p) \iint_{\mathbb{R}^{2N}} 
		\frac{|u(x) - u(y)|^{p}  \ln |x - y|}{|x - y|^{N + sp}}  dx  dy \\[4pt]
		&= - C(N, s, p) \iint_{\Omega \times \Omega} 
		\frac{|u(x) - u(y)|^{p}  \ln |x - y|}{|x - y|^{N + sp}}  dx  dy \\[4pt]
		&\quad - 2 C(N, s, p) \int_{\Omega} |u(x)|^{p} \left( \int_{\mathbb{R}^{N} \setminus \Omega} 
		\frac{\ln |x - y|}{|x - y|^{N + sp}}  dy \right) dx \\[4pt]
		&\geq -C(N, s, p) \iint_{\Omega \times \Omega} 
		\frac{||u(x)| - |u(y)||^{p}  \ln |x - y|}{|x - y|^{N + sp}}  dx  dy \\[4pt]
		&\quad - 2 C(N, s, p) \int_{\Omega} |u(x)|^{p} \left( \int_{\mathbb{R}^{N} \setminus \Omega} 
		\frac{\ln |x - y|}{|x - y|^{N + sp}}  dy \right) dx \\[4pt]
		&= - C(N, s, p) \iint_{\mathbb{R}^{N} \times \mathbb{R}^{N}} 
		\frac{||u(x)| - |u(y)||^{p}  \ln |x - y|}{|x - y|^{N + sp}}  dx  dy \\[4pt]
		&= \mathcal{J}_{+}(|u|, |u|) - \mathcal{J}_{-}(|u|, |u|).
		\end{aligned}
		\end{equation*}
		Consequently, it follows that
		\begin{equation*}
		\begin{aligned}
		0 \leq \mathcal{J}_{s+\log, p}(|u|, |u|) 
		&= \dfrac{p}{2}\left(\mathcal{J}_{+}(|u|, |u|) - \mathcal{J}_{-}(|u|, |u|) \right) 
		+ \frac{B(N, s, p) C(N, s, p)}{2} \mathcal{J}_{s}(|u|, |u|) \\[4pt]
		&\leq \dfrac{p}{2}\left( \mathcal{J}_{+}(u, u) - \mathcal{J}_{-}(u, u)\right)  
		+ \frac{B(N, s, p) C(N, s, p)}{2} \mathcal{J}_{s}(u, u) =  \mathcal{J}_{s+\log, p}(u, u).
		\end{aligned}
		\end{equation*}
		We remark that the last equality holds if and only if $u$ does not change sign.
\end{proof}

\begin{remark}
		In view of Proposition \ref{proposition1} (1) and (3), the energy functional $ \mathcal{J}_{s+\log, p}(u, u) $ associated with the fractional logarithmic $p$-Laplacian is well-defined on $W_{0}^{s+\log,p}(\Omega)$.
\end{remark}
We show that the space $W_{0}^{s+\log,p}(\Omega)$ can be viewed as an intermediate space between $W_{0}^{s+\varepsilon,p}(\Omega)$ and $W_{0}^{s,p}(\Omega)$ for every $0 < \varepsilon < 1 - s$. More precisely, we have
\begin{proposition}
	\label{proposition4}
	Let $1 < p < \infty$ and $0 < s < 1$. Then, for every $0 < \varepsilon < 1 - s$, the following continuous embeddings hold:
	\begin{equation*}
	W_{0}^{s+\varepsilon,p}(\Omega) \hookrightarrow W_{0}^{s+\log,p}(\Omega) \hookrightarrow W_{0}^{s,p}(\Omega).
	\end{equation*}
\end{proposition}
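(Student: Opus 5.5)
We will prove the two embeddings separately. Both reduce to comparing the kernels pointwise on the region $|x-y|<1$ and controlling the region $|x-y|\ge 1$ by the $L^p$ norm. Throughout we use that every $u$ in the spaces under consideration vanishes outside the bounded Lipschitz domain $\Omega$. In this setting all the norms involved are equivalent to their seminorms, by the fractional Poincaré inequality recalled in Remark~\ref{remark1} and the Poincaré inequality mentioned after the definition of $W_0^{s+\log,p}(\Omega)$.

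\emph{First embedding.} Let $u\in W_0^{s+\varepsilon,p}(\Omega)$. The positive kernel $\mathbf{k}^{s+\log,+}_p$ is supported in $\{|z|<1\}$. On this set we use the elementary inequality $-\ln t\le \frac{1}{e\varepsilon p}\, t^{-\varepsilon p}$ for $t\in(0,1)$, which follows from maximizing $t^{\varepsilon p}(-\ln t)$. It gives the pointwise bound
\[
\mathbf{k}^{s+\log,+}_p(z)\le \frac{C(N,s,p)}{e\varepsilon p}\,\frac{1}{|z|^{N+(s+\varepsilon)p}},\qquad z\neq 0 .
\]
Integrating against $|u(x)-u(y)|^p$, we obtain
\[
[u]^p_{s+\log,p}\le \frac{C(N,s,p)}{e\varepsilon p}\,[u]^p_{s+\varepsilon,p}.
\]
The $L^p$ part of the norm~\eqref{equ12} is controlled by $\|u\|_{W^{s+\varepsilon,p}(\mathbb{R}^N)}$. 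Hence $\|u\|_{W^{s+\log,p}(\mathbb{R}^N)}\le C\|u\|_{W^{s+\varepsilon,p}(\mathbb{R}^N)}$. The vanishing condition outside $\Omega$ is preserved, so the first inclusion is continuous.

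\emph{Second embedding.} Let $u\in W_0^{s+\log,p}(\Omega)$. We apply Proposition~\ref{proposition1}(3) with a fixed radius, say $r=e^{-1}$. This gives
\[
[u]_{s,p}^p=\mathcal{J}_s(u,u)\le \frac{1}{C(N,s,p)}\,[u]^p_{s+\log,p}+\frac{2^p\omega_N}{sp}\,e^{sp}\,\|u\|^p_{L^p(\Omega)} .
\]
The right-hand side is bounded by a constant times $\|u\|^p_{W^{s+\log,p}(\mathbb{R}^N)}$. Therefore $u\in W^{s,p}(\mathbb{R}^N)$, and
\[
\|u\|_{W^{s,p}(\mathbb{R}^N)}\le C\|u\|_{W^{s+\log,p}(\mathbb{R}^N)}.
\]
If one prefers to use only the seminorm $[\cdot]_{s+\log,p}$ on the right, we invoke the Poincaré inequality (Proposition~\ref{proposition3}) to absorb the $L^p$ term.

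\emph{Main point to check.} The only delicate issue is that Proposition~\ref{proposition1}(3) is stated for $u\in W_0^{s+\log,p}(\Omega)$. Its proof is a direct splitting of the integration domain into $\{|x-y|<r\}$ and $\{|x-y|\ge r\}$, and uses no a priori finiteness of $[u]_{s,p}$. So the bound indeed shows that $[u]_{s,p}<\infty$. The remaining steps are routine.
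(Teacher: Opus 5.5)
Your proof is correct. The second embedding is exactly the paper's argument: Proposition~\ref{proposition1}(3) at a fixed radius, followed by the Poincar\'e inequality of Proposition~\ref{proposition3} if you want only the seminorm on the right.

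The first embedding uses the same underlying idea as the paper, namely comparing the logarithmic kernel with $|z|^{-N-(s+\varepsilon)p}$ near the origin, but you execute it differently. The paper takes the comparison constant equal to $1$. This forces it to restrict to $|x-y|<r_\varepsilon$ for some small $r_\varepsilon$ and to treat the region $r_\varepsilon\le|x-y|<1$ separately by convexity. That produces an $L^p$ term, which must then be absorbed either by the full $W^{s+\varepsilon,p}$ norm or by the fractional Poincar\'e inequality.

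You keep the sharp constant in $-\ln t\le (e\varepsilon p)^{-1}t^{-\varepsilon p}$, which is valid on all of $(0,1)$, so the comparison holds on the whole support of $\mathbf{k}^{s+\log,+}_p$ at once. This gives the direct seminorm estimate $[u]_{s+\log,p}^p\le \frac{C(N,s,p)}{e\varepsilon p}[u]_{s+\varepsilon,p}^p$ with an explicit constant. It holds for every $u\in W^{s+\varepsilon,p}(\mathbb{R}^N)$, not only those vanishing outside $\Omega$, and needs neither the splitting nor any Poincar\'e inequality. The paper's route reaches the same conclusion with somewhat more bookkeeping. Your check that Proposition~\ref{proposition1}(3) needs no a priori finiteness of $[u]_{s,p}$ is also correct, since its proof only splits a nonnegative integral.
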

\begin{proof}
On the one hand, it follows from Proposition~\ref{proposition1} (3), together with the Poincar\'e inequality (see Proposition~\ref{proposition3} below), that $ W^{s+\log}_{0}(\Omega) \hookrightarrow W^{s,p}_{0}(\Omega). $ On the other hand, we argue in the same spirit as in~\cite[Proposition~1.6]{Chen-Chen-Hauer}. For any $0 < \epsilon < 1 - s$, there exists $0 < r_{\epsilon} < e^{-\frac{1}{ps}}$ such that
\[
|x - y|^{-N - sp} \bigl( - p \ln |x - y| \bigr)_{+}
\leq
|x - y|^{- N - p(s + \epsilon)}
\quad \text{for all } 0 < |x - y| < r_{\epsilon}.
\]
Consequently, a direct computation yields
\[
\iint_{\{ |x - y| < r_{\epsilon} \}}
\dfrac{|u(x) - u(y)|^{p}}{|x - y|^{N+sp}}
\bigl( - p \ln |x - y| \bigr)_{+}  dx  dy
\leq
\iint_{\{ |x - y| < r_{\epsilon} \}}
\dfrac{|u(x) - u(y)|^{p}}{|x - y|^{N + p(s+\epsilon)}}  dx  dy
\leq
\|u\|^{p}_{W^{s+\epsilon}_{0}(\Omega)}.
\]
Now, by exploiting the convexity of the mapping $\tau \mapsto \tau^{p}$, together with symmetry, we obtain
\begin{align*}
\iint_{\{ |x - y| > r_{\epsilon} \}}
\dfrac{|u(x) - u(y)|^{p}}{|x - y|^{N+sp}}
\bigl( - p \ln |x - y| \bigr)_{+}  dx  dy
& \leq
 2^{p} \int_{\mathbb{R}^{N}} |u(x)|^{p}
\left(
\int_{\mathbb{R}^{N} \setminus \mathcal{B}_{r_{\epsilon}}(x)}
\dfrac{\bigl( - \ln |x - y| \bigr)_{+}}{|x - y|^{N+sp}}  dy
\right) dx \\[4pt]
& =2^{p} 
\omega_{N} \left[
\frac{r_{\epsilon}^{-sp}}{sp} (-\ln r_{\epsilon})
+ \frac{1 - r_{\epsilon}^{-sp}}{(sp)^2}
\right] \|u\|_{L^{p}(\Omega)}^{p}.
\end{align*}
Combining the above estimates, we deduce the existence of a constant $C_{\epsilon, p, s} > 0$ such that
\begin{align*}
\|u\|^{p}_{W_{0}^{s+\log,p}(\Omega)}
&= \dfrac{C(N, s, p)}{p}
\iint_{\{ |x - y| < r_{\epsilon} \}}
\dfrac{|u(x) - u(y)|^{p}}{|x - y|^{N+sp}}
\bigl( - p \ln |x - y| \bigr)_{+}  dx  dy \\[4pt]
&\quad + C(N, s, p) 
\iint_{\{ |x - y| > r_{\epsilon} \}}
\dfrac{|u(x) - u(y)|^{p}}{|x - y|^{N+sp}}
\bigl( - \ln |x - y| \bigr)_{+}  dx  dy \leq C_{\epsilon, p, s} \|u\|^{p}_{W^{s+\epsilon, p}_{0}(\Omega)}.
\end{align*}
In particular, for any $0 < \epsilon < 1 - s$, it follows that $ W^{s+\epsilon, p}_{0}(\Omega) \hookrightarrow W^{s+\log}_{0}(\Omega). $
\end{proof}

\section{Fundamental functional inequalities and the Pohozaev identity}
\label{section3}
In this section, we establish several fundamental functional inequalities, namely a Poincar\'e-type inequality, a boundary Hardy-type inequality, a D\'{\i}az-Saa inequality, and a Pohozaev identity. These results constitute the main analytical framework for the fractional logarithmic Sobolev space $W_0^{s+\log, p}(\Omega)$.

\subsection{Poincar\'e-type inequality}
We first derive a Poincar\'e inequality associated with the fractional logarithmic $p$-Laplacian under consideration. More precisely, we obtain the following result:

\begin{proposition}
	\label{proposition3}
	Let $1 < p < \infty$ and $0 < s < 1$. Assume that $u \in W^{s+\log,p}_{0}(\Omega)$. Then the estimate holds:
	\begin{equation}
	\label{equ13}
	\|u\|_{L^{p}(\Omega)}^{p}
	\leq C \iint_{\mathbb{R}^{2N}} |u(x)-u(y)|^{p} \mathbf{k}^{s+\log,+}_{p}(x-y) dx dy,
	\end{equation}
	where $C = C(s,p,N,\Omega)$ is a positive constant given by
	\[
	C =
	\frac{N\big(\operatorname{diam}(\Omega)\big)^{N+sp}}
	{C(N,s,p) \omega_N a^N\left(1- N  \ln a\right)},
	\]
with $a=\min\big(\mathrm{diam}(\Omega), 1\big)$, where $C(N,s,p)$ denotes the fractional normalization constant defined in \eqref{equ4}.
\end{proposition}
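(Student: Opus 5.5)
Since $u=0$ outside $\Omega$, the plan is to keep only the part of the double integral where $x\in\Omega$ and $y$ lies outside $\Omega$ but close to $x$, so that $|u(x)-u(y)|^p=|u(x)|^p$ and the logarithmic weight is positive. Concretely, restrict the integral in \eqref{equ13} to $x\in\Omega$ and $y\in\mathbb{R}^N\setminus\Omega$. This gives
\[
\iint_{\mathbb{R}^{2N}} |u(x)-u(y)|^{p}\mathbf{k}^{s+\log,+}_{p}(x-y)\,dx\,dy \;\ge\; C(N,s,p)\int_{\Omega}|u(x)|^p\,\Phi(x)\,dx,
\qquad
\Phi(x):=\int_{\mathbb{R}^N\setminus\Omega}\frac{(-\ln|x-y|)_+}{|x-y|^{N+sp}}\,dy .
\]
It then suffices to prove a lower bound $\Phi(x)\ge c_0>0$ that is uniform in $x\in\Omega$, with $c_0$ equal to the reciprocal of the claimed constant times $C(N,s,p)^{-1}$.

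To bound $\Phi$ from below, use $|x-y|\le D:=\mathrm{diam}(\Omega)$ for $x\in\Omega$ and $y$ in the relevant region, so that $|x-y|^{-N-sp}\ge D^{-N-sp}$. This leaves $\Phi(x)\ge D^{-N-sp}\int_{A_x}(-\ln|x-y|)_+\,dy$ for a suitable region $A_x\subset \mathcal{B}_a(x)\setminus\Omega$. The natural choice is $A_x=\mathcal{B}_a(x)\setminus\Omega$ with $a=\min(D,1)$. Here $a\le 1$ makes the logarithm nonnegative, and $a\le D$ keeps $|x-y|\le D$ so the kernel bound applies.

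For the log integral, the worst case is when $\Omega\cap\mathcal{B}_a(x)$ occupies the region near $x$, where the weight $-\ln|x-y|$ is largest. The key bound is therefore a measure-rearrangement (bathtub) estimate: since $-\ln|x-y|$ is radially decreasing, the integral over $\mathcal{B}_a(x)\setminus\Omega$ is at least the integral over the annulus $\{\rho<|x-y|<a\}$ having the same measure. Computing this in polar coordinates gives
\[
\int_{\mathcal{B}_a(x)}(-\ln|z|)\,dz=\frac{\omega_N a^N}{N^2}\bigl(1-N\ln a\bigr).
\]
This matches the shape $\omega_N a^N(1-N\ln a)/N$ appearing in $C$, up to a factor $N$.

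This measure step is the main obstacle. A crude bound using $|\Omega|$ does not obviously produce the clean constant stated, and one may have to accept a weaker or different constant, for instance by translating the ball or by enlarging to $\mathcal{B}_{a}$ of a point at distance $D$ outside. What I would try first is to pick, for each $x$, a ball of radius $a$ contained in $\mathbb{R}^N\setminus\Omega$ within distance about $D$ of $x$, and then integrate the log weight over it. Failing that, I would bound the kernel below by $D^{-N-sp}$ over a full ball of radius $a$ disjoint from $\Omega$, which gives a constant of the stated form up to a harmless power. Combining this with the reduction above and dividing through yields \eqref{equ13}.
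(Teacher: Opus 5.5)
Your reduction to a uniform lower bound $\Phi\ge c_0>0$ on $\Omega$ is where the proposal breaks, and the problem is not only about constants. The kernel $\mathbf{k}^{s+\log,+}_{p}$ vanishes for $|z|\ge 1$. So any $x\in\Omega$ with $d(x)\ge 1$ has $|x-y|\ge 1$ for all $y\in\mathbb{R}^N\setminus\Omega$, hence $\Phi(x)=0$; such points exist as soon as $\Omega$ contains a unit ball. For $u$ supported in $\{d\ge 1\}$ the $\Omega\times(\mathbb{R}^N\setminus\Omega)$ part of the energy is zero. Hence no choice of $A_x$ can work, and the $\Omega\times\Omega$ interactions you discard in the first step are indispensable.

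Your fallbacks fail for the same reasons:
\begin{itemize}
\item For $D\ge 1$, a ball in the complement at distance about $D$ from $x$ carries zero weight.
\item For $D<1$, a ball of radius $D$ disjoint from $\Omega$ cannot lie in $\overline{\mathcal{B}_D(x)}$: the only radius-$D$ ball there is $\mathcal{B}_D(x)$, which contains $x$. So the bound $|x-y|^{-N-sp}\ge D^{-N-sp}$ is not available on it.
\end{itemize}

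What the killing-measure idea does give is a Poincar\'e inequality with a \emph{different} constant when $D<1$. Since $\mathbb{R}^N\setminus\Omega\supset\mathbb{R}^N\setminus\mathcal{B}_D(x)$ (the observation used in Proposition~\ref{proposition1}(2)), one gets $\Phi(x)\ge\omega_N\int_D^1\rho^{-1-sp}(-\ln\rho)\,d\rho>0$. For general $\Omega$ you need an interior argument. For example, use $\mathbf{k}^{s+\log,+}_{p}(z)\ge C(N,s,p)(-\ln r)\,r^{-N-sp}$ for $|z|<r<1$, together with a chaining (layer-by-layer from the boundary) Poincar\'e inequality for kernels bounded below on a small ball.

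You were right not to force the stated constant. The paper obtains it in two steps, both flawed:
\begin{itemize}
\item It writes $C(N,s,p)|u(x)|^p(-\ln|x-y|)_+\le(\mathrm{diam}(\Omega))^{N+sp}|u(x)-u(y)|^p\,\mathbf{k}^{s+\log,+}_{p}(x-y)$ for $y$ with $u(y)=0$. It then integrates this over all of $\mathcal{B}_{\mathrm{diam}(\Omega)}(x)$, tacitly taking $u=0$ on the whole ball, which is exactly the step you declined to take.
\item It records $\int_{\mathcal{B}_a(0)}(-\ln|z|)\,dz$ as $\omega_N a^N(\frac{1}{N}-\ln a)$, whereas your value $\frac{\omega_N a^N}{N^2}(1-N\ln a)$ is the correct one.
\end{itemize}

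In fact the constant as stated is false in general. Take $N=1$, $p=2$, $0<s<\frac{1}{2}$, and $\Omega=(0,L)$ with $L>1$. Then $a=1$, $\omega_1=2$, and the claimed constant is $L^{1+2s}/(2C(1,s,2))$. Let $u=\sin(\pi x/L)$ on $(0,L)$ and $u=0$ elsewhere. Since $u$ is $\frac{\pi}{L}$-Lipschitz on $\mathbb{R}$ and only $x\in(-1,L+1)$ contribute,
\[
[u]^2_{s+\log,2}\le C(1,s,2)\,\frac{\pi^2(L+2)}{L^2}\int_{-1}^{1}|z|^{1-2s}(-\ln|z|)\,dz
= C(1,s,2)\,\frac{\pi^2(L+2)}{2(1-s)^2L^2}.
\]
So the right-hand side of \eqref{equ13} is $O(L^{2s})$, while $\|u\|_{L^2(\Omega)}^2=L/2$, which is larger for $L$ large. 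The same scaling argument gives a counterexample whenever $N+sp<p$. The realistic target is therefore a Poincar\'e inequality with some constant $C(N,s,p,\Omega)$, and for that the interior argument above is the missing ingredient.
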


\begin{proof}
Let $x \in \Omega$ be fixed, and let $y \in \mathcal{B}_{\mathrm{diam}(\Omega)}(x)$ be such that $u(y)=0$. Consequently, we obtain
		\begin{align*}
		C(N,s,p) |u(x)|^{p} (-\ln|x-y|)_{+}
		&= |u(x)-u(y)|^{p} 
		\frac{C(N,s,p) (-\ln|x-y|)_{+}}{|x-y|^{N+sp}} 
		|x-y|^{N+sp} \\
		&\leq (\mathrm{diam}(\Omega))^{N + sp} |u(x)-u(y)|^{p} 
		\frac{C(N,s,p) (-\ln|x-y|)_{+}}{|x-y|^{N+sp}}.
		\end{align*}
		Integrating with respect to $y$ over $\mathcal{B}_{\mathrm{diam}(\Omega)}(x)$ yields
		\begin{align*}
		C(N,s,p) &\omega_{N} a^N\left(-\ln a+\frac{1}{N}\right) |u(x)|^{p}\\
		&\leq (\mathrm{diam}(\Omega))^{N + sp}
		\int_{\mathcal{B}_{\mathrm{diam}(\Omega)}(x)}
		|u(x)-u(y)|^{p} 
		\frac{C(N,s,p) (-\ln|x-y|)_{+}}{|x-y|^{N+sp}} dy,
		\end{align*}
		where $a=\min\big(\mathrm{diam}(\Omega), 1\big)$. Finally, integrating over $\Omega$ with respect to $x$, we deduce
		\begin{align*}
		\int_{\Omega} |u(x)|^{p} dx
		\leq C(s,p,N,\Omega)
		\int_{\Omega}
		\int_{\mathcal{B}_{\mathrm{diam}(\Omega)}(x)}
		|u(x)-u(y)|^{p} 
		\frac{C(N,s,p) (-\ln|x-y|)_{+}}{|x-y|^{N+sp}} dy dx,
		\end{align*}
		where
		\[
		C(s,p,N,\Omega)
		=
		\frac{N \big(\operatorname{diam}(\Omega)\big)^{N+sp}}
		{C(N,s,p) \omega_N a^N\left(1- N  \ln a\right)}.
		\]
	\end{proof}

As a consequence of Proposition~\ref{proposition3} (Poincar\'e-type inequality), we establish a fractional Sobolev inequality within our functional framework. The presence of the logarithmic singularity at the origin implies that the fractional logarithmic seminorm yields a stronger control than the classical fractional Gagliardo seminorm (as shown in Proposition \ref{proposition4}). Consequently, we obtain the following critical Sobolev-type inequality.

\begin{theorem}
	\label{TheoremSobolevInequality}
	Let $1 < p < \infty$ and $0 < s < 1$ satisfy $  N > sp $, and let $\Omega \subset \mathbb{R}^N$ be a bounded open set. Then, there exists a constant $C = C(N,s,p,\Omega) > 0$ such that for every $u \in W_0^{s+\log, p}(\Omega)$, the following critical Sobolev-type inequality holds:
	\begin{equation}
	\label{SobolevInequality}
	\|u\|_{L^{p_s^*}(\Omega)}^p \leq C \iint_{\mathbb{R}^{2N}} |u(x)-u(y)|^{p} \mathbf{k}^{s+\log,+}_{p}(x-y) dx dy.
	\end{equation}
\end{theorem}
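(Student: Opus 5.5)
The plan is to chain three facts already available: the classical fractional Sobolev inequality for $W_0^{s,p}(\Omega)$, the interpolation estimate of Proposition~\ref{proposition1}~(3), and the Poincar\'e inequality of Proposition~\ref{proposition3}. Concretely, by Remark~\ref{remark1} (valid since $N>sp$), there is a constant $S=S(N,s,p,\Omega)>0$ with
\[
\|u\|_{L^{p_s^*}(\Omega)}^p \leq S\, [u]_{s,p}^p = S\,\mathcal{J}_s(u,u)
\]
for every $u\in W_0^{s,p}(\Omega)$. Since the embedding $W_0^{s+\log,p}(\Omega)\hookrightarrow W_0^{s,p}(\Omega)$ holds by Proposition~\ref{proposition4}, this applies to every $u\in W_0^{s+\log,p}(\Omega)$.

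Next, I would fix a radius, say $r=1/2$ (any $r\in(0,1)$ works), and invoke Proposition~\ref{proposition1}~(3). This gives
\[
\mathcal{J}_s(u,u)\leq \frac{1}{C(N,s,p)\ln 2}\,[u]_{s+\log,p}^p+\frac{2^{p}\omega_N}{sp}\,2^{sp}\,\|u\|_{L^p(\Omega)}^p .
\]
The $L^p$ term is then absorbed using Proposition~\ref{proposition3}, which yields $\|u\|_{L^p(\Omega)}^p\leq C_P\,[u]_{s+\log,p}^p$ with the explicit constant $C_P=C_P(s,p,N,\Omega)$. Combining the two bounds, $\mathcal{J}_s(u,u)\leq C_1\,[u]_{s+\log,p}^p$ with
\[
C_1=\frac{1}{C(N,s,p)\ln 2}+\frac{2^{p+sp}\omega_N}{sp}\,C_P .
\]
Inserting this into the fractional Sobolev inequality gives \eqref{SobolevInequality} with $C=S\,C_1$.

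The only point requiring care is to make sure that no circularity arises. Proposition~\ref{proposition4} itself used Proposition~\ref{proposition3}, but both precede the present theorem and were proved independently of it, so this is fine.

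I also need the classical Sobolev inequality to hold for arbitrary bounded open $\Omega$, not only Lipschitz ones. This is true because $u$ vanishes outside $\Omega$: the Gagliardo seminorm over $\mathbb{R}^{2N}$ controls $\|u\|_{L^{p_s^*}(\mathbb{R}^N)}$ by the Sobolev inequality on the whole space (Di Nezza et al., Theorem 6.5). Hence $S$ can even be taken independent of $\Omega$, and I expect no genuine obstacle beyond bookkeeping of constants.
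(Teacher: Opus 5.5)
Your proposal is correct and follows the paper's proof essentially verbatim: the classical fractional Sobolev inequality, the interpolation bound of Proposition~\ref{proposition1}~(3), and the Poincar\'e inequality of Proposition~\ref{proposition3} to absorb the $L^{p}$ term. The only cosmetic difference is that you fix $r=1/2$, whereas the paper leaves $r\in(0,1)$ free; your remark that the whole-space Sobolev inequality for compactly supported functions removes any need for boundary regularity is a small but worthwhile extra care, since the paper's Remark~\ref{remark1} is formulated for Lipschitz domains while the theorem is stated for arbitrary bounded open sets.
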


\begin{proof}
	By means of the classical fractional Sobolev inequality (see Remark \ref{remark1}), there exists a positive constant $C_* = C_*(N,s,p) > 0$ such that the fractional Gagliardo seminorm $[\cdot]_{s,p}$ controls the critical Lebesgue norm, in the sense that
		\begin{equation}
		\label{ClassicalSobolev}
		\|u\|_{L^{p_s^*}(\mathbb{R}^N)}^p \leq C_*  [u]_{s,p}^p := C_*  \iint_{\mathbb{R}^{2N}} \frac{|u(x)-u(y)|^p}{|x-y|^{N+sp}}  dx dy.
		\end{equation}
		On the other hand, it follows from Proposition~\ref{proposition1} (3) that, for any $0 < r < 1$, we have
		\begin{equation*}
		\begin{aligned}
		\iint_{\mathbb{R}^{2N}} \frac{|u(x)-u(y)|^p}{|x-y|^{N+sp}}  dx dy 
		& \leq - \frac{1}{C(N,s,p) \ln r} \|u\|^{p}_{W_{0}^{s+\log,p}(\Omega)}  + \frac{2^{p} \omega_N}{sp} r^{-sp} \|u\|^{p}_{L^{p}(\Omega)}.
		\end{aligned}
		\end{equation*}
		Combining this estimate with the Poincar\'e inequality (see Proposition~\ref{proposition3}), we deduce that
		\begin{equation}\label{equ40}
		\begin{aligned}
		\iint_{\mathbb{R}^{2N}} \frac{|u(x)-u(y)|^p}{|x-y|^{N+sp}}  dx dy 
		& \leq \left( - \frac{1}{C(N,s,p) \ln r}
		+ \frac{2^{p} \omega_N}{sp} r^{-sp} C(s,p,N,\Omega)\right)
		\|u\|^{p}_{W_{0}^{s+\log,p}(\Omega)} .
		\end{aligned}
		\end{equation}
		Finally, coupling \eqref{equ40} with \eqref{ClassicalSobolev}, we obtain the desired estimate. This completes the proof.
\end{proof}

\subsection{Hardy-type inequality}

In this subsection, we establish a logarithmic boundary Hardy inequality. Indeed, we have the following result.

\begin{theorem}
	\label{FractionalLogarithmicHardyInequality}
	Let $1 < p < \infty$ and $0 < s < 1$, and let $\Omega \subset \mathbb{R}^N$ be a bounded Lipschitz domain. Then there exists a constant $C = C(N,s,p,\Omega) > 0$ such that, for every $u \in W_0^{s+\log,p}(\Omega)$, the following inequality holds:
	\begin{equation*}
	\int_\Omega \frac{|u(x)|^p}{d(x)^{sp}} 
	\left( \ln \left(\frac{1}{d(x)}\right) \right)_{+}  dx
	\leq C \iint_{\mathbb{R}^{2N}} |u(x)-u(y)|^{p} 
	\mathbf{k}^{s+\log,+}_{p}(x-y) dx dy.
	\end{equation*}
\end{theorem}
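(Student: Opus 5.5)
We prove the inequality directly from the exterior interaction, in the spirit of the proof of Proposition~\ref{proposition3}, exploiting only that $u=0$ a.e. in $\mathbb{R}^N\setminus\Omega$. Restricting the double integral to $\Omega\times(\mathbb{R}^N\setminus\Omega)$, and using symmetry, gives
\[
\iint_{\mathbb{R}^{2N}} |u(x)-u(y)|^{p}\mathbf{k}^{s+\log,+}_{p}(x-y)\,dx\,dy \ \geq\ 2C(N,s,p)\int_\Omega |u(x)|^p \,\kappa(x)\,dx,
\]
where
\[
\kappa(x):=\int_{\mathbb{R}^N\setminus\Omega}\frac{(-\ln|x-y|)_+}{|x-y|^{N+sp}}\,dy .
\]
It therefore suffices to show that
\[
\kappa(x)\geq c\, d(x)^{-sp}\bigl(\ln(1/d(x))\bigr)_+ \quad\text{for } x\in\Omega,
\]
with $c=c(N,s,p,\Omega)>0$. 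Since the right-hand side vanishes when $d(x)\ge 1$, only points with $d(x)<1$ need to be treated.

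The key step is a uniform exterior density estimate coming from the Lipschitz boundary. Because $\Omega$ is bounded and Lipschitz, it satisfies a uniform exterior cone condition, and hence there exist $\rho_0\in(0,1)$ and $c_0>0$ such that
\[
|\mathcal{B}_{r}(z)\setminus\Omega|\ge c_0 r^N \quad\text{for all } z\in\partial\Omega,\ 0<r\le\rho_0 .
\]

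\textbf{Case $d(x)<\rho_0/4$.} Choose $z\in\partial\Omega$ with $|x-z|=d(x)=:\delta$. Then $\mathcal{B}_{\delta}(z)\subset\mathcal{B}_{2\delta}(x)$, so the set $A:=\mathcal{B}_{\delta}(z)\setminus\Omega$ has measure at least $c_0\delta^N$. For $y\in A$ we have $\delta\le|x-y|\le2\delta<1/2$. Consequently $(-\ln|x-y|)_+\ge\ln\frac{1}{2\delta}$ and $|x-y|^{-N-sp}\ge(2\delta)^{-N-sp}$, which yields
\[
\kappa(x)\ge c_0 2^{-N-sp}\delta^{-sp}\ln\tfrac{1}{2\delta}.
\]
When $\delta$ is small, say $\delta\le1/4$, we have $\ln\frac{1}{2\delta}\ge\frac12\ln\frac1\delta$, which gives the claim in this regime.

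\textbf{Case $\rho_0/4\le d(x)<1$.} Here the right-hand side is bounded by a constant $M=M(\rho_0,s,p)$. For the lower bound we use that $\mathcal{B}_{\rho_0}(z)\setminus\Omega$ lies within distance $d(x)+\rho_0$ of $x$. This is not necessarily below $1$, however, so instead we use the exterior cone at $z$ truncated near $z$: choose the cone piece $\mathcal{B}_{r}(z)\setminus\Omega$ with $r=\min\{\rho_0,(1-d(x))/2\}$. This may degenerate as $d(x)\to1$, but then the target $\ln(1/d)\to0$ as well. It is simpler to note that $\kappa$ is continuous and strictly positive on the compact set $\{\rho_0/4\le d\le 1-\eta\}$, while for $d(x)\in(1-\eta,1)$ we compare $\ln(1/d)\approx 1-d$ with the measure of a cone piece of radius $\sim(1-d)$. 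If this region causes trouble, a cleaner fallback is to bound it instead via Proposition~\ref{proposition3}: on $\{d\ge\rho_0/4\}$ the left integrand is at most $M|u|^p$, and $\|u\|_{L^p}^p\le C[u]^p_{s+\log,p}$.

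I expect the main obstacle to be the geometric density step (the uniform exterior measure estimate near a Lipschitz boundary together with the resulting logarithmic lower bound). Invoking Proposition~\ref{proposition3} for the intermediate-distance region avoids needing any bound on $\kappa$ away from the boundary. Combining both regions gives the inequality with the constant $C=\max\{(2c\,C(N,s,p))^{-1},\,M\,C(s,p,N,\Omega)\}$.
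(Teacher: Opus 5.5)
Your proposal is correct and follows essentially the paper's route. Like the paper, you restrict the seminorm to $\Omega\times(\mathbb{R}^N\setminus\Omega)$, bound the killing measure from below by $c\,d(x)^{-sp}\ln(1/d(x))$ in a boundary strip via the uniform exterior cone condition, and handle $\{d\ge d_0\}$ with the Poincar\'e inequality of Proposition~\ref{proposition3}; your exterior density estimate on $\mathcal{B}_{d(x)}(z)$ is a slightly cleaner substitute for the paper's polar-coordinate integral over a truncated cone.

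Two small corrections. Commit to the Poincar\'e step rather than treating it as a fallback: the pointwise bound $\kappa(x)\ge c\,d(x)^{-sp}(\ln(1/d(x)))_+$ genuinely fails as $d(x)\uparrow 1$ (for $\Omega=\mathcal{B}_2(0)$ one gets $\kappa\lesssim(1-d)^{(N+3)/2}$ against a target of order $1-d$). Also, the two regional constants must be added, not maximized.
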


\begin{proof}
	For any $u \in W_0^{s+\log, p}(\Omega)$, it holds that $u = 0$ almost everywhere in $\Omega^c := \mathbb{R}^N \setminus \Omega$. By decomposing the fractional logarithmic Gagliardo seminorm $[\cdot]_{s+\log, p}$, we obtain that
	\begin{equation}
	\label{HardySplit}
	[u]_{s+\log, p}^p 
	\geq 2 \int_\Omega |u(x)|^p \left( \int_{\Omega^c} \mathbf{k}^{s+\log,+}_{p}(x-y) dy \right) dx 
	=: 2 \int_\Omega |u(x)|^p \kappa_\Omega(x) dx,
	\end{equation}
	where $\kappa_\Omega(x)$ denotes the regional killing measure. In order to derive the desired inequality, it is sufficient to establish a suitable lower bound for $\kappa_\Omega(x)$ in a neighborhood of $\partial \Omega$. Since $\Omega$ has a Lipschitz boundary, it satisfies the uniform exterior cone condition. Consequently, there exist constants $\theta \in (0, \pi/2)$ and $h \in (0, e^{-1/p})$ such that, for every $x_0 \in \partial \Omega$, one can associate an exterior cone $\mathcal{C}_{x_0} \subset \Omega^c$ with vertex at $x_0$, height $h$, and opening angle $\theta$. Now, let $x \in \Omega$ with $d(x) < h/4$, and choose $x_0 \in \partial\Omega$ such that $|x-x_0| = d(x)$. Considering the truncated cone $\widetilde{\mathcal{C}}_{x_0} := \mathcal{C}_{x_0} \setminus B_{2d(x)}(x_0)$, any $y \in \widetilde{\mathcal{C}}_{x_0}$ satisfies $|y-x_0| \geq 2d(x)$. By the triangle inequality, $\frac{1}{2}|y-x_0| \leq |y-x| \leq \frac{3}{2}|y-x_0|$. Passing to polar coordinates centered at $x$, the angular measure of the intersection is bounded from below, giving
	\begin{equation*}
	\kappa_\Omega(x) \geq \int_{\widetilde{\mathcal{C}}_{x_0}} \mathbf{k}^{s+\log,+}_{p}(x-y) dy \geq c_1 p \int_{3d(x)}^h \frac{-\ln r}{r^{1+sp}} dr,
	\end{equation*}
	where $c_1 > 0$ depends only on $N$ and $\theta$. Evaluating the integral via integration by parts, we obtain
	\begin{align*}
	\int_{3d(x)}^h \frac{-\ln r}{r^{1+sp}} dr &= \left[ \frac{r^{-sp}}{sp} (-\ln r) \right]_{3d(x)}^h - \frac{1}{sp} \int_{3d(x)}^h r^{-1-sp} dr \nonumber \\[4pt]
	&= \frac{(3d(x))^{-sp}}{sp} |\ln(3d(x))| - \frac{(3d(x))^{-sp}}{s^2 p^2} + C(h, s, p).
	\end{align*}
	As $d(x) \to 0^+$, the leading logarithmic term strictly dominates. Consequently, there exist a threshold $d_0 \in (0, h/4)$ and a constant $c_2 > 0$ such that for all $x \in \Omega$ satisfying $d(x) < d_0$,
	\begin{equation}
	\label{KappaLowerBound}
	\kappa_\Omega(x) \geq c_2 \frac{\left( \ln(1/d(x))\right) _{+}}{d(x)^{sp}}.
	\end{equation}
	Defining the boundary strip $\Omega_{d_0} := \{x \in \Omega : d(x) < d_0\}$ and substituting \eqref{KappaLowerBound} into \eqref{HardySplit}, we secure the bound near the boundary
	\begin{equation}
	\label{BoundaryStripBound}
	\int_{\Omega_{d_0}} \frac{|u(x)|^p}{d(x)^{sp}} \left( \ln\left(\frac{1}{d(x)}\right)\right) _{+} dx \leq \frac{1}{2 c_2} [u]_{s+\log, p}^p.
	\end{equation}
	For the interior region $\Omega \setminus \Omega_{d_0}$, the distance to the boundary is bounded strictly away from zero ($d(x) \geq d_0$), ensuring the continuous weight function is uniformly bounded: $d(x)^{-sp} \left(\ln(1/d(x))\right) _{+}\leq M(d_0)$. Moreover, by applying the Poincar\'e inequality (see Proposition~\ref{proposition3}), $\|u\|_{L^p(\Omega)}^p \leq C(s,p,N,\Omega) [u]_{s+\log, p}^p$, we derive
	\begin{equation}
	\label{InteriorBound1}
	\int_{\Omega \setminus \Omega_{d_0}} \frac{|u(x)|^p}{d(x)^{sp}} \left(\ln\left(\frac{1}{d(x)}\right)\right) _{+} dx \leq M(d_0) \|u\|_{L^p(\Omega)}^p \leq M(d_0) C(s,p,N,\Omega)  [u]_{s+\log, p}^p.
	\end{equation}
	Summing \eqref{BoundaryStripBound} and \eqref{InteriorBound1} yields the desired global inequality with $C := \frac{1}{2 c_2} + M(d_0) C(s,p,N,\Omega)$.
\end{proof}

\subsection{D\'{\i}az-Saa type inequality}

In this subsection, we present a D\'{\i}az-Saa type inequality associated with the fractional logarithmic $p$-Laplacian $(-\Delta)_{p}^{s+\log}$, which leads to uniqueness results. Precisely, we have
	
	\begin{lemma}\label{Lemma2}
Let $1 < p < \infty$, $1 < r \leq p$, and $0 < s < 1$. Assume that $\operatorname{diam}(\Omega) < e^{\frac{B(N,s,p)}{p}}$, where $B(N,s,p)$ is defined in \eqref{equB}. Then, the following inequality holds in the sense of distributions:
		\begin{equation}\label{equ17}
		\mathcal{J}_{s+\log, p}\left(u,\frac{u^{r}-v^{r}}{u^{r-1}}\right)
		-
		\mathcal{J}_{s+\log, p}\left(v,\frac{v^{r}-u^{r}}{v^{r-1}}\right)
		\geq 0.
		\end{equation}
		This inequality holds for all $u, v \in W^{s+\log,p}_{0}(\Omega)$ such that $u>0$ and $v>0$ a.e. in $\Omega$, and such that $u/v,\; v/u \in L^{\infty}(\Omega)$. Moreover, if equality holds in \eqref{equ17}, then the following assertions are satisfied:
		\begin{itemize}
			\item[(i)] $ \dfrac{u}{v}\equiv c > 0$ a.e. in $\Omega$;\\
			\item[(ii)] if, in addition, $r \neq p$, then $u \equiv v$ a.e. in $\Omega$.
		\end{itemize}
	\end{lemma}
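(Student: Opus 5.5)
The plan is to reduce \eqref{equ17} to a pointwise inequality for the integrand, weighted by the full kernel $\mathbf{K}^{s+\log}_{p}(|x-y|)$, and then use the diameter assumption to control the sign of this kernel. By the definition \eqref{equ15} of $\mathcal{J}_{s+\log,p}$ and the decomposition of $\mathbf{K}^{s+\log}_{p}$, the left-hand side of \eqref{equ17} equals
\[
\frac12\iint_{\mathbb{R}^{2N}} \mathbf{K}^{s+\log}_{p}(|x-y|)\, \Psi(x,y)\,dx\,dy,
\]
where, with $g_p(t)=|t|^{p-2}t$, $\varphi=(u^r-v^r)/u^{r-1}$ and $\eta=(v^r-u^r)/v^{r-1}$,
\[
\Psi(x,y)=g_p(u(x)-u(y))\big(\varphi(x)-\varphi(y)\big)-g_p(v(x)-v(y))\big(\eta(x)-\eta(y)\big).
\]
The assumptions $u,v>0$ in $\Omega$ and $u/v,v/u\in L^\infty(\Omega)$ give $\varphi,\eta\in W_0^{s+\log,p}(\Omega)$. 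Indeed, $\varphi=u-v^r u^{1-r}$, and a standard difference-quotient estimate controls $|\varphi(x)-\varphi(y)|$ by $C(|u(x)-u(y)|+|v(x)-v(y)|)$, extended by $0$ outside $\Omega$. So every integral is finite, using Proposition~\ref{proposition1}~(1),(3).

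Next I split $\mathbb{R}^{2N}$ into $\Omega\times\Omega$, $\Omega\times\Omega^c$ (together with its symmetric counterpart), and $\Omega^c\times\Omega^c$, where the integrand vanishes.

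\textbf{Region $\Omega\times\Omega^c$.} Here $u(y)=v(y)=0$ and $\varphi(y)=\eta(y)=0$, so
\[
\Psi=u(x)^{p-1}\,\frac{u^r-v^r}{u^{r-1}}(x)-v(x)^{p-1}\,\frac{v^r-u^r}{v^{r-1}}(x)=(u^r-v^r)(u^{p-r}-v^{p-r})(x)\ge 0,
\]
since $r\le p$. However, the kernel may be negative there when $|x-y|$ is large. For this reason I will not separate regions by sign. Instead I plan to use directly the hypothesis $\operatorname{diam}(\Omega)<e^{B(N,s,p)/p}$, which says exactly that $\mathbf{K}^{s+\log}_{p}(|x-y|)=C(N,s,p)(B-p\ln|x-y|)|x-y|^{-N-sp}>0$ for $x,y\in\Omega$. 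For the boundary region I will rewrite the contribution as
\[
2\int_\Omega (u^r-v^r)(u^{p-r}-v^{p-r})(x)\Big(\int_{\Omega^c}\mathbf{K}^{s+\log}_{p}(|x-y|)\,dy\Big)dx .
\]
This is the step I expect to be the main obstacle, because the inner integral can be negative. As a first attempt, I would compare the full operator with its action on $\Omega\times\Omega$ only, in the spirit of the cancellation mechanism described in the introduction. If that fails, I would note that the statement is ``in the sense of distributions'' and interpret the pairing with the killing term $\int_{\Omega^c}\mathbf{K}\,dy$ absorbed into the regional form. As a fallback, the hypothesis can be strengthened so that this killing term is nonnegative, namely $B(N,s,p)\,sp\ge p$ adjusted through the explicit formula $\int_{|z|>\rho}\mathbf{K}=C\omega_N\rho^{-sp}\big(\tfrac{B}{sp}-\tfrac{p\ln\rho}{sp}-\tfrac{1}{s^2p}\big)$, with $\rho=d(x)$.

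\textbf{Region $\Omega\times\Omega$.} Here the kernel is positive, and I will apply the classical discrete D\'{\i}az--Saa (generalized Picone) inequality of Brasco--Franzina type. For $a_1,a_2,b_1,b_2>0$,
\[
g_p(a_1-a_2)\Big(\tfrac{a_1^r-b_1^r}{a_1^{r-1}}-\tfrac{a_2^r-b_2^r}{a_2^{r-1}}\Big)+g_p(b_1-b_2)\Big(\tfrac{b_1^r-a_1^r}{b_1^{r-1}}-\tfrac{b_2^r-a_2^r}{b_2^{r-1}}\Big)\ge0 .
\]
This follows from the hidden convexity of $t\mapsto |t^{1/r}(x)-t^{1/r}(y)|^p$ along the curve $\sigma_t=((1-t)u^r+tv^r)^{1/r}$, or equivalently from Picone's inequality $g_p(a_1-a_2)\big(\tfrac{c_1^r}{a_1^{r-1}}-\tfrac{c_2^r}{a_2^{r-1}}\big)\le |c_1-c_2|^{r}|a_1-a_2|^{p-r}$ combined with Young's inequality. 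Integrating this against the positive kernel gives nonnegativity on $\Omega\times\Omega$.

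\textbf{Equality case.} If equality holds in \eqref{equ17}, the integrand must vanish a.e. on $\Omega\times\Omega$ where $\mathbf{K}>0$. Equality in the discrete inequality forces $u(x)/v(x)=u(y)/v(y)$ (strict convexity of $|\cdot|^p$ along $\sigma_t$), which gives (i): $u=cv$. For (ii), vanishing of the boundary term gives $(u^r-v^r)(u^{p-r}-v^{p-r})=0$ on $\Omega$ against a nonzero weight. With $u=cv$ this becomes $(c^r-1)(c^{p-r}-1)=0$, and since $r\neq p$ we get $c=1$. Equivalently, one can insert $u=cv$ into \eqref{equ17}, where the left-hand side becomes $(c^r-1)(c^{p-r}-1)\mathcal{J}_{s+\log,p}(v,v)$ up to a positive factor, and Proposition~\ref{proposition1}~(4) gives $\mathcal{J}_{s+\log,p}(v,v)>0$, so $c=1$.
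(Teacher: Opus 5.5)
The gap is the exterior region, the step you flag and then leave open. For $1<r<p$ the exterior part of your integral is $\int_{\Omega}(u^{r}-v^{r})(u^{p-r}-v^{p-r})\,\kappa\,dx$, where $\kappa(x):=\int_{\mathbb{R}^{N}\setminus\Omega}\mathbf{K}^{s+\log}_{p}(|x-y|)\,dy$. The first factor is nonnegative. But the hypothesis $\operatorname{diam}(\Omega)<e^{B(N,s,p)/p}$ only makes the kernel positive on $\Omega\times\Omega$. It gives no sign for $\kappa$, which contains the whole negative far field $|x-y|>e^{B(N,s,p)/p}$.

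None of your fallbacks repairs this:
\begin{itemize}
\item Restricting to the regional form on $\Omega\times\Omega$ works only when the exterior integrand cancels, i.e.\ for $r=p$.
\item The phrase ``in the sense of distributions'' does not change the pairing.
\item Strengthening the hypothesis proves a different statement. If you do strengthen it, the radius to use is $D=\operatorname{diam}(\Omega)$, not $d(x)$. For $x\in\Omega$ one has $\mathbb{R}^{N}\setminus\mathcal{B}_{D}(x)\subset\mathbb{R}^{N}\setminus\Omega$, and $\mathbf{K}^{s+\log}_{p}>0$ on the rest of $\mathbb{R}^{N}\setminus\Omega$. Hence $\kappa(x)\ge C(N,s,p)\,\omega_N D^{-sp}\bigl(\frac{B(N,s,p)-p\ln D}{sp}-\frac{1}{s^{2}p}\bigr)$, which is nonnegative when $D\le e^{(B(N,s,p)-1/s)/p}$, e.g.\ under the assumptions of Section~\ref{section5}.
\end{itemize}
Your argument for (ii) has the same defect. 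Both the vanishing of the exterior term and the appeal to Proposition~\ref{proposition1}~(4) need more than the lemma assumes.

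For $r=p$, by contrast, your proof is complete. The exterior integrand is $(u^{p}-v^{p})(1-1)\equiv0$. The two-point inequality with its equality case on $\Omega\times\Omega$, where the kernel is positive, then gives \eqref{equ17} and (i). This is the route of the remark following the lemma, not the paper's convexity argument.

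In fact no argument can close the gap, because for $r<p$ the lemma is false under its stated hypothesis. For $u=cv$ with $c\neq1$, the left-hand side (correctly signed, see below) equals $(c^{r}-1)(c^{p-r}-1)\,\mathcal{J}_{s+\log,p}(v,v)$, and $\mathcal{J}_{s+\log,p}(v,v)$ can be negative. Take $s$ small, $\epsilon\in(0,1]$, $\Omega$ a ball of diameter $D=e^{B(N,s,p)/p-\epsilon}$, and $v=\chi_{\Omega}$ (admissible since $sp<1$). Then $\mathcal{J}_{s+\log,p}(v,v)=\int_{\Omega}\kappa\,dx$, and $\kappa$ splits into two parts:
\begin{itemize}
\item The part from $|x-y|\ge D$ equals $C(N,s,p)\,\omega_N D^{-sp}\bigl(\frac{\epsilon}{s}-\frac{1}{s^{2}p}\bigr)$.
\item The part from $(\mathbb{R}^{N}\setminus\Omega)\cap\mathcal{B}_{D}(x)$ is at most $C(N,s,p)\,\omega_N D^{-sp}\,p\,(\epsilon+\ln\frac{1}{t})\,t^{-sp}\ln\frac{1}{t}$, with $t=d(x)/D$. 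Its integral over $\Omega$ is bounded by $M(N,p)\,C(N,s,p)\,\omega_N D^{-sp}|\Omega|$ for $sp\le\frac12$.
\end{itemize}
Hence $\int_{\Omega}\kappa\,dx<0$ once $\frac{1}{s^{2}p}>M+\frac{\epsilon}{s}$.

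The paper's proof via Proposition~\ref{pro3} has the same hole, hidden. It writes the exterior part of $\mathcal{W}(w)$ as linear in $w$. In fact it equals $\frac{1}{p}\int_{\Omega}w^{p/r}\kappa\,dx$, which is linear only for $r=p$ and convex only where $\kappa\ge0$.

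Finally, \eqref{equ17} as printed has a sign typo. The D\'{\i}az--Saa quantity is the \emph{sum} of the two terms; this is what $\Phi'(0)\le\Phi'(1)$ produces and what Theorem~\ref{theorem6} uses. Your exterior computation and your two-point inequality are both for the sum, so define $\Psi$ with a plus sign. With the minus sign the exterior integrand would be $(u^{r}-v^{r})(u^{p-r}+v^{p-r})$. Then $u=cv$ with $c<1$ would violate the printed inequality whenever $\mathcal{J}_{s+\log,p}(v,v)>0$.
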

	
	In order to establish a D\'{\i}az-Saa type inequality, we exploit the convexity properties of the associated energy functional. To this end, let $1 < r \leq p$ and consider the following functional
	\[
	\mathcal{W} : \dot{V}_{+}^{ r} \longrightarrow \mathbb{R}_{+}, 
	\qquad 
	\mathcal{W}(w)
	= \frac{1}{p} \mathcal{J}_{s+\log, p} \left(w^{\frac{1}{r}}, w^{\frac{1}{r}}\right).
	\]
	Here,
	\[
	\dot{V}_{+}^{ r }
	:= \Bigl\{ u : \mathbb{R}^{N} \to (0, \infty) \;\Big|\; u^{\frac{1}{r}} \in W_{0}^{s+\log, p}(\Omega) \Bigr\}.
	\]
We next introduce the following definition.

\begin{definition}\label{definition}
	Let $X$ be a real vector space and let $C \subset X$ be a nonempty convex cone.  
	A functional $\mathcal{W} : C \to \mathbb{R}$ is said to be  ray-strictly convex (respectively,  strictly convex) if
	\[
	\mathcal{W}\big((1-t)u_1 + t u_2\big)
	\leq (1-t)\mathcal{W}(u_1) + t \mathcal{W}(u_2),
	\quad \text{for all } u_1, u_2 \in C,\; t \in (0,1),
	\]
	with strict inequality unless $u_1/u_2 \equiv c > 0$ (respectively, unless $u_1 \equiv u_2$).
\end{definition}
According to the above definition, the functional $\mathcal{W}$ satisfies the following convexity property:
\begin{proposition}\label{pro3}
	Assume that $\operatorname{diam}(\Omega) < e^{\frac{B(N,s,p)}{p}}$. Then the functional $\mathcal{W}$ is ray-strictly convex on $\dot{V}_{+}^{ r}$. Moreover, if $r \neq p$, then $\mathcal{W}$ is strictly convex on $\dot{V}_{+}^{ r}$.
\end{proposition}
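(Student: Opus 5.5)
Write $\mathcal{W}(w)=\frac{1}{2p}\iint_{\mathbb{R}^{2N}}\bigl|w(x)^{1/r}-w(y)^{1/r}\bigr|^{p}\,\mathbf{K}^{s+\log}_{p}(|x-y|)\,dx\,dy$, which is the form $\mathcal{J}_{s+\log,p}(u,u)$ of \eqref{equ15} rewritten with the full kernel \eqref{equ14}. The plan is to prove convexity pointwise in $(x,y)$. This needs two facts: the kernel has the right sign where it matters, and the integrand is a convex function of the pair $(w(x),w(y))$.

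\textbf{Sign of the kernel.} We have $\mathbf{K}^{s+\log}_{p}(\rho)=C(N,s,p)\rho^{-N-sp}\bigl(B(N,s,p)-p\ln\rho\bigr)$. This is positive exactly when $\rho<e^{B(N,s,p)/p}$. So the hypothesis $\operatorname{diam}(\Omega)<e^{B(N,s,p)/p}$ gives $\mathbf{K}^{s+\log}_{p}(|x-y|)>0$ on $\Omega\times\Omega$.

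For the interaction region $\Omega\times\Omega^{c}$ the integrand is $w(x)^{p/r}$. This is convex in $w$ because $p/r\ge1$, and strictly convex when $r<p$. Its coefficient is $\int_{\Omega^{c}}\mathbf{K}^{s+\log}_{p}(|x-y|)\,dy$, which need not be positive since the kernel is negative far away. I will treat it as a weight $V(x)$ and handle it as follows.

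The identity $|u(x)-0|^p$ is exactly the term that, by the computation in Proposition~\ref{proposition1}(2), combines with the $\Omega\times\Omega$ part to reproduce the full form. So I will first try to show that $V(x)\ge0$ under the diameter hypothesis. If that fails, note that the map $w\mapsto\int_\Omega V(x)\,w(x)^{p/r}dx$ is linear when $r=p$, so its convexity defect vanishes for $r=p$. For $r<p$, I would instead invoke the analogue of the estimate in Proposition~\ref{proposition1}(2) to absorb the possibly negative part. This step is the main obstacle, and I would accept the mild extra assumption needed there, namely a nonnegative killing weight.

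\textbf{Pointwise convexity.} The core is the classical lemma used for the fractional $p$-Laplacian (Brasco--Franzina, Franzina--Palatucci). For $1<r\le p$, the map
\[
(a,b)\mapsto \bigl|a^{1/r}-b^{1/r}\bigr|^{p},\qquad a,b>0,
\]
is convex. Moreover, equality in the convexity inequality between $(a_1,b_1)$ and $(a_2,b_2)$ forces $a_1/b_1=a_2/b_2$.

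This follows from the discrete Hidden convexity. Write $|a^{1/r}-b^{1/r}|=\bigl(\bigl|\,(a,b)\,\bigr|\text{-type}\bigr)$. More concretely, use the inequality
\[
\bigl|((1-t)a_1+ta_2)^{1/r}-((1-t)b_1+tb_2)^{1/r}\bigr|\le\bigl((1-t)|a_1^{1/r}-b_1^{1/r}|^{r}+t|a_2^{1/r}-b_2^{1/r}|^{r}\bigr)^{1/r}.
\]
This is Minkowski's inequality in $\ell^{r}$ applied to the vectors $\bigl(((1-t)a_i)^{1/r},(tb_i)^{1/r}\bigr)$, combined with the reverse triangle inequality. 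Raising to the power $p$ and applying Jensen's inequality for $\tau\mapsto\tau^{p/r}$ gives convexity.

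\textbf{Integration and equality cases.} Integrate the pointwise inequality against the positive kernel on $\Omega\times\Omega$, and add the boundary term. This gives
\[
\mathcal{W}((1-t)w_1+tw_2)\le(1-t)\mathcal{W}(w_1)+t\mathcal{W}(w_2).
\]

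If equality holds, then pointwise equality holds a.e. on $\Omega\times\Omega$, because the kernel is strictly positive there. The equality case of Minkowski then gives $w_1(x)/w_1(y)=w_2(x)/w_2(y)$ a.e. Hence $w_1/w_2\equiv c$, which is ray-strict convexity.

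If in addition $r<p$, the strict convexity of $\tau\mapsto\tau^{p/r}$ in the Jensen step, or equivalently in the $\Omega\times\Omega^c$ term, forces $c=1$, that is, $w_1\equiv w_2$.
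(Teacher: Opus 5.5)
For $r=p$ your argument is complete and is essentially the paper's. It uses the hidden convexity of $(a,b)\mapsto|a^{1/r}-b^{1/r}|^{p}$ (Brasco--Franzina), integrated against $\mathbf{K}^{s+\log}_{p}>0$ on $\Omega\times\Omega$. The exterior term $\int_\Omega V w\,dx$ is linear, and the equality case follows from Minkowski's inequality in $\ell^{r}$. (In that step the two vectors should be $(((1-t)a_1)^{1/r},(ta_2)^{1/r})$ and $(((1-t)b_1)^{1/r},(tb_2)^{1/r})$.)

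The genuine gap is the case $r<p$. You isolate it correctly but do not close it, and it cannot be closed under the stated hypothesis. Neither proving $V\ge 0$ nor absorbing the negative part of $V$ via Proposition~\ref{proposition1}(2) can succeed, because for $r<p$ the assertion itself is false. Indeed $\mathcal{W}(\tau w)=\tau^{p/r}\mathcal{W}(w)$ for $\tau>0$. So if $\mathcal{W}(w)<0$ for a single $w\in\dot{V}^{r}_{+}$, then $\tau\mapsto\mathcal{W}(\tau w)$ is strictly concave when $r<p$, and convexity fails for $w_1=w$, $w_2=cw$ with $c\neq 1$.

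Such $w$ exist with $\operatorname{diam}(\Omega)<e^{B(N,s,p)/p}$. Take $N=1$, $p=2$, $s=\frac{1}{20}$, $\Omega=(-1,1)$ and $w=\chi_{\Omega}$. A direct computation gives
\[
\mathcal{J}_{s+\log,2}(\chi_\Omega,\chi_\Omega)=\frac{2C(1,s,2)}{sp}\,\frac{2^{1-sp}}{1-sp}\,\Big(B-\frac{1}{s}-2\ln 2+\frac{2}{1-sp}\Big),
\]
with $B=B(1,\frac{1}{20},2)\approx 18.99$. Hence $\operatorname{diam}(\Omega)=2<e^{B/2}$, while the bracket is $\approx -0.18$. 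In particular $V<0$ on part of $\Omega$.

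The paper's proof misses this only because it writes the exterior contribution as $\int_\Omega\int_{\mathbb{R}^N\setminus\Omega} w(x)(\cdots)\,dy\,dx$ instead of $w(x)^{p/r}(\cdots)$. That form is correct only for $r=p$. So your treatment of that term is the accurate one, and what is really established is the case $r=p$. That is also the only case used later, in Theorem~\ref{theorem6}(2).

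For $r<p$, an extra hypothesis forcing $V\ge 0$ is genuinely required, not merely convenient. A natural choice is $\operatorname{diam}(\Omega)\le e^{(B(N,s,p)-1/s)/p}$, which is implied by $\operatorname{diam}(\Omega)<e^{-1/(sp)}$ together with $B(N,s,p)\ge 0$. To see that it works, write $d=\operatorname{diam}(\Omega)$ and fix $x\in\Omega$. Then $\{y:|x-y|>d\}\subset\mathbb{R}^N\setminus\Omega$, and
\[
\int_{\{|x-y|>d\}}\mathbf{K}^{s+\log}_p(|x-y|)\,dy=\frac{C(N,s,p)\,\omega_N}{sp}\,d^{-sp}\Big(B(N,s,p)-p\ln d-\frac{1}{s}\Big)\ge 0 .
\]
The rest of $\mathbb{R}^N\setminus\Omega$ lies where $\mathbf{K}^{s+\log}_p>0$. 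Hence $V\ge 0$ on $\Omega$, and in fact $V>0$. Under this assumption your argument goes through for all $1<r\le p$. For $r<p$, strict convexity then follows directly from the strictly convex exterior term $\int_\Omega V\,w^{p/r}\,dx$.
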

	\begin{proof}
First, we show that the set $\dot{V}_{+}^{r}$ is a convex cone. Indeed, let $u, v \in \dot{V}_{+}^{r}$ and $\lambda \in [0,1]$. Then we have
\begin{align*}
& \left\|\left( (1-\lambda)u + \lambda v\right)^{\frac{1}{r}}\right\|^{p}_{W_{0}^{s+\log,p}(\Omega)}\\
&= \iint_{\mathbb{R}^{2N}} \left| \left((1-\lambda)u + \lambda v\right)^{\frac{1}{r}}(x)
- \left((1-\lambda)u + \lambda v\right)^{\frac{1}{r}}(y)\right|^{p} \mathbf{k}^{s+\log,+}_{p}(x-y) dx dy \\
&\leq (1-\lambda)\iint_{\mathbb{R}^{2N}} |u^{\frac{1}{r}}(x)-u^{\frac{1}{r}}(y)|^{p} \mathbf{k}^{s+\log,+}_{p}(x-y) dx dy \\
&\quad + \lambda \iint_{\mathbb{R}^{2N}} |v^{\frac{1}{r}}(x)-v^{\frac{1}{r}}(y)|^{p} \mathbf{k}^{s+\log,+}_{p}(x-y) dx dy,
\end{align*}
where in the last inequality we use \cite[Proposition 4.1]{Brasco-Franzina}. Consequently, we deduce that $(1-\lambda)u + \lambda v \in \dot{V}_{+}^{r}$. Now, let $w_1, w_2 \in \dot{V}_{+}^{r}$ and $t \in [0,1]$, and define $w = (1-t)w_1 + t w_2$. Consequently, since $\operatorname{diam}(\Omega) < e^{\frac{B(N,s,p)}{p}}$ and by applying again \cite[Proposition 4.1]{Brasco-Franzina}, we obtain
\begin{align*}
\mathcal{W}(w)
&= \frac{1}{2}\Big( \mathcal{J}_{+}(w^{\frac{1}{r}}, w^{\frac{1}{r}}) - \mathcal{J}_{-}(w^{\frac{1}{r}}, w^{\frac{1}{r}})\Big)
+ \frac{B(N,s,p) C(N,s,p)}{2p} \mathcal{J}_{s}(w^{\frac{1}{r}}, w^{\frac{1}{r}}) \\[4pt]
&= - \frac{C(N,s,p)}{2} \iint_{\mathbb{R}^{2N}}
\frac{\left|w^{\frac{1}{r}}(x) - w^{\frac{1}{r}}(y)\right|^{p}\ln|x-y|}{|x-y|^{N+sp}} dx dy \\[4pt]
&\quad + \frac{B(N,s,p) C(N,s,p)}{2p} \iint_{\mathbb{R}^{2N}}
\frac{\left|w^{\frac{1}{r}}(x) - w^{\frac{1}{r}}(y)\right|^{p}}{|x-y|^{N+sp}} dx dy \\[4pt]
&= - \frac{C(N,s,p)}{2} \iint_{\Omega \times \Omega}
\frac{\left|w^{\frac{1}{r}}(x) - w^{\frac{1}{r}}(y)\right|^{p}\ln|x-y|}{|x-y|^{N+sp}} dx dy \\[4pt]
&\quad + \frac{B(N,s,p) C(N,s,p)}{2p} \iint_{\Omega \times \Omega}
\frac{\left|w^{\frac{1}{r}}(x) - w^{\frac{1}{r}}(y)\right|^{p}}{|x-y|^{N+sp}} dx dy \\[4pt]
&\quad - C(N,s,p)\int_{\Omega} \int_{\mathbb{R}^{N} \setminus \Omega}
\frac{w(x)\ln|x-y|}{|x-y|^{N+sp}} dx dy \\[4pt]
&\quad + \frac{B(N,s,p) C(N,s,p)}{p}\int_{\Omega} \int_{\mathbb{R}^{N} \setminus \Omega}
\frac{w(x)}{|x-y|^{N+sp}} dx dy \\[4pt]
&\leq (1-t) \mathcal{W}(w_1) + t \mathcal{W}(w_2).
\end{align*}
Assume that equality holds. Using the condition $\operatorname{diam}(\Omega) < e^{\frac{B(N,s,p)}{p}}$, we obtain
\begin{equation}\label{equ18}
\big| w(x)^{\frac{1}{r}} - w(y)^{\frac{1}{r}} \big|^{p} 
= (1-t) \big| w_{1}(x)^{\frac{1}{r}} - w_{1}(y)^{\frac{1}{r}} \big|^{p} 
+ t \big| w_{2}(x)^{\frac{1}{r}} - w_{2}(y)^{\frac{1}{r}} \big|^{p}, \quad \text{a.e. } x, y \in \mathbb{R}^{N}.
\end{equation}
We distinguish two cases.\\[4pt]
\textbf{Case 1:} $p = r$. In this case, we obtain
\[
\left| \| a \|_{\boldsymbol{\ell}^{r}} - \| b \|_{\boldsymbol{\ell}^{r}} \right|^{r}
= \| a - b \|_{\boldsymbol{\ell}^{r}}^{r}, \quad \text{for a.e. } x, y \in \mathbb{R}^{N},
\]
where $\| \cdot \|_{\boldsymbol{\ell}^{r}}$ denotes the $\boldsymbol{\ell}^{r}$-norm in $\mathbb{R}^{2}$, and
\[
a = \big( ((1-t)w_{1}(x))^{1/r},  (t w_{2}(x))^{1/r} \big), \quad
b = \big( ((1-t)w_{1}(y))^{1/r},  (t w_{2}(y))^{1/r} \big).
\]
Since $r>1$, it follows that there exists a constant $c>0$ such that $w_{2} = c w_{1}$ almost everywhere in $\mathbb{R}^{N}$. Consequently, the functional $\mathcal{W}$ is ray-strictly convex on $\dot{V}_{+}^{r}$.\\[4pt]
\textbf{Case 2:} $p \neq r$. Using \eqref{equ18} together with \cite[Proposition~4.1]{Brasco-Franzina} and the strict concavity of the mapping $\tau \mapsto \tau^{\frac{r}{p}}$ on $\mathbb{R}^{+}$, we obtain

\begin{equation*}
\begin{aligned}
& (1-t) \left| w_{1}(x)^{1/r} - w_{1}(y)^{1/r} \right|^{r}
+ t \left| w_{2}(x)^{1/r} - w_{2}(y)^{1/r} \right|^{r} \\[4pt]
& \leq 
\left( (1-t) \left| w_{1}(x)^{1/r} - w_{1}(y)^{1/r} \right|^{p}
+ t \left| w_{2}(x)^{1/r} - w_{2}(y)^{1/r} \right|^{p} \right)^{\frac{r}{p}} \\[4pt]
& = \left| w(x)^{1/r} - w(y)^{1/r} \right|^{r} \\[4pt]
& \leq (1-t) \left| w_{1}(x)^{1/r} - w_{1}(y)^{1/r} \right|^{r}
+ t \left| w_{2}(x)^{1/r} - w_{2}(y)^{1/r} \right|^{r}.
\end{aligned}
\end{equation*}
Hence, by Case~1, we deduce that $w_{2} = c w_{1}$ for some constant $c>0$. Substituting this relation into \eqref{equ18}, we obtain
\[
\big( (1-t) + c t \big)^{\frac{p}{r}} 
= (1-t) + t c^{\frac{p}{r}}.
\]
By the strict convexity of the mapping $\tau \mapsto \tau^{\frac{p}{r}}$ on $\mathbb{R}^{+}$, we necessarily have $c=1$. Hence, $w_{1}=w_{2}$ almost everywhere in $\mathbb{R}^{N}$. Therefore, the functional $\mathcal{W}$ is strictly convex on $\dot{V}_{+}^{r}$.
\end{proof}

\begin{proof}[Proof of Lemma \ref{Lemma2}]
	Let \( u, v \in W^{s+\log, p}_0(\Omega) \) be such that \( u > 0 \) and \( v > 0 \) almost everywhere in \( \Omega \), and let \( \theta \in (0,1) \). Define $ w := (1-\theta) u^{r} + \theta v^{r}. $ It is well-known, see \cite[Proposition 4.1]{Brasco-Franzina}, that \( w \in \dot{V}_{+}^{r} \). Hence, by Proposition~\ref{pro3}, the mapping $ 	\theta \mapsto \Phi(\theta) := \mathcal{W}(w) $ is convex and differentiable on \( [0,1] \). For \( \theta \in (0,1) \), a direct computation yields
	\begin{equation*}
	\begin{aligned}
	\Phi'(\theta)
	= \frac{p}{2}\left(
	\mathcal{J}_{+} \left( w , \frac{v^{r} - u^{r}}{w^{r-1}} \right)
	-
	\mathcal{J}_{-} \left( w , \frac{v^{r} - u^{r}}{w^{r-1}} \right)
	\right)  + \frac{B(N,s,p) C(N,s,p)}{2} 
	\mathcal{J}_{s} \left( w , \frac{v^{r} - u^{r}}{w^{r-1}} \right).
	\end{aligned}
	\end{equation*}
	Using the convexity of \( \Phi \) together with the identities \( w = u^{r} \) at \( \theta = 0 \) and \( w = v^{r} \) at \( \theta = 1 \), we deduce that
	\[
	\Phi'(0) = \lim_{\theta \to 0^+} \Phi'(\theta)
	\leq
	\lim_{\theta \to 1^-} \Phi'(\theta)
	= \Phi'(1),
	\]
	which is equivalent to the inequality
	\begin{equation}\label{equ19}
	\mathcal{J}_{s+\log, p} \left(u,\frac{u^{r}-v^{r}}{u^{r-1}}\right)
	-
	\mathcal{J}_{s+\log, p} \left(v,\frac{v^{r}-u^{r}}{v^{r-1}}\right)
	\geq 0.
	\end{equation}
	Finally, assume that equality holds in \eqref{equ19}. Since \( \Phi' : (0,1) \to \mathbb{R} \) is monotone, it follows that \( \Phi'(\theta) \) is constant on \( (0,1) \), and consequently \( \Phi \) is linear on \( [0,1] \), that is,
	\[
	\Phi(\theta)
	= \mathcal{W}(w)
	= (1-\theta)\Phi(0) + \theta \Phi(1)
	= (1-\theta)\mathcal{W}(u^{r}) + \theta \mathcal{W}(v^{r}),
	\quad \forall \theta \in [0,1].
	\]
	This implies that \( u \equiv c v \) for some constant \( c > 0 \). Moreover, if \( r \neq p \), then \( u \equiv v \) by Proposition~\ref{pro3}.
\end{proof}
\begin{remark}
	An alternative proof of Lemma \ref{Lemma2} can be obtained by using the fractional Picone inequality (see \cite[Proposition 4.2]{Brasco-Franzina}), together with Young's inequality. This approach leads directly to the derivation of \eqref{equ17}. Indeed, we have (see \cite[Lemma 1.8]{Giacomoni-Gouasmia-Mokrane})
	\begin{equation} \label{equ20}
	\begin{aligned}
	&\left| u(x) - u(y) \right|^{p-2} \big( u(x) - u(y)\big)
	\left[ \frac{u(x)^r - v(x)^r}{u(x)^{r-1}}
	- \frac{u(y)^r - v(y)^r}{u(y)^{r-1}}\right] \\[4pt]
	&+ \left| v(x) - v(y) \right|^{p-2} \big( v(x) - v(y)\big)
	\left[ \frac{v(x)^r - u(x)^r}{v(x)^{r-1}}
	- \frac{v(y)^r - u(y)^r}{v(y)^{r-1}}\right]
	\geq 0, \quad \text{for a.e. } (x,y)\in \Omega \times \Omega.
	\end{aligned}
	\end{equation}
Since $\operatorname{diam}(\Omega) < e^{\frac{B(N,s,p)}{p}}$, we infer that
\[
C(N,s,p)\left(- p\ln|x-y| + B(N,s,p)\right) > 0 \quad \text{in } \Omega\times \Omega.
\]
Consequently, it follows from \eqref{equ20} that
\begin{equation*}
\begin{aligned}
&- p C(N,s,p) \ln|x-y|
\left| u(x) - u(y) \right|^{p-2} \big( u(x) - u(y)\big)
\left[ \frac{u(x)^r - v(x)^r}{u(x)^{r-1}}
- \frac{u(y)^r - v(y)^r}{u(y)^{r-1}}\right]\\[4pt]
&\quad + C(N,s,p) B(N,s,p)
\left| u(x) - u(y) \right|^{p-2} \big( u(x) - u(y)\big)
\left[ \frac{u(x)^r - v(x)^r}{u(x)^{r-1}}
- \frac{u(y)^r - v(y)^r}{u(y)^{r-1}}\right] \\[4pt]
&\quad - p C(N,s,p) \ln|x-y|
\left| v(x) - v(y) \right|^{p-2} \big( v(x) - v(y)\big)
\left[ \frac{v(x)^r - u(x)^r}{v(x)^{r-1}}
- \frac{v(y)^r - u(y)^r}{v(y)^{r-1}}\right] \\[4pt]
&\quad + C(N,s,p) B(N,s,p)
\left| v(x) - v(y) \right|^{p-2} \big( v(x) - v(y)\big)
\left[ \frac{v(x)^r - u(x)^r}{v(x)^{r-1}}
- \frac{v(y)^r - u(y)^r}{v(y)^{r-1}}\right] \\[4pt]
&\geq 0 \quad \text{for a.e. } (x,y)\in \Omega\times \Omega.
\end{aligned}
\end{equation*}
It is worth noting that this inequality, being a pointwise estimate, is stronger than D\'{\i}az-Saa inequality \eqref{equ17}.
\end{remark}

\subsection{Gagliardo-Nirenberg inequality}
We establish a fractional logarithmic Gagliardo-Nirenberg inequality. This result provides an estimate for intermediate Lebesgue norms in terms of a convex combination of the base $L^{p}$ norm and the full fractional logarithmic norm. In the broader context of nonlinear partial differential equations, this inequality is a fundamental tool for controlling power-type nonlinearities, deriving a priori energy estimates, and studying both the existence and qualitative properties of normalized solutions.

\begin{theorem}\label{FractionalLogarithmicGNInequality}
	Let $1 < p < \infty$ and $0 < s < 1$ satisfy $sp < N$. For any $q \in [p, p_s^*]$, there exists a constant $C = C(N,s,p,q) > 0$ such that, for every $u \in W^{s+\log, p}(\mathbb{R}^N)$, one has
	\begin{equation*}
	\|u\|_{L^q(\mathbb{R}^N)} 
	\leq 
	C  \|u\|_{W^{s+\log, p}(\mathbb{R}^N)}^{\theta} 
	\|u\|_{L^p(\mathbb{R}^N)}^{1-\theta},
	\end{equation*}
	where $\theta \in [0,1]$ is uniquely determined by
	\begin{equation*}
	\frac{1}{q} = \frac{1-\theta}{p} + \frac{\theta}{p_s^*}.
	\end{equation*}
\end{theorem}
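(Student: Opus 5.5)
The argument reduces to the classical fractional Gagliardo--Nirenberg chain: interpolate between $L^p$ and $L^{p_s^*}$ with Hölder, then bound the $L^{p_s^*}$ norm by the full $W^{s+\log,p}(\mathbb{R}^N)$ norm. The endpoint $q=p$ ($\theta=0$) is trivial. For $q\in(p,p_s^*]$, the log-convexity of Lebesgue norms (Hölder's inequality with the exponents fixed by $\frac1q=\frac{1-\theta}{p}+\frac{\theta}{p_s^*}$) gives
\[
\|u\|_{L^q(\mathbb{R}^N)}\le \|u\|_{L^p(\mathbb{R}^N)}^{1-\theta}\,\|u\|_{L^{p_s^*}(\mathbb{R}^N)}^{\theta}.
\]
It therefore suffices to prove the whole-space critical bound
\[
\|u\|_{L^{p_s^*}(\mathbb{R}^N)}\le C\,\|u\|_{W^{s+\log,p}(\mathbb{R}^N)}, \qquad u\in W^{s+\log,p}(\mathbb{R}^N).
\]

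For this bound I would first use the classical fractional Sobolev inequality on $\mathbb{R}^N$ (valid since $sp<N$): $\|u\|_{L^{p_s^*}}^p\le C_*[u]_{s,p}^p$ for functions vanishing at infinity. Then I would control $[u]_{s,p}^p$ by repeating the splitting in the proof of Proposition~\ref{proposition1}(3), now on all of $\mathbb{R}^N$, with a fixed radius $r=e^{-1}$. On $\{|x-y|<r\}$ one has $(-\ln|x-y|)_+\ge 1$, so this part is at most $C(N,s,p)^{-1}[u]_{s+\log,p}^p$. On $\{|x-y|\ge r\}$, convexity of $\tau\mapsto\tau^p$ and symmetry give the bound $\frac{2^p\omega_N}{sp}e^{sp}\|u\|_{L^p(\mathbb{R}^N)}^p$. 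Neither step uses the support of $u$, so together
\[
[u]_{s,p}^p\le C\big([u]_{s+\log,p}^p+\|u\|_{L^p}^p\big)=C\|u\|_{W^{s+\log,p}(\mathbb{R}^N)}^p.
\]
In particular $W^{s+\log,p}(\mathbb{R}^N)\hookrightarrow W^{s,p}(\mathbb{R}^N)$, and the Sobolev inequality applies because $u\in L^p$ vanishes at infinity in the measure sense.

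Substituting the critical bound into the interpolation inequality gives $\|u\|_{L^q}\le C^\theta\|u\|_{W^{s+\log,p}}^{\theta}\|u\|_{L^p}^{1-\theta}$, with a constant depending only on $N,s,p,q$. The main point needing care is the justification of the fractional Sobolev inequality for general $u\in W^{s,p}(\mathbb{R}^N)$ rather than only for compactly supported functions. I would handle this either by citing \cite[Theorem 6.5]{DiNezza-Palatucci-Valdinoci}, which is stated for $W^{s,p}(\mathbb{R}^N)$, or by density of $C_c^\infty$ in $W^{s,p}(\mathbb{R}^N)$ together with Fatou's lemma. There is no Poincaré-type difficulty here, because the $L^p$ term is already part of the $W^{s+\log,p}(\mathbb{R}^N)$ norm.
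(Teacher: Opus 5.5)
Your proposal is correct and is essentially the paper's argument: the paper quotes the classical fractional Gagliardo--Nirenberg inequality $\|u\|_{L^q(\mathbb{R}^N)}\le S[u]_{s,p}^{\theta}\|u\|_{L^p(\mathbb{R}^N)}^{1-\theta}$, which is exactly your H\"older interpolation combined with the fractional Sobolev inequality, and then bounds $[u]_{s,p}^p\le C_0\|u\|^p_{W^{s+\log,p}(\mathbb{R}^N)}$ using the splitting from Proposition~\ref{proposition1}(3) at a fixed radius $r\in(0,1)$. You also remark that this splitting never uses the support of $u$, so it holds on all of $\mathbb{R}^N$ with $\|u\|_{L^p(\mathbb{R}^N)}$; that is the justification the paper's proof needs, since Proposition~\ref{proposition1}(3) is stated only for $W_0^{s+\log,p}(\Omega)$.
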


\begin{proof}
We begin by recalling the classical fractional Gagliardo-Nirenberg inequality due to Brezis and Mironescu \cite{Brezis-Mironescu}. This result asserts the existence of a positive constant $S>0$ such that, for every $u \in W^{s+\log,p}(\mathbb{R}^N)$-noting that $W^{s+\log,p}(\mathbb{R}^N) \subset W^{s,p}(\mathbb{R}^N)$ (see Proposition \ref{proposition4})-the following estimate holds
		\begin{equation}
		\label{StandardGN}
		\|u\|_{L^q(\mathbb{R}^N)} \leq S [u]_{s,p}^{\theta} \|u\|_{L^p(\mathbb{R}^N)}^{1-\theta}.
		\end{equation}
		On the other hand, Proposition~\ref{proposition1} (3) ensures that for every $0<r<1$,
		\begin{equation*}
		\begin{aligned}
		[u]_{s,p}^{p}
		= \iint_{\mathbb{R}^{2N}} \frac{|u(x)-u(y)|^{p}}{|x-y|^{N+sp}} dx dy
		& \leq - \frac{1}{C(N,s,p) \ln r} \|u\|^{p}_{W_{0}^{s+\log,p}(\Omega)}
		+ \frac{2^{p}\omega_{N}}{sp} r^{-sp} \|u\|^{p}_{L^{p}(\Omega)}\\[4pt]
		& \leq C_{0} \|u\|^{p}_{W^{s+\log,p}(\mathbb{R}^N)},
		\end{aligned}
		\end{equation*}
		where $C_{0}>0$ is a constant depending only on $N,s,p$ and $r$. Taking the $p$-th root, we infer that
		\[
		[u]_{s,p} \leq C_{0}^{\frac{1}{p}} \|u\|_{W^{s+\log,p}(\mathbb{R}^N)}.
		\]
		Substituting this estimate into \eqref{StandardGN} gives
		\begin{equation*}
		\|u\|_{L^q(\mathbb{R}^N)}
		\leq S\left(C_{0}^{\frac{1}{p}}\|u\|_{W^{s+\log,p}(\mathbb{R}^N)}\right)^{\theta}
		\|u\|_{L^p(\mathbb{R}^N)}^{1-\theta},
		\end{equation*}
		which implies
		\[
		\|u\|_{L^q(\mathbb{R}^N)}
		\leq C \|u\|_{W^{s+\log,p}(\mathbb{R}^N)}^{\theta} 
		\|u\|_{L^p(\mathbb{R}^N)}^{1-\theta},
		\]
where the constant $C$ is given by $C := S C_{0}^{\frac{\theta}{p}}$.
\end{proof}

\subsection{Pohozaev-type identity}
In this subsection, we derive a Pohozaev identity associated with the fractional logarithmic $p$-Laplace operator. This identity serves as a fundamental tool in the study of critical exponents and in the analysis of geometric properties related to the corresponding boundary value problems.

\begin{definition}
	\label{BoundaryRegularity}
	Let $1 < p < \infty$ and $0 < s < 1$, and let $\Omega \subset \mathbb{R}^N$ be a bounded domain with $C^{1,1}$ boundary. We define the weighted fractional logarithmic regularity space $\mathcal{C}_{s+\log}^p(\overline{\Omega})$ by
	\[
	\mathcal{C}_{s+\log}^p(\overline{\Omega})
	:= \left\{
	u \in W_0^{s+\log,p}(\Omega)\; :\;
	\frac{u(x)}{d(x)^s  \ln \bigl(e + 1/d(x)\bigr)} \in L^\infty(\Omega)
	\right\}.
	\]
\end{definition}

\begin{remark}
	The weight $d(x)^s \ln \bigl(e + 1/d(x)\bigr)$ accurately reflects the boundary decay associated with the logarithmic singularity of the operator $\mathbf{K}^{s+\log}_{p}$. In contrast to the standard fractional $p$-Laplacian, for which solutions typically satisfy $u(x)/d(x)^s \in L^\infty(\Omega)$, the presence of the logarithmic correction naturally leads to the modified functional space $\mathcal{C}_{s+\log}^p(\overline{\Omega})$. A rigorous characterization of the singular boundary flux $\mathcal{B}_{s+\log,p}(u)$ requires establishing the precise asymptotic behavior
	\begin{equation*}
	0 < \liminf_{x \to \partial \Omega} \frac{u(x)}{d(x)^s \ln(e+1/d(x))} 
	\leq 
	\limsup_{x \to \partial \Omega} \frac{u(x)}{d(x)^s \ln(e+1/d(x))} 
	< \infty,
	\end{equation*}
	in analogy with the classical fractional setting developed by Ros-Oton and Serra \cite{RosOton-Serra}. In order to isolate the algebraic structure of the Pohozaev identity and the defect measure $\Gamma_{s,p}(u)$, we assume a priori that $u \in \mathcal{C}_{s+\log}^p(\overline{\Omega})$. The proof of this optimal boundary regularity remains an open problem.
\end{remark}

\begin{theorem}
	\label{Pohozaev}
	Let $\Omega \subset \mathbb{R}^N$ be a bounded domain with a $C^{1,1}$ boundary. Assume that $u \in \mathcal{C}_{s+\log}^p(\overline{\Omega})$ is a sufficiently regular weak solution to the pure power Dirichlet problem
	\begin{equation*}
	(-\Delta)_p^{s+\log} u = |u|^{q-2}u \quad \text{in } \Omega, 
	\qquad 
	u = 0 \quad \text{in } \mathbb{R}^{N} \setminus \Omega
	\end{equation*}
	for some exponent $q > p$. Then $u$ satisfies the following Pohozaev identity:
	\begin{equation}
	\label{PohozaevIdentity}
	\frac{N-sp}{2p} \iint_{\mathbb{R}^{2N}} |u(x)-u(y)|^{p} \mathbf{K}^{s+\log}_{p}(|x-y|)  dx  dy  - \frac{N}{q} \|u\|_{L^q(\Omega)}^q 
	= \Gamma_{s,p}(u) - \mathcal{B}_{s+\log, p}(u),
	\end{equation}
	where the term $\Gamma_{s,p}(u)$ represents the interior defect measure, defined by
	\begin{equation}
	\label{GammaDefect}
	\Gamma_{s,p}(u) := \frac{C(N,s,p)}{2} \iint_{|x-y|<1} \frac{|u(x)-u(y)|^p}{|x-y|^{N+sp}}  dx  dy,
	\end{equation}
	and $\mathcal{B}_{s+\log, p}(u)$ denotes the boundary flux contribution on $\partial \Omega$, given by
	\begin{equation*}
	\mathcal{B}_{s+\log, p}(u) := C_{s,p} \int_{\partial \Omega} 
	\left( \lim_{x \to \sigma} \frac{u(x)}{d(x)^s \ln(e+1/d(x))} \right)^p 
	(x \cdot \nu(\sigma))  d\sigma,
	\end{equation*}
	where $\nu(\sigma)$ denotes the outward unit normal vector at $\sigma \in \partial \Omega$, and $C_{s,p} > 0$ is a dimensional constant.
\end{theorem}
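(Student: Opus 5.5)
The plan is to test the equation against the dilation generator $x\cdot\nabla u$, in the spirit of Ros-Oton and Serra \cite{RosOton-Serra}, and then to isolate the part of the kernel that breaks scaling. For $\lambda>1$ set $u_\lambda(x):=u(\lambda x)$. Since $u$ is a weak solution, $\mathcal{J}_{s+\log,p}(u,\varphi)=\int_\Omega |u|^{q-2}u\,\varphi\,dx$ for admissible $\varphi$. We would formally use $\varphi=\frac{d}{d\lambda}u_\lambda|_{\lambda=1^+}=x\cdot\nabla u$. This is not in $W_0^{s+\log,p}(\Omega)$ near $\partial\Omega$, so we first work with the difference quotients $(u_\lambda-u)/(\lambda-1)$. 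The right-hand side is classical:
\[
\int_\Omega |u|^{q-2}u\,(x\cdot\nabla u)\,dx=-\frac{N}{q}\|u\|_{L^q(\Omega)}^q,
\]
by the divergence theorem, using that $u=0$ on $\partial\Omega$ and $|u|^q$ is regular enough thanks to $u\in \mathcal{C}_{s+\log}^p(\overline{\Omega})$.

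For the left-hand side, consider $E(\lambda):=\frac12\iint |u_\lambda(x)-u_\lambda(y)|^p\mathbf{K}^{s+\log}_p(|x-y|)\,dx\,dy$. The change of variables $x\mapsto x/\lambda$, $y\mapsto y/\lambda$ gives
\[
E(\lambda)=\frac{\lambda^{-N}}{2}\iint |u(x)-u(y)|^p\,\mathbf{K}^{s+\log}_p\!\left(\frac{|x-y|}{\lambda}\right)dx\,dy .
\]
Differentiating at $\lambda=1$ produces the commutator $-(z\cdot\nabla\mathbf{K}+N\mathbf{K})$. From \eqref{equ14} one computes
\[
z\cdot\nabla \mathbf{K}^{s+\log}_p(|z|)+(N+sp)\mathbf{K}^{s+\log}_p(|z|)=-pC(N,s,p)|z|^{-N-sp},
\]
so
\[
E'(1)=\frac{sp}{2}\iint|u(x)-u(y)|^p\mathbf{K}\,dx\,dy+\frac{pC(N,s,p)}{2}\iint\frac{|u(x)-u(y)|^p}{|x-y|^{N+sp}}\,dx\,dy .
\]
The key scaling observation is that the homogeneous part behaves like the classical kernel. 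The logarithmic correction contributes the extra nonnegative energy. Restricting to $|x-y|<1$, where the positive kernel $\mathbf{k}^{s+\log,+}_p$ lives, gives the defect $\Gamma_{s,p}(u)$; the region $|x-y|\ge 1$ is absorbed into the constants of the identity. Here some bookkeeping of normalisations (factors $p$ versus $1/p$) is needed to match \eqref{PohozaevIdentity}.

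Next, $E'(1)$ must be related to $p\,\mathcal{J}_{s+\log,p}(u,x\cdot\nabla u)$. For interior-supported smooth functions this is just the chain rule. In our setting the difference between the one-sided derivative of $E$ and the tested equation is a boundary term. It arises because $x\cdot\nabla u$ blows up like $d^{s-1}\ln(1/d)$ near $\partial\Omega$. Following Ros-Oton and Serra, we would localise near $\partial\Omega$ with $C^{1,1}$ flattening and write $u\approx \Psi(\sigma)\,d^s\ln(e+1/d)$, where $\Psi$ is the boundary quotient in the definition of $\mathcal{B}_{s+\log,p}$. We would then compute the one-dimensional limit
\[
\lim_{\lambda\to1}\frac{1}{\lambda-1}\left(\int (u_\lambda-\text{terms})\right),
\]
which yields $C_{s,p}\int_{\partial\Omega}\Psi^p\,(x\cdot\nu)\,d\sigma$. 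Combining $E'(1)=p\,\mathcal{J}(u,x\cdot\nabla u)+\mathcal{B}$-type terms with the right-hand side, and subtracting $\frac{N}{p}$ times the energy identity $\mathcal{J}_{s+\log,p}(u,u)=\|u\|_q^q$ (obtained by testing with $u$), we rearrange to obtain \eqref{PohozaevIdentity}.

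The main obstacle is this boundary step: computing the singular flux for the nonlinear, log-corrected kernel. It requires a one-dimensional model computation for $(x_+)^s\ln(e+1/x_+)$ under $(-\Delta)_p^{s+\log}$, plus control of error terms via the Hardy inequality of Theorem \ref{FractionalLogarithmicHardyInequality}. We would first attempt the computation with the pure profile $(x_+)^s$ and then treat the logarithmic factor as a slowly varying perturbation. We also expect that the constant $C_{s,p}$ cannot be made explicit.
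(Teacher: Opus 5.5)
Your overall route is the paper's (multiplier $x\cdot\nabla u$, the kernel identity $z\cdot\nabla\mathbf{K}^{s+\log}_p+(N+sp)\mathbf{K}^{s+\log}_p=-pC(N,s,p)|z|^{-N-sp}$, boundary flux deferred to Ros-Oton--Serra). However, the step that is supposed to produce $\Gamma_{s,p}$ fails.

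First, a slip: rescaling both variables gives the Jacobian $\lambda^{-2N}$, not $\lambda^{-N}$. That is why you get $\frac{sp}{2}$ instead of $-\frac{N-sp}{2}$, and then reach for an unjustified subtraction of the energy identity. In fact the dilation is explicit, because $\mathbf{K}^{s+\log}_p(r/\lambda)=\lambda^{N+sp}\big(\mathbf{K}^{s+\log}_p(r)+pC(N,s,p)\ln\lambda\, r^{-N-sp}\big)$:
\[
E(\lambda)=\frac{\lambda^{sp-N}}{2}\left(\iint_{\mathbb{R}^{2N}}|u(x)-u(y)|^{p}\,\mathbf{K}^{s+\log}_{p}(|x-y|)\,dx\,dy+pC(N,s,p)\ln\lambda\,[u]_{s,p}^{p}\right).
\]
Hence $E'(1)=-\frac{N-sp}{2}\iint|u(x)-u(y)|^p\mathbf{K}^{s+\log}_p\,dx\,dy+\frac{pC(N,s,p)}{2}[u]_{s,p}^p$. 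After dividing by $p$, the defect is $\frac{C(N,s,p)}{2}[u]_{s,p}^p$, the Gagliardo energy over all of $\mathbb{R}^{2N}$.

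The part over $\{|x-y|\ge 1\}$ cannot be ``absorbed into the constants of the identity'', because the identity has no free constants. This term depends on $u$ and is strictly positive for $u\not\equiv0$: pairs $x\in\Omega$, $y\notin\Omega$ with $|x-y|\ge1$ contribute $|u(x)|^p|x-y|^{-N-sp}$. If $\operatorname{diam}\Omega<1$ it equals exactly $\frac{C(N,s,p)\,\omega_N}{sp}\|u\|_{L^p(\Omega)}^p$. Being an interior quantity, it cannot be hidden in $\mathcal{B}_{s+\log,p}$ either.

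It is exactly the commutator of the $-p\,\mathbf{k}^{s+\log,-}_p$ part of the kernel, namely $-pC(N,s,p)|z|^{-N-sp}\mathbf{1}_{\{|z|>1\}}$. Cutting at $|x-y|<1$ ``where $\mathbf{k}^{s+\log,+}_p$ lives'' confuses $\mathbf{K}^{s+\log}_p$ with its positive part. The paper's proof makes the same move: it states the kernel identity only ``for $|z|<1$'' and keeps only that region. So this step cannot be closed as stated. What the multiplier argument actually yields is the identity with $\Gamma_{s,p}(u)$ replaced by $\frac{C(N,s,p)}{2}[u]_{s,p}^p$.

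Separately, the boundary term is only sketched in your proposal, just as in the paper, and it is not a routine adaptation. There are three problems:
\begin{enumerate}
\item Testing with $(u_\lambda-u)/(\lambda-1)$ needs $u_\lambda=0$ outside $\Omega$, i.e.\ $\Omega$ star-shaped with respect to the origin. The statement allows any $C^{1,1}$ domain.
\item The profile $u\approx\Psi\,d^s\ln(e+1/d)$ is itself an unproved assumption; the paper calls it open.
\item Ros-Oton--Serra obtain their flux from the linear structure of the bilinear form, through $(-\Delta)^{s/2}u$ and its singular behaviour at $\partial\Omega$. This has no analogue for $p\neq2$.
\end{enumerate}
So the one-dimensional computation you postpone is the real content of the boundary part. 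Until it is carried out, neither the form of $\mathcal{B}_{s+\log,p}$ nor the positivity of $C_{s,p}$ is justified.
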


\begin{proof}
The proof is based on the multiplier method, where the equation is tested against the Pohozaev-type vector field defined by $v(x) := x \cdot \nabla u(x)$. Multiplying the governing equation by $v(x)$ and integrating over $\Omega$, we obtain
\begin{equation}
\label{PohozaevTest}
\int_\Omega (x \cdot \nabla u) (-\Delta)_p^{s+\log} u  dx
= \int_\Omega (x \cdot \nabla u) |u|^{q-2}u  dx.
\end{equation}
	For the right-hand side, standard integration by parts gives
	\begin{equation}
	\label{PohozaevRHS}
	\int_\Omega (x \cdot \nabla u) |u|^{q-2}u  dx = \frac{1}{q} \int_\Omega x \cdot \nabla (|u|^q)  dx = -\frac{N}{q} \int_\Omega |u|^q  dx.
	\end{equation}
For the left-hand side, we extend the operator to the symmetric domain $\mathbb{R}^{2N}$. Since $u$ vanishes outside $\Omega$, the gradient $\nabla u$ develops a singular jump across the boundary $\partial \Omega$. By isolating this boundary contribution, we obtain the surface term $\mathcal{B}_{s+\log, p}(u)$ (see Ros-Oton and Serra \cite{RosOton-Serra} for a detailed analysis of boundary blow-up phenomena). The remaining interior term can be written in the symmetric form
\begin{equation}
\label{PohozaevLHS}
\frac{1}{2} \iint_{\mathbb{R}^{2N}} \big( x \cdot \nabla u(x) - y \cdot \nabla u(y) \big) |u(x)-u(y)|^{p-2}(u(x)-u(y)) \mathbf{K}^{s+ \log}_{p}(|x - y|) dx dy.
\end{equation}
Introducing the vector $z = x - y$ and applying the chain rule, the symmetrized fractional integration-by-parts commutator produces a term involving
\[
z \cdot \nabla \mathbf{K}_{p}^{s+\log}(z) + 2N \mathbf{K}_{p}^{s+\log}(z).
\]
For $|z| < 1$, we compute the exact spatial gradient of the full kernel
\begin{align*}
z \cdot \nabla \mathbf{K}_{p}^{s+\log}(z)
&= z \cdot \nabla \Big( C(N,s,p) |z|^{-N-sp} \big(B(N,s,p) - p \ln |z|\big) \Big) \\
&= C(N,s,p) z \cdot \nabla \big( |z|^{-N-sp} \big) \big(B(N,s,p) - p \ln |z|\big) \\
&\quad + C(N,s,p) |z|^{-N-sp} z \cdot \nabla \big(B(N,s,p) - p \ln |z|\big) \\
&= -(N+sp) \mathbf{K}_{p}^{s+\log}(z) - p C(N,s,p) |z|^{-N-sp}.
\end{align*}
Substituting this differential identity into the commutator integral, the term $-(N+sp)$ combines with the symmetric dimensional contribution $2N$, yielding exactly $(N-sp)\mathbf{K}_{p}^{s+\log}(z)$. After dividing by the factor $2p$ arising from the chain rule, the first term evaluates to
\begin{equation*}
    \frac{N-sp}{2p} \iint_{\mathbb{R}^{2N}} |u(x)-u(y)|^p \mathbf{K}_p^{s+\log}(|x-y|)  dx  dy.
\end{equation*}
The remaining contribution gives rise to the defect measure
\begin{equation*}
\Gamma_{s,p}(u)
= \frac{1}{2p} \iint_{|x-y|<1} |u(x)-u(y)|^p \Big( p C(N,s,p) |x-y|^{-N-sp} \Big) dx dy.
\end{equation*}
Consequently, the interior symmetric integral \eqref{PohozaevLHS} evaluates exactly to
\[
-\frac{N-sp}{2p} \iint_{\mathbb{R}^{2N}} |u(x)-u(y)|^p \mathbf{K}_p^{s+\log}(|x-y|)  dx  dy + \Gamma_{s,p}(u).
\]
Recalling that the full left-hand side of \eqref{PohozaevTest} consists of this interior contribution minus the boundary flux $\mathcal{B}_{s+\log,p}(u)$, we equate it with the right-hand side in \eqref{PohozaevRHS} to deduce
\begin{equation*}
-\frac{N-sp}{2p} \iint_{\mathbb{R}^{2N}} |u(x)-u(y)|^p \mathbf{K}_p^{s+\log}(|x-y|)  dx  dy + \Gamma_{s,p}(u) - \mathcal{B}_{s+\log,p}(u)
= -\frac{N}{q} \|u\|_{L^q(\Omega)}^q.
\end{equation*}
Multiplying by $-1$ and rearranging the terms yields the final identity \eqref{PohozaevIdentity}. This completes the proof.
\end{proof}

\begin{remark}
	The identity \eqref{PohozaevIdentity} reveals a profound geometric difference between the fractional logarithmic $p$-Laplacian and the standard fractional $p$-Laplacian. For the standard operator, the defect measure $\Gamma_{s,p}(u) \equiv 0$. Consequently, on strictly star-shaped domains where $\mathcal{B}_{s,p}(u) > 0$, the standard Pohozaev identity immediately yields a contradiction for critical and supercritical exponents ($q \ge p_s^*$), proving non-existence. However, in our logarithmic setting, testing the weak formulation of the PDE with $u$ yields the full energy balance
	\begin{equation*}
		\mathcal{J}_{s+\log, p}(u,u) = \frac{1}{2} \iint_{\mathbb{R}^{2N}} |u(x)-u(y)|^p \mathbf{K}_p^{s+\log}(|x-y|) \, dx \, dy = \|u\|_{L^q(\Omega)}^q.
	\end{equation*}
	Substituting this into \eqref{PohozaevIdentity}, we obtain
	\begin{equation*}
		\left( \frac{N-sp}{p} - \frac{N}{q} \right) \|u\|_{L^q(\Omega)}^q + \mathcal{B}_{s+\log, p}(u) = \Gamma_{s,p}(u).
	\end{equation*}
	For $q \ge p_s^*$, the coefficient $\left( \frac{N-sp}{p} - \frac{N}{q} \right)$ is nonnegative. Moreover, if $\Omega$ is strictly star-shaped, then $\mathcal{B}_{s+\log, p}(u) > 0$. Since the defect measure $\Gamma_{s,p}(u)$ is strictly positive, the two positive contributions on the left-hand side may precisely balance the positive defect measure on the right-hand side. Consequently, the classical Pohozaev-type non-existence argument is no longer applicable. The logarithmic correction introduces a strictly positive defect energy, which may support the existence of critical and even supercritical solutions in star-shaped domains.
\end{remark}

\section{Continuous and compact embeddings and density results}\label{section4}

In this section, we establish continuous and compact embedding theorems into Lebesgue and H\"older spaces, which precisely characterize the integrability and regularity properties of functions in both the subcritical and supercritical regimes with respect to the spatial dimension.  To address the classical loss of compactness in unbounded domains, we prove a Strauss-type compact embedding result for radially symmetric functions.  Finally, we derive density results showing that smooth functions are dense in the spaces $W^{s+\log, p}(\mathbb{R}^N)$ and $W^{s+\log, p}(\Omega)$. We start with the following first result.

\begin{theorem}
	\label{EmbeddingsTheorem}
	Let $1 < p < \infty$ and $0 < s < 1$ with $sp < N$. Let $\Omega \subset \mathbb{R}^N$ be a bounded Lipschitz domain. Then, for any $q \in [1, p_s^*]$, the following assertions hold:
	\begin{enumerate}
		\item[(i)] The space $W_0^{s+\log, p}(\Omega)$ is continuously embedded into $L^q(\Omega)$, that is, $ W_0^{s+\log, p}(\Omega) \hookrightarrow L^q(\Omega). $
		\item[(ii)] The embedding $ 	W_0^{s+\log, p}(\Omega) \hookrightarrow L^q(\Omega) $ is compact.
	\end{enumerate}
\end{theorem}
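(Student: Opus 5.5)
Part (i) is quick. Theorem \ref{TheoremSobolevInequality} already gives $\|u\|_{L^{p_s^*}(\Omega)}\le C\|u\|_{W_0^{s+\log,p}(\Omega)}$ for all $u\in W_0^{s+\log,p}(\Omega)$. Since $\Omega$ is bounded, H\"older's inequality yields $\|u\|_{L^q(\Omega)}\le |\Omega|^{\frac1q-\frac1{p_s^*}}\|u\|_{L^{p_s^*}(\Omega)}$ for every $q\in[1,p_s^*]$. This proves the continuous embedding.

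For part (ii) the interesting case is $q=p_s^*$; the case $q<p_s^*$ is contained in it. Alternatively, $q<p_s^*$ follows from Proposition \ref{proposition4}, which gives $W_0^{s+\log,p}(\Omega)\hookrightarrow W_0^{s,p}(\Omega)$, combined with the compactness statement of Remark \ref{remark1}.

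So let $(u_n)$ be bounded in $W_0^{s+\log,p}(\Omega)$, say $[u_n]_{s+\log,p}\le M$. By reflexivity and the subcritical compactness above, we may pass to a subsequence with $u_n\rightharpoonup u$ weakly in $W_0^{s+\log,p}(\Omega)$ and $u_n\to u$ strongly in $L^p(\Omega)$. Then $w_n:=u_n-u$ satisfies $[w_n]_{s+\log,p}\le 2M$ and $\|w_n\|_{L^p}\to0$.

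The key step is to apply the interpolation estimate of Proposition \ref{proposition1} (3) to $w_n$. For every $r\in(0,1)$ it gives
\[
[w_n]_{s,p}^p\le \frac{(2M)^p}{C(N,s,p)(-\ln r)}+\frac{2^p\omega_N}{sp}\,r^{-sp}\|w_n\|_{L^p(\Omega)}^p .
\]
Given $\varepsilon>0$, first choose $r$ small so that the first term is below $\varepsilon/2$. Then choose $n_0$ such that the second term is below $\varepsilon/2$ for all $n\ge n_0$. Hence $[w_n]_{s,p}\to0$. The classical fractional Sobolev inequality \eqref{ClassicalSobolev} then gives $\|w_n\|_{L^{p_s^*}(\Omega)}^p\le C_*[w_n]_{s,p}^p\to0$, that is, $u_n\to u$ in $L^{p_s^*}(\Omega)$. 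For general $q\le p_s^*$, H\"older's inequality as in (i) finishes the argument.

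The main obstacle is this critical-exponent step. It works only because the logarithmic weight $(-\ln|x-y|)_+$ blows up near the diagonal, so the short-range part of the $W^{s,p}$ seminorm is uniformly small on bounded sets of $W_0^{s+\log,p}(\Omega)$. This is exactly what the first term in Proposition \ref{proposition1} (3) quantifies. One detail to check is that the constant in that estimate does not depend on $u$, only on $N,s,p$. This holds because its derivation uses only $|x-y|<r<1\Rightarrow -\ln|x-y|\ge -\ln r$ and a convexity bound, and it applies to differences $u_n-u$ since the space is linear.
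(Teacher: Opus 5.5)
Your proposal is correct and follows the paper's proof essentially step for step. Part (i) goes through Theorem \ref{TheoremSobolevInequality} and H\"older. Part (ii) extracts an $L^p$-convergent subsequence via the $W_0^{s,p}$ Rellich theorem, applies Proposition \ref{proposition1} (3) to $u_n-u$ together with the classical fractional Sobolev inequality, and lets first $n\to\infty$, then $r\to 0^+$. Your small additions only make explicit what the paper leaves implicit: obtaining $u$ as a weak limit via reflexivity, so that $u\in W_0^{s+\log,p}(\Omega)$ and $[u_n-u]_{s+\log,p}\le 2M$, and observing that in fact $[u_n-u]_{s,p}\to 0$.
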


\begin{proof}
Part (i) follows directly from Theorem \ref{TheoremSobolevInequality} together with H\"older's inequality.  To prove (ii), let $(u_k)$ be a bounded sequence in $W_0^{s+\log, p}(\Omega)$. By (i) and the fractional Rellich-Kondrachov theorem (see Di Nezza et al.~\cite{DiNezza-Palatucci-Valdinoci}), the embedding $W_0^{s,p}(\Omega) \hookrightarrow L^p(\Omega)$ is compact. Consequently, there exist a subsequence, still denoted by $(u_k)$, and a function $u \in W_0^{s+\log, p}(\Omega)$ such that
\begin{equation}
\label{LpConv}
u_k \to u \quad \text{strongly in } L^p(\Omega).
\end{equation}
Let $v_k := u_k - u$. We invoke the fractional Sobolev inequality (see Remark~\ref{remark1}), which ensures the existence of a constant $C = C(N,s,p,\Omega) > 0$ such that
\[
\|v_k\|_{L^{p_s^*}(\Omega)}^p \leq C [v_k]_{s,p}^p.
\]
Moreover, combining this estimate with Proposition~\ref{proposition1} (3), we obtain that for every $0<r<1$,
\begin{equation}\label{CompactnessBound}
\begin{aligned}
\|v_k\|_{L^{p_s^*}(\Omega)}^p 
& \leq - \frac{C}{C(N,s,p) \ln r} [v_k]_{s+\log, p}^{p}
+ \frac{C 2^{p}\omega_{N}}{sp} r^{-sp} \|v_k\|^{p}_{L^{p}(\Omega)}.
\end{aligned}
\end{equation}
Since $(v_k)$ is uniformly bounded in $W_0^{s+\log, p}(\Omega)$, we define  $ M := \sup_{k} [v_k]_{s+\log, p}^p < \infty. $ Taking the limit supremum as $k \to \infty$ in \eqref{CompactnessBound} and combining it with \eqref{LpConv}, we obtain
\begin{equation*}
\limsup_{k \to \infty} \|v_k\|_{L^{p_s^*}(\Omega)}^p
\leq - \frac{C M}{C(N,s,p) \ln r}.
\end{equation*}
Since $r \in (0,1)$ is arbitrary, letting $r \to 0^+$ yields
\begin{equation*}
\limsup_{k \to \infty} \|u_k - u\|_{L^{p_s^*}(\Omega)}^p = 0 \Longrightarrow u_k \to u \quad \text{strongly in } L^{p_s^*}(\Omega).
\end{equation*}
Finally, for any $q \in [1, p_s^*)$, the strong convergence in $L^{p_s^*}(\Omega)$, together with the standard H\"older interpolation inequality on the bounded domain $\Omega$, yields the compactness of the embedding in $L^q(\Omega)$.
\end{proof}

\begin{theorem}
	\label{HolderEmbedding}
	Let $\Omega \subset \mathbb{R}^N$ be a bounded domain with Lipschitz boundary, and suppose that the supercritical growth condition $ sp > N $ is satisfied. Define the H\"older exponent $\beta := s - \frac{N}{p} \in (0,1)$. Then, the space $W_0^{s+\log, p}(\Omega)$ is continuously embedded into the H\"older space $C^{0,\beta}(\overline{\Omega})$, namely,
	\[
	W_0^{s+\log, p}(\Omega) \hookrightarrow C^{0,\beta}(\overline{\Omega}).
	\]
\end{theorem}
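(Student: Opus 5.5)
The plan is to reduce the statement to the classical fractional Morrey embedding, arguing exactly as in the proof of Theorem~\ref{TheoremSobolevInequality} but with the Sobolev inequality replaced by its supercritical counterpart. Fix $u \in W_0^{s+\log,p}(\Omega)$. By Proposition~\ref{proposition1}~(3), applied with a fixed radius, say $r = e^{-1}$, and by the Poincar\'e inequality of Proposition~\ref{proposition3}, we obtain
\[
[u]_{s,p}^p \leq \Big( \frac{1}{C(N,s,p)} + \frac{2^p\omega_N}{sp} e^{sp} C(s,p,N,\Omega) \Big) [u]_{s+\log,p}^p .
\]
Combined once more with Proposition~\ref{proposition3}, this gives
\[
\|u\|_{W^{s,p}(\mathbb{R}^N)} \leq C_1 \|u\|_{W_0^{s+\log,p}(\Omega)}.
\]
This is precisely the embedding $W_0^{s+\log,p}(\Omega)\hookrightarrow W_0^{s,p}(\Omega)$ of Proposition~\ref{proposition4}, now with an explicit constant.

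Next I invoke the fractional Morrey-type embedding for $sp>N$ (Di Nezza et al.~\cite[Theorem 8.2]{DiNezza-Palatucci-Valdinoci}). Since $u$ vanishes outside $\Omega$, we may regard $u$ as an element of $W^{s,p}(\mathbb{R}^N)$, which avoids any extension-domain issue; the Lipschitz assumption on $\Omega$ would in any case make $\Omega$ an extension domain. The embedding yields a constant $C_2=C_2(N,s,p)$ such that, after modification on a null set, $u$ is H\"older continuous with
\[
\|u\|_{L^\infty(\mathbb{R}^N)} + \sup_{x\neq y}\frac{|u(x)-u(y)|}{|x-y|^{\beta}} \leq C_2 \|u\|_{W^{s,p}(\mathbb{R}^N)}, \qquad \beta = s-\tfrac{N}{p}.
\]
Restricting this estimate to $\overline{\Omega}$ and chaining it with the first step gives $\|u\|_{C^{0,\beta}(\overline{\Omega})} \leq C_1C_2\|u\|_{W_0^{s+\log,p}(\Omega)}$. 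Linearity of the inclusion map then turns this bound into continuity of the embedding.

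The main point requiring care is the Morrey step. If one prefers a self-contained argument, I would prove the H\"older bound by the Campanato approach. For a ball $B_\rho(x)$, Jensen's inequality gives
\[
\fint_{B_\rho}|u-u_{B_\rho}|^p \leq C\rho^{sp-N}\int_{B_\rho}\int_{B_\rho}\frac{|u(x)-u(y)|^p}{|x-y|^{N+sp}}\,dx\,dy \leq C\rho^{sp-N}[u]_{s,p}^p .
\]
A telescoping argument over dyadic balls then yields $|u(x)-u(y)|\le C|x-y|^{s-N/p}[u]_{s,p}$. The sup bound follows because $u=0$ outside the bounded set $\Omega$: take a point $y_0\notin\Omega$ within distance $\mathrm{diam}(\Omega)$ of $x$, so that $|u(x)| = |u(x)-u(y_0)| \le C\,\mathrm{diam}(\Omega)^\beta[u]_{s,p}$. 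I do not expect the logarithmic weight to cause any difficulty here, since it enters only through the reduction to $[u]_{s,p}$ in the first step.

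A remark on the exponent: the logarithmic gain would even allow a slightly better modulus of continuity, but the statement only asks for $\beta=s-N/p$, so no sharper analysis is needed.
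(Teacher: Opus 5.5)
Your proposal is correct and follows essentially the same route as the paper. You bound $[u]_{s,p}^p$ by $[u]_{s+\log,p}^p$ using Proposition~\ref{proposition1}~(3) at a fixed radius together with the Poincar\'e inequality of Proposition~\ref{proposition3}, and then apply the fractional Morrey embedding of Di Nezza et al.\ to get both the H\"older seminorm bound and the $L^\infty$ bound. Your Campanato argument and your sup bound from the vanishing of $u$ outside $\Omega$ are just self-contained replacements for the cited Morrey inequality; for the latter, take $y_0$ at distance slightly larger than $\mathrm{diam}(\Omega)$ from $x$, so that $y_0\notin\overline{\Omega}$.
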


\begin{proof}
Let $u \in W_0^{s+\log, p}(\Omega)$ be arbitrary. First, by applying Proposition~\ref{proposition1} (3), we obtain that for $r \in (0, 1)$,
\begin{equation*}
\begin{aligned}
[u]_{s,p}^{p}
&\leq - \frac{1}{C(N,s,p) \ln r} \|u\|^{p}_{W_{0}^{s+\log,p}(\Omega)}
+ \frac{2^{p}\omega_{N}}{sp} r^{-sp} \|u\|^{p}_{L^{p}(\mathbb{R}^{N})}.
\end{aligned}
\end{equation*}
Combining this estimate with the Poincaré inequality (see Proposition~\ref{proposition3}), we infer that
\begin{equation*}
\begin{aligned}
\iint_{\mathbb{R}^{2N}} \frac{|u(x)-u(y)|^p}{|x-y|^{N+sp}}  dx dy 
& \leq \left( - \frac{1}{C(N,s,p) \ln r}
+ \frac{2^{p} \omega_N}{sp} r^{-sp} C(s,p,N,\Omega)\right)
\|u\|^{p}_{W_{0}^{s+\log,p}(\Omega)} .
\end{aligned}
\end{equation*}
	Consequently, $u \in W^{s,p}(\mathbb{R}^N)$. Under the supercritical assumption $sp > N$, the classical fractional Morrey inequality (see Di Nezza et al. \cite{DiNezza-Palatucci-Valdinoci}) guarantees that $u$ admits a continuous representative, and its H\"older seminorm of order $\beta = s - N/p$ is strictly controlled by the standard fractional Gagliardo seminorm
	\begin{equation}
		\label{MorreyBound}
		[u]_{0,\beta} := \sup_{\substack{x, y \in \Omega \\ x \neq y}} \frac{|u(x)-u(y)|}{|x-y|^\beta} \leq C_{Morrey}[u]_{s,p},
	\end{equation}
	where $C_{Morrey} > 0$ depends only on $N, s$, and $p$. Furthermore, the Morrey embedding ensures uniform $L^\infty$ control via the full fractional Sobolev norm
	\begin{equation}
		\label{LinftyMorrey}
		\|u\|_{L^\infty(\Omega)} \leq C_\infty \left( \|u\|_{L^p(\Omega)}^p +[u]_{s,p}^p \right)^{1/p}.
	\end{equation}
	We now synthesize these bounds to evaluate the full $C^{0,\beta}(\overline{\Omega})$ norm. Utilizing \eqref{MorreyBound}, \eqref{LinftyMorrey}, and the subadditivity of the $p$-th root, we have
	\begin{align}
		\label{FinalHolderSynthesis}
		\|u\|_{C^{0,\beta}(\overline{\Omega})} &= \|u\|_{L^\infty(\Omega)} + [u]_{0,\beta} \nonumber \\
		&\leq C_\infty \left( \|u\|_{L^p(\Omega)}^p + [u]_{s,p}^p \right)^{1/p} + C_{Morrey}[u]_{s,p} \nonumber \\
		&\leq C_\infty \left( C_P [u]_{s+\log, p}^p + C_{int}^p [u]_{s+\log, p}^p \right)^{1/p} + C_{Morrey} C_{int} [u]_{s+\log, p} \nonumber \\
		&= \left( C_\infty (C_P + C_{int}^p)^{1/p} + C_{Morrey} C_{int} \right) [u]_{s+\log, p}.
	\end{align}
	Defining the global embedding constant $C_{emb} := C_\infty (C_P + C_{int}^p)^{1/p} + C_{Morrey} C_{int}$, we arrive at the bound
	\begin{equation}
		\|u\|_{C^{0,\beta}(\overline{\Omega})} \leq C_{emb} \|u\|_{W_0^{s+\log, p}(\Omega)}.
	\end{equation}
	This establishes the continuous embedding $W_0^{s+\log, p}(\Omega) \hookrightarrow C^{0,\beta}(\overline{\Omega})$ directly for any $u \in W_0^{s+\log, p}(\Omega)$, completing the proof without requiring smooth density approximations.
\end{proof}

\begin{theorem}
	\label{StraussTypeCompactEmbedding}
	Let $N \geq 2$, $1 < p < \infty$, and $0 < s < 1$ such that $sp < N$. Let $W_{rad}^{s+\log, p}(\mathbb{R}^N)$ denote the closed subspace of radially symmetric functions in $W^{s+\log, p}(\mathbb{R}^N)$. Then, for every exponent $q \in (p, p_s^*)$, the continuous embedding
	\begin{equation*}
	W_{rad}^{s+\log, p}(\mathbb{R}^N) \hookrightarrow L^q(\mathbb{R}^N)
	\end{equation*}
	is compact.
\end{theorem}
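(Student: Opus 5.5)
The plan is to reduce the statement to the classical Strauss-type compactness for radial functions in the fractional Sobolev space $W^{s,p}_{rad}(\mathbb{R}^N)$. Concretely, we will show that $W_{rad}^{s+\log,p}(\mathbb{R}^N)$ embeds continuously into $W_{rad}^{s,p}(\mathbb{R}^N)$. We then invoke the known compactness of $W^{s,p}_{rad}(\mathbb{R}^N)\hookrightarrow L^q(\mathbb{R}^N)$ for $q\in(p,p_s^*)$ and $N\ge 2$ (Lions' radial lemma in its fractional form, as in Sickel--Skrzypczak or Bisci--R\u{a}dulescu--Servadei).

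\textbf{Step 1: continuous embedding.} We argue exactly as in the proof of Theorem~\ref{FractionalLogarithmicGNInequality}. The decomposition of Proposition~\ref{proposition1}~(3) splits the Gagliardo integral into $\{|x-y|<r\}$ and $\{|x-y|\ge r\}$. That argument never uses the support of $u$, only convexity of $\tau\mapsto\tau^p$ and symmetry. Fixing, say, $r=e^{-1}$, it gives for every $u\in W^{s+\log,p}(\mathbb{R}^N)$
\[
[u]_{s,p}^p\le \frac{1}{C(N,s,p)}[u]_{s+\log,p}^p+\frac{2^p\omega_N}{sp}e^{sp}\|u\|_{L^p(\mathbb{R}^N)}^p\le C_0\|u\|^p_{W^{s+\log,p}(\mathbb{R}^N)}.
\]
Hence the identity map $W^{s+\log,p}(\mathbb{R}^N)\to W^{s,p}(\mathbb{R}^N)$ is bounded and preserves radial symmetry. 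Radial functions form a closed subspace of $W^{s+\log,p}(\mathbb{R}^N)$ because a.e. limits of radial functions are radial. Continuity of $W^{s+\log,p}_{rad}\hookrightarrow L^q$ then follows from Theorem~\ref{FractionalLogarithmicGNInequality}.

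\textbf{Step 2: compactness.} Take a bounded sequence $(u_k)$ in $W^{s+\log,p}_{rad}(\mathbb{R}^N)$. By reflexivity (the Banach-space proposition above), we pass to a subsequence with $u_k\rightharpoonup u$ weakly, and $u$ is radial. By Step~1, $(u_k)$ is bounded in $W^{s,p}_{rad}(\mathbb{R}^N)$, so the radial compactness theorem gives $u_k\to \tilde u$ in $L^q(\mathbb{R}^N)$ along a further subsequence. Weak convergence in $L^p$ identifies $\tilde u=u$.

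If a self-contained argument is preferred over citing the radial theorem, we would proceed in three parts. First, derive a Strauss decay estimate $|u(x)|\le C|x|^{-\gamma}\|u\|_{W^{s,p}}$ for $|x|\ge1$, with some $\gamma>0$ (for $N\ge2$), by averaging over annuli. Second, use the local compactness of Theorem~\ref{EmbeddingsTheorem} on balls $B_R$. Third, control the tails via the interpolation
\[
\int_{|x|>R}|u_k-u|^q\le \|u_k-u\|_{L^\infty(|x|>R)}^{q-p}\|u_k-u\|_{L^p}^p\le CR^{-\gamma(q-p)}.
\]
The strict inequality $q>p$ is what makes the tail small uniformly in $k$, and $q<p_s^*$ is used on the bounded part.

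The main obstacle is the radial decay and tail-smallness step, i.e. the genuinely unbounded-domain part. Everything specific to the logarithmic kernel is absorbed by Step~1, and the rest is the classical fractional radial lemma. We would therefore cite that lemma rather than reprove it, checking only that its hypotheses ($N\ge2$, $p<q<p_s^*$) match the statement.
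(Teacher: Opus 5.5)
Your proof is correct, but it takes a different route from the paper's.

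\textbf{What the two arguments share.} The only logarithm-specific ingredient in both is that the splitting in Proposition~\ref{proposition1}~(3) never uses the support of $u$. It therefore gives $\|u\|_{W^{s,p}(\mathbb{R}^N)}\le C\|u\|_{W^{s+\log,p}(\mathbb{R}^N)}$ on all of $\mathbb{R}^N$.

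\textbf{Where they differ.} From that embedding you quote the classical compactness of $W^{s,p}_{rad}(\mathbb{R}^N)\hookrightarrow L^q(\mathbb{R}^N)$ for $N\ge 2$ and $p<q<p_s^*$. For general $p$ the appropriate reference is Sickel--Skrzypczak, via $W^{s,p}=B^{s}_{p,p}$. The paper instead re-proves that compactness inline, much like your ``self-contained'' sketch. It uses a pointwise Strauss bound $|v(x)|\le C_S|x|^{-(N-sp)/p}\|v\|_{W^{s,p}(\mathbb{R}^N)}$ for $|x|\ge 1$, controls the tail by $\sup_{|x|\ge R}|v_k|^{q-p}\,\|v_k\|_{L^p}^p$, and applies local compactness on $B_R(0)$.

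\textbf{What your reduction buys.} It is shorter, and it covers the whole parameter range. A pointwise Strauss bound is only available when $sp>1$.
\begin{itemize}
\item If $sp<1$, take $R\ge 1$, $\chi\in C_c^\infty(-\frac12,\frac12)$ and $0<\alpha<\frac1p-s$. Then $u(x)=\chi(|x|-R)\,\bigl||x|-R\bigr|^{-\alpha}$ is a radial element of $W^{s,p}(\mathbb{R}^N)$ that is unbounded on the sphere $|x|=R$.
\item The borderline $sp=1$ fails as well, since $W^{1/p,p}(\mathbb{R})\not\hookrightarrow L^\infty$.
\end{itemize}
So the paper's argument, and your sketched alternative, only cover $sp>1$.

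The theorem you cite holds for all $0<s<1$. Its proof uses only the averaged decay $\sup_{|y|\ge R}\int_{B_1(y)}|u|^p\,dx\le CR^{1-N}\|u\|_{L^p(\mathbb{R}^N)}^p$ for radial $u$, which comes from fitting about $R^{N-1}$ disjoint rotated copies of $B_1(y)$ when $N\ge 2$. This is combined with a Lions-type vanishing lemma, or equivalently a local Gagliardo--Nirenberg inequality summed over unit cubes. If you want a self-contained proof, use that averaged bound in place of pointwise decay.

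\textbf{A minor further point.} Applying Theorem~\ref{EmbeddingsTheorem} on $B_R(0)$, as both the paper and your sketch do, first requires a cutoff so that $\eta u\in W^{s+\log,p}_0(B_{2R}(0))$. This follows from the product estimate in the proof of Theorem~\ref{DensityTheorem1}, and your citation route avoids the step altogether.
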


\begin{proof}
	Let $(u_k)$ be a bounded sequence in $W_{rad}^{s+\log, p}(\mathbb{R}^N)$, meaning there exists a constant $M > 0$ such that $\|u_k\|_{W^{s+\log, p}(\mathbb{R}^N)} \leq M$ for all $k \in \mathbb{N}$. We first establish that this sequence is uniformly bounded in the standard fractional Sobolev space $W^{s,p}(\mathbb{R}^N)$. By applying Proposition~\ref{proposition1} (3), we obtain that for every $0 < r < 1$,
	\begin{equation*}
	\begin{aligned}
	[u_k]_{s,p}^{p}
	\leq - \frac{1}{C(N,s,p) \ln r} \|u_k\|^{p}_{W_{0}^{s+\log,p}(\Omega)}
	+ \frac{2^{p}\omega_{N}}{sp} r^{-sp} \|u_k\|^{p}_{L^{p}(\mathbb{R}^{N})}.
	\end{aligned}
	\end{equation*}
	Adding $\|u_k\|_{L^p(\mathbb{R}^N)}^p$ to both sides, we deduce the existence of a constant $C_1 > 0$, independent of $k$, such that $\|u_k\|_{W^{s,p}(\mathbb{R}^N)} \leq C_1 \|u_k\|_{W^{s+\log, p}(\mathbb{R}^N)} \leq C_1 M$. Since $W_{rad}^{s+\log, p}(\mathbb{R}^N)$ is a reflexive Banach space, there exists a subsequence of $(u_k)$, still denoted by $(u_k)$, and a radially symmetric function $u \in W_{rad}^{s+\log, p}(\mathbb{R}^N)$ such that $u_k \rightharpoonup u$ weakly in $W^{s+\log, p}(\mathbb{R}^N)$. Moreover, by the weak lower semicontinuity of the norm, the limit function $u$ satisfies $ \|u\|_{W^{s,p}(\mathbb{R}^N)} \leq C_1 M. $ \\[4pt]
	To prove strong convergence in $L^q(\mathbb{R}^N)$, we exploit the radial symmetry of the sequence. According to the classical fractional Strauss radial decay estimate, any radially symmetric function $v \in W^{s,p}(\mathbb{R}^N)$ satisfies a pointwise bound away from the origin. Specifically, there exists a constant $C_S > 0$, depending only on $N, s$, and $p$, such that for almost every $x \in \mathbb{R}^N$ with $|x| \geq 1$,
	\begin{equation}
	\label{StraussDecay}
	|v(x)| \leq C_S |x|^{-\frac{N-sp}{p}} \|v\|_{W^{s,p}(\mathbb{R}^N)}.
	\end{equation}
	We apply this decay estimate to the difference $v_k := u_k - u$. Since $\|v_k\|_{W^{s,p}(\mathbb{R}^N)} \leq \|u_k\|_{W^{s,p}(\mathbb{R}^N)} + \|u\|_{W^{s,p}(\mathbb{R}^N)} \leq 2 C_1 M$, equation \eqref{StraussDecay} yields the uniform pointwise bound
	\begin{equation}
	\label{vkDecay}
	|v_k(x)| \leq 2 C_S C_1 M |x|^{-\frac{N-sp}{p}} \quad \text{for a.e. } |x| \geq 1.
	\end{equation}
	Now, let $\varepsilon > 0$ be an arbitrary positive number. We split the $L^q(\mathbb{R}^N)$ norm of $v_k$ into an integral over a large ball $B_R(0)$ and an integral over its complement $\mathbb{R}^N \setminus B_R(0)$ for some $R > 1$
	\begin{equation}
	\label{SplitIntegral}
	\|u_k - u\|_{L^q(\mathbb{R}^N)}^q = \int_{B_R(0)} |v_k(x)|^q dx + \int_{\mathbb{R}^N \setminus B_R(0)} |v_k(x)|^q dx.
	\end{equation}
	For the integral over $\mathbb{R}^N \setminus B_R(0)$, we interpolate between $L^p$ and $L^\infty$ using the pointwise decay \eqref{vkDecay}. Since $q > p$, we can write $|v_k(x)|^q = |v_k(x)|^{q-p} |v_k(x)|^p$ and estimate as follows
	\begin{align*}
	\int_{\mathbb{R}^N \setminus B_R(0)} |v_k(x)|^q dx &\leq \left( \sup_{|x| \geq R} |v_k(x)|^{q-p} \right) \int_{\mathbb{R}^N \setminus B_R(0)} |v_k(x)|^p dx \\[4pt]
	&\leq \left( 2 C_S C_1 M R^{-\frac{N-sp}{p}} \right)^{q-p} \|v_k\|_{L^p(\mathbb{R}^N)}^p \\[4pt]
	&\leq (2 C_S C_1 M)^{q-p} (2M)^p R^{-\frac{(N-sp)(q-p)}{p}}.
	\end{align*}
	Since we assume that $q > p$, it follows that the exponent $\frac{(N-sp)(q-p)}{p}$ is strictly positive. Consequently, the term $R^{-\frac{(N-sp)(q-p)}{p}}$ tends to zero as $R \to \infty$. Therefore, one can choose a radius $R > 1$, independent of $k$, sufficiently large such that
	\begin{equation*}
	\int_{\mathbb{R}^N \setminus B_R(0)} |u_k(x) - u(x)|^q  dx < \frac{\varepsilon}{2}.
	\end{equation*}
	Having fixed such a radius $R$, we now consider the integral over the bounded domain $B_R(0)$. By the local compactness of the fractional logarithmic embedding established earlier, the restriction mapping
	\[
	W^{s+\log, p}(\mathbb{R}^N) \hookrightarrow L^q(B_R(0))
	\]
	is compact for every $q \in [1, p_s^*]$. Since $u_k \rightharpoonup u$ weakly in $W^{s+\log, p}(\mathbb{R}^N)$, we deduce that $u_k \to u$ strongly in $L^q(B_R(0))$. Consequently, there exists an integer $K \in \mathbb{N}$ such that for all $k \geq K$,
	\begin{equation*}
	\int_{B_R(0)} |u_k(x) - u(x)|^q  dx < \frac{\varepsilon}{2}.
	\end{equation*}
Substituting the above two estimates into the decomposition \eqref{SplitIntegral}, we infer for all $k \geq K$, $ \|u_k - u\|_{L^q(\mathbb{R}^N)}^q < \varepsilon. $ Consequently, $u_k \to u$ strongly in $L^q(\mathbb{R}^N)$, which yields the compactness of the radial embedding.
\end{proof}

\begin{theorem}
	\label{DensityTheorem1}
	Let $1 < p < \infty$, and $0 < s < 1$. Then, the space of smooth functions with compact support, $C_c^\infty(\mathbb{R}^N)$, is dense in $W^{s+\log, p}(\mathbb{R}^N)$ with respect to the norm $\|\cdot\|_{W^{s+\log, p}(\mathbb{R}^N)}$.
\end{theorem}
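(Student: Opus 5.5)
We prove density by the classical two-step procedure, truncation followed by mollification. Throughout we write the seminorm as an $L^p$ norm on $\mathbb{R}^{2N}$ with respect to the measure $d\mu=\mathbf{k}^{s+\log,+}_{p}(x-y)\,dx\,dy$. The structural fact we use is that $\mathbf{k}^{s+\log,+}_{p}$ is supported in $\{|z|<1\}$ and satisfies
\[
\int_{\mathbb{R}^N}\min\{1,|z|^p\}\,\mathbf{k}^{s+\log,+}_{p}(z)\,dz<\infty .
\]
Indeed, near $0$ the integrand is $|z|^{p-N-sp}(-\ln|z|)$, which is integrable because $p(1-s)>0$. Hence $\mathbf{k}^{s+\log,+}_{p}$ is a Lévy-type kernel, and the argument follows the one for $W^{s,p}$ in Di Nezza et al.

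\textbf{Step 1 (truncation).} Fix $\eta\in C_c^\infty(\mathbb{R}^N)$ with $0\le\eta\le1$, $\eta=1$ on $\mathcal{B}_1$ and $\operatorname{supp}\eta\subset\mathcal{B}_2$. Put $\eta_R=\eta(\cdot/R)$ and $u_R=\eta_R u$.

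Clearly $u_R\to u$ in $L^p$. For the seminorm, write
\[
(u-u_R)(x)-(u-u_R)(y)=(1-\eta_R(x))(u(x)-u(y))+u(y)(\eta_R(y)-\eta_R(x)).
\]
The first piece is dominated by $|u(x)-u(y)|^p$, which is $\mu$-integrable. It tends to $0$ pointwise, so dominated convergence makes its contribution vanish.

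For the second piece, use $|\eta_R(x)-\eta_R(y)|\le \min\{1,\|\nabla\eta\|_\infty |x-y|/R\}$ and integrate first in $x$. This gives the bound
\[
\int|u(y)|^p\,dy\int \min\{1,R^{-p}|z|^p\}\,\mathbf{k}^{s+\log,+}_{p}(z)\,dz .
\]
Since the kernel lives in $|z|<1$, the inner integral is at most $CR^{-p}\to0$. So $u_R\to u$ in $W^{s+\log,p}(\mathbb{R}^N)$, and we may assume $u$ has compact support.

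\textbf{Step 2 (mollification).} Let $\rho_\varepsilon$ be a standard mollifier and set $u_\varepsilon=\rho_\varepsilon*u\in C_c^\infty(\mathbb{R}^N)$; then $u_\varepsilon\to u$ in $L^p$.

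For the seminorm, use translation invariance of the kernel. Set $U(x,y)=u(x)-u(y)$ and view it as an element of $L^p(d\mu)$. Then
\[
u_\varepsilon(x)-u_\varepsilon(y)=\int\rho_\varepsilon(h)\,U(x-h,y-h)\,dh .
\]
By Minkowski's integral inequality,
\[
[u_\varepsilon-u]_{s+\log,p}\le\int\rho_\varepsilon(h)\,\|\tau_hU-U\|_{L^p(d\mu)}\,dh ,
\]
where $\tau_h U(x,y)=U(x-h,y-h)$. Changing variables $(x,y)\mapsto(x,x-z)$ gives
\[
\|\tau_hU-U\|_{L^p(d\mu)}^p=\int_{\mathbb{R}^N}\mathbf{k}^{s+\log,+}_{p}(z)\,\|\tau_h D_zu-D_zu\|_{L^p(dx)}^p\,dz,
\qquad D_zu(x)=u(x)-u(x-z).
\]
This is the $L^p(\mathbb{R}^N;L^p(\mathbb{R}^N,\mathbf{k}\,dz))$ modulus of continuity of translations of the function $x\mapsto D_\cdot u(x)$. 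It tends to $0$ as $h\to0$, by density of $C_c$ in that Bochner space, or equivalently by dominated convergence in $z$. The needed domination is $\|\tau_hD_zu-D_zu\|_p^p\le 2^p\|D_zu\|_p^p$, and $\int \mathbf{k}^{s+\log,+}_{p}(z)\|D_zu\|_p^p\,dz=[u]^p_{s+\log,p}<\infty$, while for each fixed $z$ the translation continuity in $L^p$ gives pointwise convergence. Hence $u_\varepsilon\to u$.

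\textbf{Main obstacle.} The delicate point is Step 2: showing rigorously that $h\mapsto\tau_hU$ is continuous in $L^p(d\mu)$, since $\mu$ is not a finite measure. The $z$-fibered dominated convergence argument above is what we would try first. Step 1 is routine precisely because the positive kernel is truncated at $|z|=1$.
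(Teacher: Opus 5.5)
Your proposal is correct and follows essentially the same route as the paper: truncation by $\eta_R$ with the same algebraic splitting, then mollification controlled by Minkowski's integral inequality and continuity of translations. In the truncation step, dominated convergence handles the first piece, and the Lipschitz bound together with the support of $\mathbf{k}^{s+\log,+}_{p}$ in the unit ball handles the second. The one place you go beyond the paper is the last step, where the paper only invokes ``standard properties of translations in $L^p$''. Your fibering $\|\tau_hU-U\|_{L^p(d\mu)}^p=\int \mathbf{k}^{s+\log,+}_{p}(z)\|\tau_hD_zu-D_zu\|_{L^p}^p\,dz$, with dominated convergence in $z$ (dominant $2^p\|D_zu\|_{L^p}^p$, whose $\mathbf{k}$-integral is $[u]^p_{s+\log,p}$), supplies exactly the justification that is missing there.
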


\begin{proof}
	Let $u \in W^{s+\log, p}(\mathbb{R}^N)$. We first approximate $u$ by compactly supported functions. Let $\eta \in C_c^\infty(\mathbb{R}^N)$ be a standard cut-off function satisfying $0 \leq \eta \leq 1$, $\eta \equiv 1$ on $B_1(0)$, and $\text{supp}(\eta) \subset B_2(0)$. For $R > 0$, define $\eta_R(x) := \eta(x/R)$ and $u_R := u\eta_R$. By the Dominated Convergence Theorem, $u_R \to u$ strongly in $L^p(\mathbb{R}^N)$ as $R \to \infty$. To bound the seminorm of the difference, we utilize the algebraic decomposition $u_R(x) - u(x) - (u_{R}(y) - u(y) = (u(x)-u(y))(\eta_R(x)-1) + u(y)(\eta_R(x)-\eta_R(y))$ and the inequality $|a+b|^p \leq 2^{p-1}(|a|^p + |b|^p)$ to obtain
	\begin{align*}
	[u_R - u]_{s+\log, p}^p &\leq 2^{p-1} \iint_{\mathbb{R}^{2N}} |u(x)-u(y)|^p |\eta_R(x)-1|^p \mathbf{k}^{s+\log,+}_{p}(x-y) dx dy \\
	&\quad + 2^{p-1} \iint_{\mathbb{R}^{2N}} |u(y)|^p |\eta_R(x)-\eta_R(y)|^p \mathbf{k}^{s+\log,+}_{p}(x-y) dx dy \\
	&=: 2^{p-1} J_1(R) + 2^{p-1} J_2(R).
	\end{align*}
	Since $\eta_R \to 1$ pointwise and is uniformly bounded by $1$, the Dominated Convergence Theorem implies
	\begin{equation*}
	\lim_{R \to \infty} J_1(R) = 0.
	\end{equation*}
	For $J_2(R)$, we exploit the Lipschitz bound $|\eta_R(x)-\eta_R(y)| \leq \frac{C}{R}|x-y|$ and the fact that $\text{supp}(\mathbf{k}^{s+\log,+}_{p}(x-y)) = \overline{B_1(0)}$ to obtain
	\begin{equation*}
	J_2(R) \leq \frac{C^p}{R^p} \int_{\mathbb{R}^N} |u(y)|^p dy \int_{|z| \leq 1} |z|^p \mathbf{k}^{s+\log,+}_{p}(z)(z) dz.
	\end{equation*}
	Because $p > sp$, the singularity at the origin is integrable, meaning $\int_{|z| \leq 1} |z|^{p-N-sp}(-p\ln|z|) dz < \infty$. Hence, $J_2(R) \leq \frac{\widetilde{C}}{R^p} \|u\|_{L^p(\mathbb{R}^N)}^p \to 0$ as $R \to \infty$. This establishes that $u_R \to u$ strongly in $W^{s+\log, p}(\mathbb{R}^N)$.\\[4pt]
	We may therefore assume without loss of generality that $u \in W^{s+\log, p}(\mathbb{R}^N)$ has compact support. Let $\rho_\epsilon(x) := \epsilon^{-N}\rho(x/\epsilon)$ be a standard symmetric mollifier and define $u_\epsilon := u * \rho_\epsilon$. Classically, $u_\epsilon \in C_c^\infty(\mathbb{R}^N)$ and $u_\epsilon \to u$ strongly in $L^p(\mathbb{R}^N)$. For the seminorm convergence, applying Minkowski's integral inequality to the difference yields
{\small 	\begin{equation}	\label{MollificationMinkowski}
	\begin{aligned}
	&[u_\epsilon - u]_{s+\log, p}  \leq \int_{\mathbb{R}^N} \rho_\epsilon(z) \left( \iint_{\mathbb{R}^{2N}} |(u(x-z)-u(y-z)) - (u(x)-u(y))|^p \mathbf{k}^{s+\log,+}_{p}(x-y) dx dy \right)^{1/p} dz \\
	&\qquad = \int_{B_\epsilon(0)} \rho_\epsilon(z) [\tau_z u - u]_{s+\log, p} dz,
	\end{aligned}
	\end{equation}}
	where $\tau_z u(x) := u(x-z)$ denotes the translation operator. Standard properties of translations in $L^p$ spaces ensure that ensures that $\lim_{|z| \to 0}[\tau_z u - u]_{s+\log, p} = 0$. Since $\rho_\epsilon$ is supported entirely within $B_\epsilon(0)$ and has integral $1$, taking the limit as $\epsilon \to 0^+$ in \eqref{MollificationMinkowski} implies $\lim_{\epsilon \to 0^+} [u_\epsilon - u]_{s+\log, p} = 0$, concluding the proof.
\end{proof}

\begin{theorem}
	\label{DensityTheorem2}
	Let $\Omega \subset \mathbb{R}^N$ be an open set, let $1 < p < \infty$, and let $0 < s < 1$. Then the space $ C^\infty(\Omega) \cap W^{s+\log, p}(\Omega) $ is dense in $W^{s+\log, p}(\Omega)$ with respect to the norm $\|\cdot\|_{W^{s+\log, p}(\Omega)}$.
\end{theorem}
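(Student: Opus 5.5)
The plan is to adapt the Meyers--Serrin argument for classical Sobolev spaces. The space $W^{s+\log,p}(\Omega)$ is understood with the norm $\|u\|_{L^p(\Omega)}^p+\iint_{\Omega\times\Omega}|u(x)-u(y)|^p\mathbf{k}^{s+\log,+}_p(x-y)\,dx\,dy$.

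\textbf{Step 1: partition of unity.} Take the exhaustion $\Omega_j:=\{x\in\Omega: |x|<j,\ d(x)>1/j\}$ with $\Omega_0=\Omega_{-1}=\emptyset$. Let $U_j:=\Omega_{j+1}\setminus\overline{\Omega_{j-1}}$, and choose a smooth partition of unity $(\psi_j)$ subordinate to $(U_j)$, with $0\le\psi_j\le 1$ and $\sum_j\psi_j\equiv1$ on $\Omega$. Every point of $\Omega$ lies in at most three of the sets $U_j$.

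\textbf{Step 2: each piece lies in the space.} Show that $\psi_ju\in W^{s+\log,p}(\Omega)$, with an estimate. Use the splitting already employed in Theorem~\ref{DensityTheorem1}:
\[
\psi_j(x)u(x)-\psi_j(y)u(y)=\psi_j(x)\bigl(u(x)-u(y)\bigr)+u(y)\bigl(\psi_j(x)-\psi_j(y)\bigr).
\]
This gives
\[
[\psi_ju]^p\le 2^{p-1}[u]^p+2^{p-1}\|\nabla\psi_j\|_\infty^p\|u\|_{L^p}^p\int_{|z|\le1}|z|^{p}\mathbf{k}^{s+\log,+}_p(z)\,dz ,
\]
and the last integral is finite because $p-sp>0$. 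The kernel vanishes for $|z|\ge1$, so only differences at distance less than $1$ interact. This localization is the main reason the argument runs as in the local case.

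\textbf{Step 3: mollify each piece.} Each $\psi_ju$ has compact support in $U_j\Subset\Omega$, and we extend it by zero. Since $\psi_j u$ is compactly supported in $\Omega$, its extension lies in $W^{s+\log,p}(\mathbb{R}^N)$. The $\Omega\times\Omega$ part of its seminorm is controlled by Step 2. The cross part $2\int_{\operatorname{supp}\psi_j}|\psi_ju|^p\int_{\Omega^c}\mathbf{k}^{s+\log,+}_p$ is finite because $\operatorname{dist}(\operatorname{supp}\psi_j,\Omega^c)>0$, so the inner integral is bounded there. By the mollification step of Theorem~\ref{DensityTheorem1} (Minkowski's inequality and continuity of translations), choose $\epsilon_j$ so small that $\operatorname{supp}(\rho_{\epsilon_j}*(\psi_ju))\subset U_j$ and
\[
\|\rho_{\epsilon_j}*(\psi_ju)-\psi_ju\|_{W^{s+\log,p}(\mathbb{R}^N)}<\varepsilon 2^{-j}.
\]
The $W^{s+\log,p}(\Omega)$ norm is dominated by the $\mathbb{R}^N$ norm of zero extensions, since it integrates only over $\Omega\times\Omega$.

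\textbf{Step 4: assemble.} Define $v:=\sum_j\rho_{\epsilon_j}*(\psi_ju)$. This sum is locally finite, so $v\in C^\infty(\Omega)$. Moreover $u-v=\sum_j\bigl(\psi_ju-\rho_{\epsilon_j}*(\psi_ju)\bigr)$, and the triangle inequality for the norm (a norm built from $L^p$ norms, hence subadditive) gives $\|u-v\|\le\sum_j\varepsilon2^{-j}=\varepsilon$. The series must be justified to converge in norm. To do this, first apply the estimate to partial sums, then pass to the limit using Fatou's lemma with pointwise a.e. convergence, which holds by local finiteness. In particular $v\in W^{s+\log,p}(\Omega)$.

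\textbf{Main obstacle.} The delicate point is the nonlocal interaction in Steps 3--4. Unlike the local case, $\rho*(\psi_ju)-\psi_ju$ interacts with points outside $U_j$, so there is no clean decoupling across pieces. I would handle this by estimating every piece in the global $W^{s+\log,p}(\mathbb{R}^N)$ norm of its zero extension, where Theorem~\ref{DensityTheorem1} applies verbatim. The finiteness of the killing term needed for that step follows from the positive distance to $\Omega^c$, and the compact support of $\mathbf{k}^{s+\log,+}_p$ in $\overline{B_1(0)}$ makes it simpler still.
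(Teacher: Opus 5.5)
Your proof is correct and follows essentially the same Meyers--Serrin route as the paper. Both arguments use a partition of unity subordinate to an exhaustion, bound each $\psi_j u$ by the product rule (using the compact support of $\mathbf{k}^{s+\log,+}_p$ and $p(1-s)>0$), mollify each piece via Minkowski's inequality and continuity of translations, and sum the locally finite series with an $\varepsilon 2^{-j}$ error budget.

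Your version is slightly more careful than the paper's in three places:
\begin{itemize}
\item Adding $|x|<j$ to the exhaustion makes each $U_j$ relatively compact even when $\Omega$ is unbounded; the paper's distance-only exhaustion does not.
\item Working with zero extensions in $W^{s+\log,p}(\mathbb{R}^N)$, with the cross term controlled by $\operatorname{dist}(\operatorname{supp}\psi_j,\Omega^c)>0$, justifies the translation argument.
\item Your Fatou argument on partial sums justifies the infinite triangle inequality, which the paper applies without comment.
\end{itemize}
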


\begin{proof}
	Let $u \in W^{s+\log, p}(\Omega)$ and let $\varepsilon > 0$ be an arbitrary positive number. We construct a smooth approximation of $u$ by employing a locally finite partition of unity and carefully controlled local mollifications. We begin by defining an exhaustion of the domain $\Omega$ by strictly interior open subdomains. For each integer $j \geq 1$, we define the sets
	\begin{equation*}
	\Omega_j := \left\{ x \in \Omega : \text{dist}(x, \partial\Omega) > 1/j\right\},
	\end{equation*}
	and for notational convenience, we set $\Omega_j = \emptyset$ for all $j \leq 0$. We then construct a sequence of annular open sets $V_j$ that cover $\Omega$ by defining $V_j := \Omega_{j+2} \setminus \overline{\Omega}_{j-1}$ for all $j \geq 1$. By construction, the collection $\{V_j\}_{j=1}^\infty$ forms a locally finite open cover of $\Omega$. Let $\{\psi_j\}_{j=1}^\infty$ be a smooth partition of unity subordinate to this cover, meaning that $\psi_j \in C_c^\infty(V_j)$, $0 \leq \psi_j \leq 1$, and $\sum_{j=1}^\infty \psi_j(x) = 1$ for all $x \in \Omega$.\\[4pt]
We decompose the function $u$ into a locally finite sum by defining $u_j(x) := u(x)\psi_j(x)$. We must first verify that each $u_j$ belongs to $W^{s+\log, p}(\Omega)$. Since $\psi_j$ is bounded by $1$, the $L^p(\Omega)$ norm is trivially bounded: $\|u_j\|_{L^p(\Omega)} \leq \|u\|_{L^p(\Omega)}$. To estimate the fractional logarithmic Gagliardo seminorm, we utilize the algebraic identity $u_j(x) - u_j(y) = \psi_j(x)(u(x)-u(y)) + u(y)(\psi_j(x)-\psi_j(y))$ and the elementary inequality $|a+b|^p \leq 2^{p-1}(|a|^p + |b|^p)$. This yields
	\begin{align}
	\label{ProductRuleSeminorm}
	[u_j]_{s+\log, p}^p &= \iint_{\Omega \times \Omega} |u_j(x)-u_j(y)|^p \mathbf{k}^{s+\log,+}_{p}(x-y) dx dy \nonumber \\
	&\leq 2^{p-1} \iint_{\Omega \times \Omega} |\psi_j(x)|^p |u(x)-u(y)|^p \mathbf{k}^{s+\log,+}_{p}(x-y) dx dy \nonumber \\
	&\quad + 2^{p-1} \iint_{\Omega \times \Omega} |u(y)|^p |\psi_j(x)-\psi_j(y)|^p \mathbf{k}^{s+\log,+}_{p}(x-y) dx dy.
	\end{align}
	The first integral on the right-hand side of \eqref{ProductRuleSeminorm} is bounded by $2^{p-1}[u]_{s+\log, p}^p$. For the second integral, we exploit the Lipschitz continuity of $\psi_j$. Let $L_j = \|\nabla \psi_j\|_{L^\infty(\Omega)}$ be the Lipschitz constant of $\psi_j$. Since the positive logarithmic kernel $k_{s+\log, p}^+(z) = |z|^{-N-sp}(-p\ln|z|)_+$ is supported entirely within the unit ball $B_1(0)$, we can bound the second integral by
	\begin{equation*}
	2^{p-1} L_j^p \int_\Omega |u(y)|^p \left( \int_{|x-y| \leq 1} |x-y|^p \mathbf{k}^{s+\log,+}_{p}(x-y) dx \right) dy.
	\end{equation*}
	Because $p - sp = p(1-s) > 0$, the singularity at the origin is integrable, and the inner integral evaluates to a finite constant $C_0 = \int_{|z| \leq 1} |z|^{p-N-sp}(-p\ln|z|) dz < \infty$. Thus, the second term is bounded by $2^{p-1} L_j^p C_0 \|u\|_{L^p(\Omega)}^p$, confirming that $u_j \in W^{s+\log, p}(\Omega)$ with compact support in $V_j$. Next, let $\rho \in C_c^\infty(\mathbb{R}^N)$ be a standard symmetric mollifier supported in $B_1(0)$, and define the scaled mollifier $\rho_\delta(x) = \delta^{-N}\rho(x/\delta)$.\\
	 For each $j \geq 1$, we choose a sufficiently small mollification radius $\delta_j > 0$ such that two conditions are met. First, to ensure the mollified function remains strictly inside $\Omega$ and respects the local finiteness of the cover, we require $\delta_j < \text{dist}(\text{supp}(\psi_j), \partial(\Omega_{j+3} \setminus \overline{\Omega}_{j-2}))$. Second, by the continuity of translations in $L^p$ spaces, we can choose $\delta_j$ small enough such that the approximation error satisfies
	\begin{equation}
	\label{MollificationError}
	\|u_j * \rho_{\delta_j} - u_j\|_{W^{s+\log, p}(\Omega)} < \frac{\varepsilon}{2^j}.
	\end{equation}
	The convergence of the seminorm under mollification is justified by Minkowski's integral inequality. Specifically, the difference can be written as an integral of translations, and we have
	\begin{equation*}
	[u_j * \rho_{\delta_j} - u_j]_{s+\log, p} \leq \int_{B_1(0)} \rho(z) [\tau_{\delta_j z} u_j - u_j]_{s+\log, p} dz,
	\end{equation*}
	which vanishes as $\delta_j \to 0$ due to the strong continuity of the translation operator $\tau_h u_j(x) = u_j(x-h)$ in the space $L^p(\Omega \times \Omega, d\mu)$ equipped with the measure $d\mu(x,y) = \mathbf{k}^{s+\log,+}_{p}(x-y) dx dy$.\\[4pt]
	Finally, we define our global smooth approximation as the infinite sum
	\begin{equation*}
	v(x) := \sum_{j=1}^\infty (u_j * \rho_{\delta_j})(x).
	\end{equation*}
	By our geometric restriction on $\delta_j$, the support of $u_j * \rho_{\delta_j}$ is contained within $\Omega_{j+3} \setminus \overline{\Omega}_{j-2}$. Consequently, for any compact subset $K \subset \Omega$, only a finite number of terms in the sum are non-zero. This local finiteness guarantees that $v \in C^\infty(\Omega)$. Furthermore, since $u = \sum_{j=1}^\infty u_j$ almost everywhere in $\Omega$, we can apply the triangle inequality to the full fractional logarithmic norm to obtain
	\begin{equation*}
	\|u - v\|_{W^{s+\log, p}(\Omega)} = \left\| \sum_{j=1}^\infty (u_j - u_j * \rho_{\delta_j}) \right\|_{W^{s+\log, p}(\Omega)} \leq \sum_{j=1}^\infty \|u_j - u_j * \rho_{\delta_j}\|_{W^{s+\log, p}(\Omega)}.
	\end{equation*}
	Substituting the bound from \eqref{MollificationError} into this infinite series yields
	\begin{equation*}
	\|u - v\|_{W^{s+\log, p}(\Omega)} < \sum_{j=1}^\infty \frac{\varepsilon}{2^j} = \varepsilon.
	\end{equation*}
	Since $u \in W^{s+\log, p}(\Omega)$ and $u - v \in W^{s+\log, p}(\Omega)$, it follows that $v \in W^{s+\log, p}(\Omega)$. Consequently, we have constructed a function $v \in C^\infty(\Omega) \cap W^{s+\log, p}(\Omega)$ which can be chosen arbitrarily close to $u$. This establishes the desired density result.
\end{proof}

\section{The Dirichlet eigenvalue problem for the fractional logarithmic $p$-Laplacian}
\label{section5}
In this section, we investigate the fundamental properties of the Dirichlet eigenvalue problem associated with the fractional logarithmic $p$-Laplacian $(-\Delta)_{p}^{s+\log}$. More precisely, we consider the following nonlinear Dirichlet eigenvalue problem.
\begin{equation}\label{eigen}\tag{P}
(-\Delta)_{p}^{s+\log} u = \lambda |u|^{p-2}u \quad \text{in } \Omega, \qquad 
u = 0 \quad \text{in } \mathbb{R}^{N} \setminus \Omega.
\end{equation}
We study the weak formulation of the problem by seeking functions $u \in W^{s+\log,p}_{0}(\Omega)$ such that
\begin{equation}\label{equ25}
\mathcal{J}_{s+\log,p}(u,\varphi)
= \lambda \int_{\Omega} |u|^{p-2}u \varphi  dx,
\quad \forall \varphi \in W^{s+\log,p}_{0}(\Omega).
\end{equation}
The functional $\mathcal{J}_{s+\log,p}$ is introduced in \eqref{equ15}. A function $u$ satisfying \eqref{equ25} is referred to as an eigenfunction corresponding to the eigenvalue $\lambda$ for problem \eqref{eigen}.  We now proceed to establish the global boundedness of such eigenfunctions. To this end, we make use of a classical iterative argument; see, for example, \cite[Theorem~2.2]{Palatucci-Piccinini}.
	
\begin{theorem}\label{theorem2}
	Let $1 < p < \infty$, $0 < s < 1$, and assume that $\mathrm{diam}(\Omega) < e^{-\frac{1}{sp}}$ and $B(N,s,p) \geq 0$. Let $u \in W^{s+\log, p}_{0}(\Omega)$ be a weak solution to \eqref{equ25}. Then, $ u \in L^{\infty}(\Omega). $
\end{theorem}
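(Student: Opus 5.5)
The plan is a De Giorgi iteration in the spirit of \cite[Theorem~2.2]{Palatucci-Piccinini}, using the critical Sobolev inequality of Theorem~\ref{TheoremSobolevInequality}. The geometric hypotheses supply the coercivity that replaces ellipticity.

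\textbf{Step 1: sign of the kernel.} Under $\mathrm{diam}(\Omega)<e^{-1/(sp)}<1$ and $B(N,s,p)\ge 0$, the kernel $\mathbf K^{s+\log}_p(|x-y|)$ is strictly positive on $\Omega\times\Omega$. Indeed, there
\[
\mathbf K^{s+\log}_p(|x-y|)\ge pC(N,s,p)\,\frac{(-\ln|x-y|)_+}{|x-y|^{N+sp}}=p\,\mathbf k^{s+\log,+}_p(x-y).
\]
Interactions with $\Omega^c$ involve only terms depending on one point of $\Omega$, and they are handled as in Proposition~\ref{proposition1}(2),(4).

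\textbf{Step 2: energy inequality for truncations.} Test \eqref{equ25} with $\varphi=(u-k)_+$ for levels $k\ge 0$; this is admissible since truncation is a contraction, so $\varphi\in W^{s+\log,p}_0(\Omega)$. Set $w_k=(u-k)_+$ and $A_k=\{u>k\}$. I would use the elementary inequality
\[
|a-b|^{p-2}(a-b)\big((a-k)_+-(b-k)_+\big)\ge |(a-k)_+-(b-k)_+|^p
\]
on $\Omega\times\Omega$, where the kernel is positive. The exterior interactions take the form $2\int_\Omega |u(x)|^{p-2}u(x)\,w_k(x)\,\kappa(x)\,dx$, where $\kappa(x)=\int_{\Omega^c}\mathbf K^{s+\log}_p(|x-y|)\,dy$. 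By the computation in Proposition~\ref{proposition1}(2) this integral is bounded below by the non-negative quantity found there, since $\ln d+\frac1{sp}\le 0$. Altogether I aim for
\[
c\,[w_k]^p_{s+\log,p}\le \lambda\int_{A_k}|u|^{p-1}w_k\,dx\le \lambda\int_{A_k}(w_k+k)^{p-1}w_k\,dx.
\]
There is a subtlety for the exterior part: $[w_k]_{s+\log,p}$ also counts pairs $x\in\Omega$, $y\in\Omega^c$ with $|x-y|<1$, where $\mathbf K$ may be negative. I would absorb this using Proposition~\ref{proposition1}(1) and (3), paying an extra term $C\|w_k\|^p_{L^p}$ on the right. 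That term is harmless in the iteration.

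\textbf{Step 3: iteration.} Apply Theorem~\ref{TheoremSobolevInequality} to get
\[
\|w_k\|_{L^{p_s^*}}^p\le C\int_{A_k}\big(w_k^p+k^{p-1}w_k\big).
\]
For $h<k$, use $w_k\le w_h$, $|A_k|\le \|w_h\|_p^p/(k-h)^p$ and H\"older's inequality to derive
\[
\|w_k\|_p^p\le \frac{C\,(k/(k-h))^{p}}{(k-h)^{p\epsilon}}\,\|w_h\|_p^{p(1+\epsilon)},\qquad \epsilon=\frac{sp}{N-sp}>0 .
\]
With $k_j=M(1-2^{-j-1})$ and $M$ large relative to $\|u\|_p$, the standard lemma on fast geometric convergence gives $\|w_M\|_p=0$, that is $u\le M$. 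Applying the same argument to $-u$ yields $u\in L^\infty(\Omega)$. If $sp\ge N$, I would instead replace $p_s^*$ by any large exponent via the embedding for a smaller $s'$, using Proposition~\ref{proposition4}.

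\textbf{Main obstacle.} I expect the hardest part to be Step 2, namely controlling the sign-changing kernel on $\Omega\times\Omega^c$ inside the truncated energy. It must be checked carefully that the negative part of $\mathbf K$ at distances $>e^{-1/(sp)}$ contributes only lower-order $L^p(A_k)$ terms and not gradient-type terms.
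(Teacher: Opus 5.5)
Your proposal is correct and follows essentially the same route as the paper. Both run a De Giorgi iteration on truncations $(u-k)_+$ (the paper normalizes $u$ and uses levels $1-2^{-k}$). The energy inequality comes from positivity of $\mathbf K^{s+\log}_p$ on $\Omega\times\Omega$ and of $\int_{\Omega^c}\mathbf K^{s+\log}_p(|x-y|)\,dy$, the negative part is recovered via Proposition~\ref{proposition1}(1), and Theorem~\ref{TheoremSobolevInequality} with H\"older and Chebyshev then gives $U_{k+1}\le C^kU_k^{1+\vartheta}$.

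Two small remarks. First, the H\"older step yields $\epsilon=1-p/p_s^*=sp/N$ rather than $sp/(N-sp)$, which is harmless since any $\epsilon>0$ suffices. Second, your explicit handling of the exterior cross terms, of $-u$, and of the case $sp\ge N$ fills in points the paper's proof leaves implicit.
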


\begin{proof}
	Let $u \in W^{s + \log,p}_0(\Omega)$ be a weak solution of \eqref{equ25}. We define
	\begin{equation}\label{equation}
	v_{0} = \frac{u}{\rho \|u\|_{L^{p}(\Omega)}}, 
	\quad \text{where} \quad 
	\rho = \max \left\{ 1, \|u\|_{L^{p}(\Omega)}^{-1}, C^{\frac{1}{p\vartheta^{2}}} \right\},
	\end{equation}
	where $C>0$ will be specified later.  Since $v_{0} \in W^{s + \log,p}_0(\Omega)$ and $\|v_{0}\|_{L^{p}(\Omega)} = \rho^{-1}$, we introduce the sequence $(w_{k})$ defined by
	\begin{equation*}
	w_0(x) := (v_0(x))^{+}, \qquad
	w_k(x) := \big( v_0(x) - 1 + 2^{-k}\big)^{+},  \text{for } k \in \mathbb{N}^{*}.
	\end{equation*}
	We first note the following elementary properties of the sequence $(w_k)$:
	\[
	w_k = 0 \quad \text{a.e.\ in } \mathbb{R}^{N}\setminus \Omega, 
	\quad \text{and} \quad 
	w_k \in W^{s + \log,p}_0(\Omega),
	\]
	together with
	\begin{equation}\label{Lequ53}
	\begin{cases}
	0 \leq w_{k+1}(x) \leq w_k(x), & \text{a.e.\ in } \mathbb{R}^N,\\[4pt]
	v_0(x) < (2^{k+1} - 1) w_k(x), & \text{for } x \in \{ w_{k+1} > 0 \}.
	\end{cases}
	\end{equation}
	Moreover, we have the inclusion
\begin{equation*}
	\{ w_{k+1} > 0 \} \subseteq \{ w_k > 2^{-(k+1)} \}, 
	\quad \text{for all } k \in \mathbb{N},
\end{equation*}
	which implies
	\begin{align}\label{Lequ54}
	\big| \{ w_{k+1} > 0 \} \big|
	\leq 2^{p(k+1)} \| w_{k} \|_{L^{p}(\Omega)}^{p}.
	\end{align}
We now define $ U_k := \|w_k\|_{L^{p}(\Omega)}^{p}, $ and we claim that \(U_k \to 0\) as \(k \to \infty\). Indeed, under the assumptions \(\mathrm{diam}(\Omega) < e^{-\frac{1}{sp}}\) and \(B(N,s,p) \geq 0\), and by using \eqref{Lequ53} together with the inequality
\[
|a^+ - b^+|^p \leq |a - b|^{p-2}(a - b)(a^+ - b^+), \quad \text{for all } a,b \in \mathbb{R},
\]
we obtain
\begin{align*}
\mathcal{J}_{s+\log,p}(w_{k+1}, w_{k+1})
&= \frac{p}{2}\big( \mathcal{J}_{+}(w_{k+1}, w_{k+1}) - \mathcal{J}_{-}(w_{k+1}, w_{k+1}) \big)
+ \frac{B(N,s,p)C(N,s,p)}{2} \mathcal{J}_{s}(w_{k+1}, w_{k+1}) \\[4pt]
&= \frac{p}{2}\Bigg(
- C(N,s,p)\iint_{\Omega \times \Omega}
\frac{|w_{k+1}(x)-w_{k+1}(y)|^{p} \ln|x-y|}{|x-y|^{N+sp}}  dx dy \\[4pt]
&\quad - 2 C(N,s,p) \int_{\Omega} w_{k+1}(x)^{p}
\left(\int_{\mathbb{R}^{N}\setminus \Omega}
\frac{\ln|x-y|}{|x-y|^{N+sp}} dy \right) dx
\Bigg) \\[4pt]
&\quad + \frac{B(N,s,p)C(N,s,p)}{2} \iint_{\mathbb{R}^{2N}} 
\frac{|w_{k+1}(x) - w_{k+1}(y)|^{p}}{|x-y|^{N+sp}}  dx dy \\[4pt]
&\leq \left(\rho \|u\|_{L^{p}(\Omega)}\right)^{1-p}
\mathcal{J}_{s+\log,p}(u, w_{k+1}) \\[4pt]
&= \lambda \int_{\{ w_{k+1} > 0 \} } |v_{0}|^{p-2} v_{0} w_{k+1}  dx \\[4pt]
&\leq \lambda (2^{k+1} - 1)^{p-1} U_k.
\end{align*}
Furthermore, combining Proposition \ref{proposition1} (1) with \eqref{Lequ53}, we deduce that
\begin{equation} \label{equ41}
\begin{aligned}
\| w_{k+1}\|_{W^{s+ \log, p}_0(\Omega)}^{p}
&\leq \frac{2 \lambda}{p} (2^{k+1} - 1)^{p-1} U_k + \mathcal{J}_{-}(w_{k+1}, w_{k+1}) \\[4pt]
&\leq \frac{2 \lambda}{p} (2^{k+1} - 1)^{p-1} U_k
+ \frac{2^{p} \omega_N}{(s p)^2}  \| w_{k+1} \|_{L^{p}(\Omega)}^p \\[4pt]
&\leq C (2^{k+1} + 1)^{p-1}   U_k.
\end{aligned}
\end{equation}
Moreover, by H\"older's inequality, together with \eqref{Lequ54} and Theorem~\ref{TheoremSobolevInequality}, we obtain
\begin{equation}\label{Lequ25}
\begin{aligned}
U_{k+1} &= \int_{\{ w_{k+1} > 0 \}} w_{k+1}^{p} dx 
\leq C(N,s,p,\Omega)  \big|\{ w_{k+1} > 0 \}\big|^{1 - \frac{p}{p_s^{*}}}
\| w_{k+1} \|_{W^{s+ \log, p}_0(\Omega)}^{p}\\[4pt]
&\leq  C(N,s,p,\Omega)  2^{p(k+1)(1 - \frac{p}{p_s^{*}})} U_{k}^{1 - \frac{p}{p_s^{*}}}
\| w_{k+1} \|_{W^{s+ \log, p}_0(\Omega)}^{p}.
\end{aligned}
\end{equation}
Thus, combining \eqref{equ41} and \eqref{Lequ25}, we deduce
\begin{equation}\label{Lequ26}
U_{k+1} \leq C^{k} U_k^{1+\vartheta}, \quad \text{for all } k \in \mathbb{N},
\end{equation}
for some constants \(C>1\) and \(\vartheta = \frac{sp}{N}\). Consequently,
\begin{equation}\label{Lequ28}
U_k \leq \frac{\eta^{k}}{\rho^{p}}, \quad \text{for all } k \in \mathbb{N},
\end{equation}
where \(\eta \in (0,1)\). The estimate follows by induction. Indeed, from \eqref{equation} we have
\[
U_0 \leq \|v_0\|_{L^p(\Omega)}^{p} = \frac{1}{\rho^{p}}.
\]
Assuming \eqref{Lequ28} holds for some \(k \in \mathbb{N}\), we deduce from \eqref{Lequ26} that
\[
U_{k+1} \leq \frac{\eta^{k+1}}{\rho^{p}}.
\]
Since \(\eta \in (0,1)\), it follows that
\begin{equation}\label{equ56}
\lim_{k \to \infty} U_k = 0.
\end{equation}
Furthermore, since \(w_k \to (v_0-1)^+\) a.e. in \(\mathbb{R}^N\), relation \eqref{equ56} implies that \(w_k \to 0\) almost everywhere in \(\Omega\). Hence, \(v_0 \leq 1\) a.e. in \(\Omega\), and consequently, $ \|u\|_{L^{\infty}(\Omega)} \leq \rho \|u\|_{L^{p}(\Omega)}. $ This concludes that \(u \in L^{\infty}(\Omega)\).
\end{proof}

We present here a minimum principle for positive eigenfunctions. More precisely, we establish this principle for positive weak supersolutions associated with the following problem:
\begin{equation}\label{eigen2}
(-\Delta)_{p}^{s+\log} u = 0 \quad \text{in } \Omega, \qquad 
u = 0 \quad \text{in } \mathbb{R}^{N} \setminus \Omega.
\end{equation}
To this end, we first extend a logarithmic estimate recently obtained by Di Castro et al. in \cite[Lemma 1.3]{DiCastro-Kuusi-Palatucci} to the framework of the fractional logarithmic $p$-Laplacian considered here. We believe that this extension is of independent interest and may find applications in other related contexts. Let $x_0$ be an arbitrary fixed point in $\Omega$, and for every $r>0$, let $\mathcal{B}_{r}(x_{0})$ denote the open ball of radius $r$ centered at $x_{0}$.

\begin{lemma}\label{lemma1}
	Let $1 < p < \infty$ and $0 < s < 1$, and assume that $\mathrm{diam}(\Omega) < e^{-\frac{1}{sp}}$ and $B(N,s,p) \geq 0$. Let $u \in W^{s+\log, p}_{0}(\Omega)$ be a supersolution to problem \eqref{eigen2} such that $u \geq 0$ in $\mathcal{B}_{R}(x_{0}) \Subset \Omega$. Then, for every $0 < \delta < 1$ and $r > 0$ such that $\mathcal{B}_{2r}(x_{0}) \subset \mathcal{B}_{\frac{R}{2}}(x_{0})$, one has the estimate
	\begin{equation*}
	\begin{aligned}
	& \int_{\mathcal{B}_{2r}(x_{0})} \int_{\mathcal{B}_{2r}(x_{0})} \mathbf{K}^{s+\log}_{p}(|x - y|) \left| \ln \left(\frac{\delta + u(x)}{\delta + u(y)}\right) \right|^{p} dx dy \\[4pt]
	& \leq C r^{N-sp} \Bigg\{ C(N,s,p) \omega_N^2 2^{2N} \left[ \frac{B(N,s,p) - p\ln r}{sp} - \frac{1}{s^2 p} \right] \\[4pt] 
	&\quad + \omega_{N} 2^{N} \int_{\mathbb{R}^{N} \setminus \mathcal{B}_{R}(x_{0})} \big( \mathbf{K}^{s+\log}_{p}\big( \tfrac{1}{2}|y - x_{0}| \big) \big)_{+} \big(u(y)\big)_{-}^{ p-1} dy \\
	&\quad + c C(N,s,p)  \frac{\omega_N^2}{N p(1-s)}  2^{N + 2 p(1-s)}  \left[ B(N,s,p) - p\ln(4r) + \frac{1}{1-s} \right] \Bigg\},
	\end{aligned}
	\end{equation*}
	where $u^{-} := \max\{-u,0\}$, and $C, c > 0$ denote universal constants depending only on $p$.
\end{lemma}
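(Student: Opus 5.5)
We follow the proof of the logarithmic lemma of Di Castro, Kuusi and Palatucci \cite[Lemma 1.3]{DiCastro-Kuusi-Palatucci}, with the kernel $|x-y|^{-N-sp}$ replaced by $\mathbf{K}^{s+\log}_{p}(|x-y|)$. Since $\mathrm{diam}(\Omega)<e^{-1/sp}<1$ and $B(N,s,p)\ge 0$, the kernel is strictly positive on $\Omega\times\Omega$. In particular, every pair of points used in the local estimates satisfies $|x-y|<1$, so $\mathbf{K}^{s+\log}_{p}(|x-y|)=C(N,s,p)(B(N,s,p)-p\ln|x-y|)|x-y|^{-N-sp}>0$ there.

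We fix a cut-off $\phi\in C_c^\infty(\mathcal{B}_{3r/2}(x_0))$ with $0\le\phi\le 1$, $\phi\equiv1$ on $\mathcal{B}_{r}(x_0)$ and $|\nabla\phi|\le C/r$. We then test the supersolution inequality $\mathcal{J}_{s+\log,p}(u,\eta)\ge 0$ with $\eta=(u+\delta)^{1-p}\phi^p$. This test function is admissible because $u+\delta\ge\delta>0$ on the support of $\phi$, and the fractional chain/product rule, together with Proposition \ref{proposition1} to control the $\mathbf{k}^{s+\log,+}_{p}$-part, places it in $W^{s+\log,p}_0(\Omega)$.

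We split the double integral into three parts: $\mathcal{B}_{2r}\times\mathcal{B}_{2r}$, $\mathcal{B}_{2r}\times(\mathcal{B}_R\setminus\mathcal{B}_{2r})$ (plus its symmetric counterpart), and $\mathcal{B}_{2r}\times(\mathbb{R}^N\setminus\mathcal{B}_R)$.

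\emph{Local part.} On $\mathcal{B}_{2r}\times\mathcal{B}_{2r}$ we use the purely algebraic pointwise inequality from \cite{DiCastro-Kuusi-Palatucci}:
\[
|a-b|^{p-2}(a-b)\Big(\tfrac{\phi(x)^p}{a^{p-1}}-\tfrac{\phi(y)^p}{b^{p-1}}\Big)\le -c\Big|\ln\tfrac ab\Big|^p\phi(y)^p+C|\phi(x)-\phi(y)|^p,
\]
applied with $a=u(x)+\delta$ and $b=u(y)+\delta$. Multiplying by the positive kernel keeps the good term $-c\iint\mathbf{K}^{s+\log}_{p}|\ln\frac{\delta+u(x)}{\delta+u(y)}|^p\phi^p$. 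The error term is bounded by $C r^{-p}\iint_{\mathcal{B}_{2r}\times\mathcal{B}_{2r}}|x-y|^{p}\mathbf{K}^{s+\log}_{p}(|x-y|)\,dx\,dy$. We evaluate this explicitly in polar coordinates using
\[
\int_0^{4r}\rho^{p(1-s)-1}\bigl(B(N,s,p)-p\ln\rho\bigr)\,d\rho
=\frac{(4r)^{p(1-s)}}{p(1-s)}\Big[B(N,s,p)-p\ln(4r)+\frac1{1-s}\Big].
\]
Together with the factor $|\mathcal{B}_{2r}|\sim\omega_N 2^N r^N/N$, this gives the third bracket in the statement, with the scaling $r^{-p}r^{p(1-s)}r^N=r^{N-sp}$.

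\emph{Nonlocal parts.} For $y$ outside $\mathcal{B}_{2r}$ and $x\in\operatorname{supp}\phi$, we have $|y-x|\ge\frac12|y-x_0|$ (or $\ge r/2$). On $\mathcal{B}_R\setminus\mathcal{B}_{2r}$ we have $u\ge0$, so the integrand is bounded by $\mathbf{K}^{s+\log}_{p}(|x-y|)$. Integrating over $|x-y|\ge r/2$ gives
\[
\int_{r}^{\infty}\rho^{-1-sp}\bigl(B(N,s,p)-p\ln\rho\bigr)\,d\rho=\frac{r^{-sp}}{sp}\Big[B(N,s,p)-p\ln r-\frac1{s}\Big],
\]
up to harmless constants; only the positive part of the kernel is needed here. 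Multiplying by $|\mathcal{B}_{2r}|$ produces the first bracket. On $\mathbb{R}^N\setminus\mathcal{B}_R$ the integrand is controlled by $\delta^{1-p}(\delta+u(y)_-)^{p-1}$ times $(\mathbf{K}^{s+\log}_{p}(\frac12|y-x_0|))_+$, which gives the tail term. Here we use the fact that the radial profile $\rho\mapsto(\mathbf{K}^{s+\log}_{p}(\rho))_+$ is nonincreasing, and we discard the sign-changing negative part as an upper bound. The constant-order pieces $\delta^{1-p}\cdot\delta^{p-1}$ are absorbed into the first bracket.

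Finally, we rearrange, use $\phi\equiv1$ on the inner ball, and enlarge the region to $\mathcal{B}_{2r}\times\mathcal{B}_{2r}$ (through the usual covering/scaling step as in \cite{DiCastro-Kuusi-Palatucci}). This yields the stated estimate.

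The main obstacle is the sign change of the kernel in the tail. Monotonicity of $\mathbf{K}^{s+\log}_{p}$ and the comparison $\mathbf{K}^{s+\log}_{p}(|x-y|)\le C(\mathbf{K}^{s+\log}_{p}(\frac12|y-x_0|))_+$ fail where $\ln$ dominates. We handle this by keeping only positive parts, which is legitimate because the relevant integrands there have a definite sign. A second delicate point is making sure the logarithmic factor does not spoil absorption in the local part; positivity of the kernel on $\Omega\times\Omega$ from the diameter assumption is exactly what allows this.
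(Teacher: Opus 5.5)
Your argument follows the paper's proof. It uses the same test function $(u+\delta)^{1-p}\phi^p$, the same split into $\mathcal{B}_{2r}\times\mathcal{B}_{2r}$, the annulus and the exterior, the same two explicit radial integrals, and the same fact that $\rho\mapsto(\mathbf{K}^{s+\log}_p(\rho))_+$ is nonincreasing.

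In two places you are more careful than the paper's write-up. First, the paper drops the weight $\phi(y)^p$ from the logarithmic term, which is not legitimate because $\phi$ vanishes near $\partial\mathcal{B}_{2r}$. Second, you correctly flag that discarding the negative part of the kernel in the exterior needs a sign argument, which the paper omits. The reason is that $\mathbf{K}^{s+\log}_p(|x-y|)<0$ forces $|x-y|>\operatorname{diam}\Omega$, hence $y\notin\Omega$, $u(y)=0$ and $u(x)-u(y)=u(x)\ge 0$.

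Your final enlargement to $\mathcal{B}_{2r}\times\mathcal{B}_{2r}$ needs no covering argument. Simply rerun the argument at scale $2r$: take $\phi\equiv1$ on $\mathcal{B}_{2r}$, $\operatorname{supp}\phi\subset\mathcal{B}_{3r}$, $|\nabla\phi|\le c/r$, and split at $\mathcal{B}_{4r}$. This is allowed because $\mathcal{B}_{2r}\subset\mathcal{B}_{R/2}$ gives $4r\le R$. The resulting brackets are bounded by the stated ones up to factors depending on $N$ and $p$. Also, for $x\in\operatorname{supp}\phi$ and $y$ outside the splitting ball you only get $|x-y|\ge\frac14|y-x_0|$, not $\frac12|y-x_0|$. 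The paper makes the same slip, and it affects only constants.

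The step that does not deliver the displayed inequality is the tail. Your bound $\delta^{1-p}(\delta+u(y)_-)^{p-1}(\cdots)_+$ produces $\delta^{1-p}\int_{\mathbb{R}^N\setminus\mathcal{B}_R(x_0)}\bigl(\mathbf{K}^{s+\log}_p(\tfrac12|y-x_0|)\bigr)_+u(y)_-^{p-1}\,dy$. Since $0<\delta<1$, the factor $\delta^{1-p}>1$ cannot be absorbed into a constant depending only on $p$. So \emph{which gives the tail term} is not justified. The paper's proof produces the same $2^{p-1}\delta^{1-p}$ and drops it without comment, so the defect lies in the displayed statement as much as in your argument.

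There are two ways to repair it:
\begin{enumerate}
\item Keep $\delta^{1-p}$ in front of the tail, which is the Di Castro--Kuusi--Palatucci form.
\item Observe that under the standing hypotheses the tail vanishes identically. Test the supersolution inequality with $u_-\in W^{s+\log,p}_0(\Omega)$. On $\Omega\times\Omega$, where $\mathbf{K}^{s+\log}_p>0$, use $|a-b|^{p-2}(a-b)(a_--b_-)\le-|a_--b_-|^p$. On $\Omega\times(\mathbb{R}^N\setminus\Omega)$, where $u(y)=0$, use the exact identity $|a|^{p-2}a\,a_-=-a_-^p$, so the sign of the kernel there is irrelevant. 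This gives $0\le\mathcal{J}_{s+\log,p}(u,u_-)\le-\mathcal{J}_{s+\log,p}(u_-,u_-)$, and Proposition~\ref{proposition1}~(4) forces $u_-\equiv0$. In particular, the hypothesis $u\ge0$ in $\mathcal{B}_R(x_0)$ is automatic in this setting.
\end{enumerate}
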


\begin{proof}
As in \cite[Lemma 1.3]{DiCastro-Kuusi-Palatucci}, let $\delta>0$ be a real parameter, and let $\phi \in C_c^{\infty}\big(\mathcal{B}_{\frac{3r}{2}}(x_{0})\big)$ be a cutoff function such that
\begin{equation*}
0 \leq \phi \leq 1, \qquad \phi \equiv 1 \ \text{in } \mathcal{B}_{r}(x_{0}), \qquad \text{and} \qquad |D\phi| \leq \frac{c}{r} \ \text{in } \mathcal{B}_{\frac{3r}{2}}(x_{0}).
\end{equation*}
We employ the test function $\eta = (u+\delta)^{1-p}\phi^{p}$ in the weak formulation of problem \eqref{eigen2}. By decomposing the domain of integration, we obtain
	\begin{equation*}
	\begin{aligned}
	& \underbrace{\int_{\mathcal{B}_{2r}(x_{0})} \int_{\mathcal{B}_{2r}(x_{0})} \mathbf{K}^{s+\log}_{p}(|x - y|) \left| u(x) - u(y) \right| ^{p-2}(u(x) - u(y)) \left(\eta(x) - \eta(y)\right) dx dy}_{\mathbf{I}_{1}} \\
	&+ 2 \underbrace{\int_{\mathbb{R}^{N} \setminus \mathcal{B}_{2r}(x_{0})} \int_{\mathcal{B}_{2r}(x_{0})}\mathbf{K}^{s+\log}_{p}(|x - y|) \left| u(x) - u(y) \right| ^{p-2}(u(x) - u(y)) \left(\eta(x) - \eta(y)\right) dx dy}_{\mathbf{I}_{2}} \leq 0.
	\end{aligned}
	\end{equation*}
	On the interior domain $\mathcal{B}_{2r}(x_{0}) \times \mathcal{B}_{2r}(x_{0})$, the condition $|x-y| \leq \mathrm{diam}(\Omega) < e^{-\frac{1}{sp}}$ ensures that the kernel remains strictly positive. Proceeding as in the proof of \cite[Lemma 1.3]{DiCastro-Kuusi-Palatucci}, the term $\mathbf{I}_{1}$ satisfies
	\begin{equation*}
	\begin{aligned}
	\mathbf{I}_{1} &\leq - \frac{1}{c} \int_{\mathcal{B}_{2r}(x_{0})} \int_{\mathcal{B}_{2r}(x_{0})} \mathbf{K}^{s+\log}_{p}(|x - y|) \left| \ln \left(\frac{\delta + u(x)}{\delta + u(y)}\right) \right|^{p} dx dy \\
	&\quad + c \int_{\mathcal{B}_{2r}(x_{0})} \int_{\mathcal{B}_{2r}(x_{0})} \mathbf{K}^{s+\log}_{p}(|x - y|) \left|\phi(x) - \phi(y)\right|^p dx dy,
	\end{aligned}
	\end{equation*}
	where $c = c(p) > 0$. Using the gradient estimate $|D\phi| \leq \frac{c}{r}$, we bound the second integral. For $x, y \in \mathcal{B}_{2r}(x_0)$, we have $|x-y| \leq 4r$. Consequently, we estimate the inner integral by extending the domain of the difference variable $z = x-y$ to $\mathcal{B}_{4r}(0)$
	\begin{equation*}
	\begin{aligned}
	\int_{\mathcal{B}_{2r}(x_{0})} \int_{\mathcal{B}_{2r}(x_{0})} \mathbf{K}^{s+\log}_{p}(|x - y|) \big| \phi(x) - \phi(y)\big|^{p} dx dy &\leq c r^{-p} \int_{\mathcal{B}_{2r}(x_{0})} \left( \int_{\mathcal{B}_{4r}(0)} |z|^{p} \mathbf{K}^{s+\log}_{p}(|z|) dz \right) dx \\[4pt]
	&\leq c  r^{-p} \frac{\omega_N}{N}(2r)^N \int_{\mathcal{B}_{4r}(0)} |z|^{p} \mathbf{K}^{s+\log}_{p}(|z|) dz.
	\end{aligned}
	\end{equation*} 
	We compute the exact radial integral, noting that $p - sp = p(1-s) > 0$
	\begin{align*}
	\int_{\mathcal{B}_{4r}(0)} |z|^{p} \mathbf{K}^{s+\log}_{p}(|z|) dz &= C(N,s,p) \omega_N \int_0^{4r} \rho^{p(1-s) - 1} \big( B(N,s,p) - p\ln\rho \big) d\rho \\[4pt]
	&= C(N,s,p)  \omega_N \frac{(4r)^{p(1-s)}}{p(1-s)} \left[ B(N,s,p) - p\ln(4r) + \frac{1}{1-s} \right].
	\end{align*}
	Multiplying by the volume factor yields the exact scaling $r^{-p} r^{p(1-s)} r^N = r^{N-sp}$. Thus
	\begin{equation}
	\label{equ102}
	\begin{aligned}
	\mathbf{I}_{1} &\leq - \frac{1}{c} \int_{\mathcal{B}_{2r}(x_{0})} \int_{\mathcal{B}_{2r}(x_{0})} \mathbf{K}^{s+\log}_{p}(|x - y|) \left| \ln \left(\frac{\delta + u(x)}{\delta + u(y)}\right) \right|^{p} dx dy \\[4pt]
	&\quad + c C(N,s,p) \frac{\omega_N^2}{N p(1-s)} 2^{N + 2 p(1-s)}  r^{N-sp} \left[ B(N,s,p) - p\ln(4r) + \frac{1}{1-s} \right].
	\end{aligned}
	\end{equation}
	For the second term $\mathbf{I}_{2}$, because the test function $\eta = (u+\delta)^{1-p}\phi^p$ is supported strictly in $\mathcal{B}_{3r/2}(x_0) \Subset \mathcal{B}_{2r}(x_0)$, we have $\eta(y) = 0$ for all $y \in \mathbb{R}^N \setminus \mathcal{B}_{2r}(x_0)$. Thus, the difference $\eta(x) - \eta(y)$ reduces exactly to $\eta(x)$, and we split the exterior domain into the annulus $\mathcal{B}_{R}(x_{0}) \setminus \mathcal{B}_{2r}(x_{0})$ and $\mathbb{R}^{N} \setminus \mathcal{B}_{R}(x_{0})$
	\begin{equation*}
	\begin{aligned}
	\mathbf{I}_{2} &\leq 2 \int_{\mathcal{B}_{R}(x_{0}) \setminus \mathcal{B}_{2r}(x_{0})} \int_{\mathcal{B}_{2r}(x_{0})} \mathbf{K}^{s+\log}_{p}(|x - y|) \left| u(x) - u(y) \right|^{p-2} (u(x) - u(y)) (u(x)+\delta)^{1-p}\phi^{p}(x) dx dy \\[4pt]
	&\quad + 2 \int_{\mathbb{R}^{N} \setminus \mathcal{B}_{R}(x_{0})} \int_{\mathcal{B}_{2r}(x_{0})} \mathbf{K}^{s+\log}_{p}(|x - y|) \left| u(x) - u(y) \right|^{p-2} (u(x) - u(y)) (u(x)+\delta)^{1-p}\phi^{p}(x) dx dy.
	\end{aligned}
	\end{equation*}
	Since $u \geq 0$ in $\mathcal{B}_{R}(x_{0}) \Subset \Omega$, and observing that $\mathbf{K}_p^{s+\log}(|x-y|) > 0$ on $\mathcal{B}_{2r}(x_{0}) \times \left(\mathcal{B}_{R}(x_{0}) \setminus \mathcal{B}_{2r}(x_{0})\right)$, the first term can be estimated exactly as
	\begin{equation*}
	\begin{aligned}
	& \int_{\mathcal{B}_{R}(x_{0}) \setminus \mathcal{B}_{2r}(x_{0})} \int_{\mathcal{B}_{2r}(x_{0})} \mathbf{K}^{s+\log}_{p}(|x - y|) \left| u(x) - u(y) \right|^{p-2} (u(x) - u(y)) (u(x)+\delta)^{1-p}\phi^{p}(x) dx dy \\[4pt]
	& \leq \int_{\mathcal{B}_{R}(x_{0}) \setminus \mathcal{B}_{2r}(x_{0})} \int_{\mathcal{B}_{2r}(x_{0})} \mathbf{K}^{s+\log}_{p}(|x - y|) \left(u(x) - u(y) \right) _{+}^{p-1} (u(x)+\delta)^{1-p}\phi^{p}(x) dx dy \\[4pt]
	& \leq \int_{\mathcal{B}_{R}(x_{0}) \setminus \mathcal{B}_{2r}(x_{0})} \int_{\mathcal{B}_{2r}(x_{0})} \mathbf{K}^{s+\log}_{p}(|x - y|)\phi^{p}(x) dx dy.
	\end{aligned}
	\end{equation*}
	For the second term, where the kernel may change sign, we bound it from above using the positive part of the kernel and the elementary inequality $\big( u(x) - u(y)\big)_{+}^{p-1} \leq 2^{p-1}\left( u(x)^{p-1} + \big( u(y)\big)_{-}^{p-1}\right)$
	\begin{equation*}
	\begin{aligned}
	& \int_{\mathbb{R}^{N} \setminus \mathcal{B}_{R}(x_{0})} \int_{\mathcal{B}_{2r}(x_{0})} \mathbf{K}^{s+\log}_{p}(|x - y|) \left| u(x) - u(y) \right|^{p-2} (u(x) - u(y)) (u(x)+\delta)^{1-p}\phi^{p}(x) dx dy \\[4pt]
	& \leq 2^{p-1} \int_{\mathbb{R}^{N} \setminus \mathcal{B}_{R}(x_{0})} \int_{\mathcal{B}_{2r}(x_{0})} \big(\mathbf{K}^{s+\log}_{p}(|x - y|)\big)_{+}\phi^{p}(x) dx dy \\[4pt]
	& \quad + 2^{p-1} \delta^{1-p} \int_{\mathbb{R}^{N} \setminus \mathcal{B}_{R}(x_{0})} \int_{\mathcal{B}_{2r}(x_{0})} \big(\mathbf{K}^{s+\log}_{p}(|x - y|)\big)_{+} \big(u(y)\big)_{-}^{ p-1}\phi^{p}(x) dx dy.
	\end{aligned}
	\end{equation*}
	For $x \in \mathcal{B}_{2r}(x_{0})$ and $y \in \mathbb{R}^{N} \setminus \mathcal{B}_{2r}(x_{0})$, the triangle inequality gives $|y - x_{0}| \leq 2|x - y|$. Since the positive part of the kernel is decreasing, $\big(\mathbf{K}^{s+\log}_{p}(|x - y|)\big)_{+} \leq \big(\mathbf{K}^{s+\log}_{p}(\frac{1}{2}|y - x_0|)\big)_{+}$. Integrating out the $x$ variable bounds the inner integral by $|\mathcal{B}_{2r}| = \frac{\omega_N}{N} (2r)^N$. For the first term, we have
	\begin{equation}
		\label{equ100}
		\begin{aligned}
			\int_{\mathbb{R}^{N} \setminus \mathcal{B}_{2r}(x_{0})} \left( \mathbf{K}^{s+\log}_{p}(\tfrac{1}{2}|y - x_{0}|)\right)_{+} dy &= 2^N C(N,s,p) \omega_N \int_r^\infty \rho^{-1-sp} \left( B(N,s,p) - p\ln\rho \right) d\rho \\
			&= 2^N C(N,s,p) \omega_N r^{-sp} \left[ \frac{B(N,s,p) - p\ln r}{sp} - \frac{1}{s^2 p} \right].
		\end{aligned}
	\end{equation}
	Multiplying \eqref{equ100} by $|\mathcal{B}_{2r}|$ yields the precise scaling relation $r^{N} r^{-sp} = r^{N-sp}$. By combining this estimate with the corresponding integral involving $u_-$, we obtain the desired bound for $\mathbf{I}_2$. Finally, adding the estimates of $\mathbf{I}_1$ and $\mathbf{I}_2$ and rearranging the resulting terms completes the proof.
\end{proof}

By Lemma \ref{lemma1}, and arguing as in the proof of \cite[Theorem A.1]{Brasco-Franzina}, we obtain the following minimum principle.
\begin{theorem}\label{theorem3}
	Let $1 < p < \infty$ and $0 < s < 1$, and let $\Omega \subset \mathbb{R}^{N}$ be a bounded, open, and connected set. Let 
	$u \in W^{s+ \log,p}_0(\Omega)$ be a weak supersolution of the problem \eqref{eigen2}. Assume that $u \geq 0$ in $\Omega$ and $u \not\equiv 0$ in $\Omega$. Then $ u > 0  $ a.e. in $ \Omega. $
\end{theorem}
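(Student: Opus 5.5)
The plan follows \cite[Theorem A.1]{Brasco-Franzina}: a logarithmic estimate yields a measure-theoretic minimum principle on balls, and connectedness of $\Omega$ propagates it to all of $\Omega$. Throughout we work under the standing hypotheses of Lemma~\ref{lemma1}, namely $\mathrm{diam}(\Omega)<e^{-\frac{1}{sp}}$ and $B(N,s,p)\ge 0$. Under these, $\mathbf{K}^{s+\log}_p(|x-y|)\ge C(N,s,p)\,|x-y|^{-N-sp}$ on $\Omega\times\Omega$, since $-p\ln|x-y|>1/s\ge 0$ there.

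\textbf{Local step.} Argue by contradiction. Suppose $Z:=\{u=0\}$ has positive measure. Since $u\not\equiv 0$, the set $\{u>0\}$ also has positive measure. Because $\Omega$ is connected, we can find a ball $\mathcal{B}_{2r}(x_0)$ with $\mathcal{B}_{4r}(x_0)\Subset\Omega$ such that both $Z\cap\mathcal{B}_{r}(x_0)$ and $\{u>0\}\cap\mathcal{B}_r(x_0)$ have positive measure. To do this, cover a path joining density points of the two sets by a chain of small overlapping balls, and choose the first ball in the chain that meets $Z$ in positive measure while still meeting $\{u>0\}$ in positive measure.

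Apply Lemma~\ref{lemma1} with $R=4r$. Because $u\ge 0$ in all of $\mathbb{R}^N$ (it is nonnegative in $\Omega$ and vanishes outside), the tail term involving $(u)_-$ is zero. The right-hand side is then a constant $M(r)$ independent of $\delta$. Using the kernel lower bound above and $|x-y|\le 4r$, we get
\[
\int_{\mathcal{B}_{r}(x_0)}\int_{\mathcal{B}_{r}(x_0)}\Big|\ln\frac{\delta+u(x)}{\delta+u(y)}\Big|^p\,dx\,dy\;\le\; \frac{(4r)^{N+sp}}{C(N,s,p)}\,M(r),
\]
uniformly in $\delta\in(0,1)$.

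\textbf{Contradiction.} Restrict the integral to $x\in\{u>0\}\cap\mathcal{B}_r(x_0)$ and $y\in Z\cap\mathcal{B}_r(x_0)$. There the integrand equals $|\ln((\delta+u(x))/\delta)|^p$, which tends to $+\infty$ as $\delta\to0^+$ for each such $x$. By Fatou's lemma the left-hand side diverges, which contradicts the uniform bound. Hence $|Z|=0$, that is, $u>0$ a.e.\ in $\Omega$.

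An alternative to the chain argument is to show that the set of points near which $u>0$ a.e.\ is both open and closed in $\Omega$. The same estimate gives: if $|Z\cap \mathcal{B}_r|>0$ then $u=0$ a.e.\ in $\mathcal{B}_r$. Both the set where $u>0$ a.e.\ locally and the set where $u=0$ a.e.\ locally are then open and cover $\Omega$, so connectedness forces $u\equiv 0$ or $u>0$ a.e.

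\textbf{Main obstacle.} The delicate point is checking that Lemma~\ref{lemma1} really applies with a bound uniform in $\delta$. In particular, the test function $(u+\delta)^{1-p}\phi^p$ must belong to $W^{s+\log,p}_0(\Omega)$, which follows since $t\mapsto(t+\delta)^{1-p}$ is Lipschitz on $[0,\infty)$. The sign issues of the kernel on the nonlocal part must also be controlled. Here they are harmless, because $\mathrm{diam}(\Omega)$ keeps the kernel positive on $\Omega\times\Omega$, and outside $\Omega$ we have $u=0$, so the $(u)_-$ tail vanishes identically.
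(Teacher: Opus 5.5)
Your proposal is correct and follows essentially the paper's route. The paper proves this theorem only by invoking Lemma~\ref{lemma1} together with the Brasco--Franzina minimum-principle argument: the logarithmic estimate has a vanishing tail because $u\ge 0$ on all of $\mathbb{R}^N$, so the bound is uniform in $\delta$, and connectedness of $\Omega$ then spreads positivity; your chain-of-balls/Fatou version and your open--closed version are both equivalent forms of that step. You are also right to state the hypotheses $\mathrm{diam}(\Omega)<e^{-1/(sp)}$ and $B(N,s,p)\ge 0$ explicitly: the theorem statement omits them, but the paper's proof needs them too, since it relies on Lemma~\ref{lemma1}.
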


 It is well-known that if $u \in W^{s+\log,p}_{0}(\Omega)$ is a local extremum of the functional $\mathcal{J}_{s+\log,p}(u,u)$ under the constraint $\|u\|_{L^{p}(\Omega)} = 1$, then $u$ is an eigenfunction corresponding to the eigenvalue $\lambda = \mathcal{J}_{s+\log,p}(u,u)$, which is the smallest eigenvalue that can be obtained in this variational framework. For this reason, we refer to this value as the first eigenvalue, since this variational characterization naturally leads to the definition:
\begin{equation}\label{equ21}
\lambda^{s+\log}_{1,p} := \inf \left\{ \mathcal{J}_{s+\log,p}(u,u) \;:\; u \in W^{s+\log,p}_{0}(\Omega), \ \|u\|_{L^{p}(\Omega)} = 1 \right\}.
\end{equation}

\begin{theorem}\label{theorem6}
Let $1 < p < \infty$ and $0 < s < 1$. The first eigenvalue $\lambda^{s+\log}_{1,p}$ is attained, and its associated extremal function $\phi^{s+\log}_{1,p}$ is an eigenfunction of  \eqref{eigen} corresponding to $\lambda^{s+\log}_{1,p}$. Moreover, if $\mathrm{diam}(\Omega) < e^{-\frac{1}{sp}}$ and $B(N,s,p) \geq 0$, then the following assertions hold:
\begin{enumerate}
	\item The first eigenvalue $\lambda^{s+\log}_{1,p}$ is strictly positive, and the corresponding eigenfunction $\phi^{s+\log}_{1,p}$ can be chosen to be nonnegative in $\Omega$.
	
	\item The eigenvalue $\lambda^{s+\log}_{1,p}$ is simple, in the sense that if $v \in W^{s+\log,p}_{0}(\Omega)$ is another eigenfunction associated with $\lambda^{s+\log}_{1,p}$, then there exists a constant $c > 0$ such that
    $ \phi^{s+\log}_{1,p} = c v. $

	\item Let $u \in W^{s+\log,p}_{0}(\Omega)$ be an eigenfunction associated with an eigenvalue $\lambda$ such that $u > 0$ in $\Omega$. Then, it follows that $\lambda = \lambda^{s+\log}_{1,p}$.
\end{enumerate}
\end{theorem}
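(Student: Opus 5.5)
The plan is to use the direct method of the calculus of variations for existence, and then the structural results of Section~3 and Section~5 for the qualitative assertions.

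\textbf{Existence.} Take a minimizing sequence $(u_k)\subset W^{s+\log,p}_0(\Omega)$ with $\|u_k\|_{L^p(\Omega)}=1$ and $\mathcal{J}_{s+\log,p}(u_k,u_k)\to\lambda^{s+\log}_{1,p}$. Boundedness below and coercivity come from the identity
\[
\tfrac{p}{2}\mathcal{J}_+(u,u)=\mathcal{J}_{s+\log,p}(u,u)+\tfrac{p}{2}\mathcal{J}_-(u,u)-\tfrac{B(N,s,p)C(N,s,p)}{2}\mathcal{J}_s(u,u),
\]
combined with three estimates:
\begin{itemize}
\item Proposition~\ref{proposition1}(1) bounds $\mathcal{J}_-$ by a constant times $\|u\|_{L^p}^p=1$.
\item When $B<0$, Proposition~\ref{proposition1}(3) bounds $\mathcal{J}_s$. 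I choose $r$ so small that $|B|/(2C(N,s,p)(-\ln r))<p/4$; the $[u]^p_{s+\log,p}$ term can then be absorbed into the left-hand side.
\item As a consequence, $\mathcal{J}_{s+\log,p}(u,u)\ge \tfrac{p}{4}[u]^p_{s+\log,p}-C$ on the constraint set. In particular $\lambda^{s+\log}_{1,p}>-\infty$ and $(u_k)$ is bounded in $W^{s+\log,p}_0(\Omega)$.
\end{itemize}
Reflexivity of the space and Theorem~\ref{EmbeddingsTheorem} then give a subsequence with $u_k\rightharpoonup u$ weakly, $u_k\to u$ strongly in $L^p(\Omega)$ (so $\|u\|_{L^p}=1$), and $u_k\to u$ a.e.

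Passing to the limit in each piece:
\begin{itemize}
\item $\mathcal{J}_+$ is weakly lower semicontinuous, being a convex continuous seminorm power.
\item $\mathcal{J}_-(u_k,u_k)\to\mathcal{J}_-(u,u)$, because its kernel is integrable away from the diagonal and the bound in Proposition~\ref{proposition1}(1) is an $L^p$ bound. Applying that bound to differences, together with strong $L^p$ convergence, gives the convergence.
\item For $\mathcal{J}_s$: if $B\ge0$ use Fatou's lemma. If $B<0$, I need $\mathcal{J}_s(u_k,u_k)\to\mathcal{J}_s(u,u)$. For this I apply Proposition~\ref{proposition1}(3) to $u_k-u$: the Gagliardo seminorm of $u_k-u$ is at most $\varepsilon(r)\cdot$bounded plus $C_r\|u_k-u\|^p_{L^p}$, so $[u_k-u]_{s,p}\to0$.
\end{itemize}
Hence $u$ is a minimizer. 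The Lagrange multiplier rule, using the Gateaux differentiability of the $p$-homogeneous functional with derivative $p\,\mathcal{J}_{s+\log,p}(u,\varphi)$ up to the normalization in \eqref{equ15}, yields \eqref{equ25} with $\lambda=\lambda^{s+\log}_{1,p}$.

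\textbf{Assertion (1).} Under $\mathrm{diam}(\Omega)<e^{-1/(sp)}$ and $B\ge0$, Proposition~\ref{proposition1}(4) gives $\mathcal{J}_{s+\log,p}(|u|,|u|)\le\mathcal{J}_{s+\log,p}(u,u)$, so $|\phi|$ is also a minimizer and we may take $\phi\ge0$. Positivity of $\lambda$ comes from the same proposition: $\mathcal{J}_{s+\log,p}\ge\tfrac{BC}{2}\mathcal{J}_s\ge0$, and equality $0$ would force $u=0$, which contradicts $\|u\|_{L^p}=1$. To get strict positivity of the infimum rather than mere nonnegativity, note that the infimum is attained at $\phi\neq0$. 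The minimum principle, Theorem~\ref{theorem3}, applied to the nonnegative supersolution $\phi$ (since $\lambda\phi^{p-1}\ge0$), gives $\phi>0$ a.e., and Theorem~\ref{theorem2} gives $\phi\in L^\infty$.

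\textbf{Assertions (2) and (3).} Both rely on the Díaz--Saa inequality, Lemma~\ref{Lemma2}, with $r=p$. Its hypothesis $\mathrm{diam}(\Omega)<e^{B/p}$ holds because $B\ge0$ and the diameter is below $e^{-1/(sp)}<1$.

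For (3), let $u>0$ be an eigenfunction for $\lambda$ and $\phi>0$ the first one. Testing \eqref{equ25} for $u$ with $(u^p-\phi^p)/u^{p-1}$ and for $\phi$ with $(\phi^p-u^p)/\phi^{p-1}$, then subtracting, gives
\[
0\le(\lambda-\lambda_1)\int_\Omega(u^p-\phi^p)\,dx .
\]
Normalizing $\|u\|_p=\|\phi\|_p$ makes the right side vanish, so equality holds in \eqref{equ17}, hence $u=c\phi$ and $\lambda=\lambda_1$.

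For (2), a first eigenfunction $v$ satisfies $\mathcal{J}(|v|,|v|)\le\mathcal{J}(v,v)$, so $|v|$ is a minimizer. By the equality case in Proposition~\ref{proposition1}(4), $v$ does not change sign, and Theorem~\ref{theorem3} gives $|v|>0$. The argument of (3) then applies.

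\textbf{Main obstacle.} The hardest point is that Lemma~\ref{Lemma2} requires $u/\phi,\ \phi/u\in L^\infty$, and this is not implied by the results available. I would first try to avoid it by using the convexity of $\mathcal{W}$ from Proposition~\ref{pro3} directly. Take $w=\tfrac12(u^p+\phi^p)$ with both functions normalized. Then $\mathcal{W}(w)\le\tfrac{\lambda_1}{p}$ with strict inequality unless $u=c\phi$, while $\|w^{1/p}\|_p=1$ forces $\mathcal{W}(w)\ge\lambda_1/p$. This argument needs only $u,\phi\in\dot V^p_+$ and handles (2) cleanly. For (3), I would compare via the truncations $\phi_\varepsilon=\phi+\varepsilon$, following the Picone-type argument (as in Brasco--Franzina), and let $\varepsilon\to0$.
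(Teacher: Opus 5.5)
Your existence argument and assertion (1) follow the paper's proof, and you are more explicit than the paper about coercivity when $B(N,s,p)<0$. For (2) you take a different and cleaner route. The paper shifts both eigenfunctions by $\varepsilon$, tests with the D\'{\i}az--Saa quotients, and lets $\varepsilon\to0$. It then invokes the equality case of Lemma~\ref{Lemma2} for the unshifted pair, so it tacitly needs the ratio bounds $u/\phi,\ \phi/u\in L^\infty(\Omega)$ that you flagged. You instead apply the ray-strict convexity of Proposition~\ref{pro3} to $w=\frac12(u^p+\phi^p)$. That argument needs only $u,\phi>0$ in $W^{s+\log,p}_0(\Omega)$, with neither Theorem~\ref{theorem2} nor a limit passage.

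The genuine gap is assertion (3). Your main route again needs $u/\phi,\ \phi/u\in L^\infty(\Omega)$, which nothing available provides. The hidden-convexity trick does not transfer: with $\|u\|_{L^p}=\|\phi\|_{L^p}=1$ it only gives $\lambda_1\le p\,\mathcal{W}(w)\le\frac12(\lambda+\lambda_1)$, i.e.\ the trivial direction $\lambda\ge\lambda_1$, because $u$ is merely a critical point and not a minimizer. Your Picone fallback, as described, regularizes the wrong function. With $\phi+\varepsilon$ in the denominator you are testing the equation of $\phi$ with $u^p/(\phi+\varepsilon)^{p-1}$, which again yields only $\lambda_1\le\lambda$.

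What is needed, and what the paper does, is to test the equation of $u$ with $\varphi_\varepsilon=\phi^p(u+\varepsilon)^{1-p}$. This is admissible because $\phi\in L^\infty$ by Theorem~\ref{theorem2}. On $\Omega\times\Omega$ the Picone inequality can be integrated against $\mathbf{K}^{s+\log}_p>0$. On $\Omega\times(\mathbb{R}^N\setminus\Omega)$ the kernel changes sign, but there the Picone difference equals $\big(u(x)/(u(x)+\varepsilon)\big)^{p-1}\phi(x)^p\le\phi(x)^p$, independently of $y$. Since $\int_{\mathbb{R}^N\setminus\Omega}\mathbf{K}^{s+\log}_p(|x-y|)\,dy>0$, you integrate in $y$ first. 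This gives $\lambda\int_\Omega\big(u/(u+\varepsilon)\big)^{p-1}\phi^p\,dx\le\lambda_1$, and letting $\varepsilon\to0$ with $u>0$ a.e.\ yields $\lambda\le\lambda_1$.

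If you prefer to keep D\'{\i}az--Saa, run it for $u+\varepsilon$ and $\phi+\varepsilon$; both are bounded by Theorem~\ref{theorem2}, and the cross term is handled the same way. Letting $\varepsilon\to0$ gives $(\lambda-\lambda_1)\int_\Omega(u^p-\phi^p)\,dx\ge0$. Since $tu$ is an eigenfunction for every $t>0$, choosing $\|tu\|_{L^p}<\|\phi\|_{L^p}$ gives $\lambda\le\lambda_1$ without any equality case.

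Finally, the two tested identities must be \emph{added}, as in \eqref{equ20}, not subtracted; the minus sign in \eqref{equ17} is a misprint. Subtracting would give $(\lambda+\lambda_1)\int_\Omega(u^p-\phi^p)\,dx$ rather than your (correct) displayed inequality.
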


\begin{proof}
A minimizer can be obtained via the direct method of the calculus of variations. To this end, we define
	\[
	\mathbf{I} = \inf \left\{ \Phi_{s+ \log, p}(u) \;:\; u \in \mathcal{C} \right\},
	\quad \text{where} \quad
	\Phi_{s+ \log, p}(u) := \mathcal{J}_{s+\log,p}(u,u),
	\]
	and
	\[
	\mathcal{C} = \left\{ u \in W^{s+\log,p}_{0}(\Omega) \;:\; \|u\|_{L^{p}(\Omega)} = 1 \right\}.
	\]
	By the definition of the infimum, there exists a sequence $(u_n) \subset \mathcal{C}$ such that $ \Phi_{s+ \log, p}(u_n) \leq \mathbf{I} + \frac{1}{n} \leq \mathbf{I} + 1. $ From Proposition \ref{proposition1} (1) and (3), the sequence $(u_n)$ is bounded in $W_0^{s+\log,p}(\Omega)$. Hence, by Theorem \ref{EmbeddingsTheorem}, there exist a function $u \in W_0^{s+\log,p}(\Omega)$ and a subsequence (still denoted by $(u_n)$) such that
	\begin{equation}\label{equ22}
	u_n \rightharpoonup u_{0} \quad \text{in } W_0^{s+\log,p}(\Omega), \qquad
	u_n \to u_{0}  \text{in } L^{q}(\Omega)\ \text{for all } 1 \leq q \leq p_s^{*}, \qquad
	u_n(x) \to u_{0}(x) \ \text{a.e. in } \Omega.
	\end{equation}
	Moreover, since $ \|u_n\|_{L^{p}(\Omega)} \to \|u_{0}\|_{L^{p}(\Omega)}, $ as $ n \to \infty $ and $\|u_n\|_{L^{p}(\Omega)} = 1$ for all $n$ it follows that $\|u_{0}\|_{L^{p}(\Omega)} = 1$, and hence $u \in \mathcal{C}$. On the other hand, again by Proposition \ref{proposition1} (1), we obtain
	\begin{equation*}
	0 \leq \mathcal{J}_{-}(u_{n} - u_{0}, u_{n} - u_{0})
	\leq \frac{2^{p} \omega_N}{(s p)^2}  \| u_{n} - u_{0}\|_{L^{p}(\Omega)}^p \to 0 \quad \text{as } n \to \infty.
	\end{equation*}
	Thus,
	\begin{equation}\label{equ23}
	\lim_{n\to \infty} \mathcal{J}_{-}(u_{n}, u_{n}) = \mathcal{J}_{-}(u_{0}, u_{0}).
	\end{equation}
Furthermore, in view of Proposition \ref{proposition1} (3) and the boundedness of $(u_n)$ in $W_0^{s+\log,p}(\Omega)$, we deduce that
\begin{equation*}
\begin{aligned}
\mathcal{J}_{s}(u_{n} - u_{0}, u_{n} - u_{0})
& \leq - \frac{1}{C(N,s,p) \ln r} \|u_{n} - u_{0}\|_{W_{0}^{s+\log,p}(\Omega)} 
+ \frac{2^{p} \omega_N}{sp} r^{-sp} \|u_{n} - u_{0} \|^{p}_{L^{p}(\Omega)}\\[4pt]
& \leq - \frac{C}{C(N,s,p) \ln r}
+ \frac{2^{p} \omega_N}{sp} r^{-sp} \|u_{n} - u_{0}\|^{p}_{L^{p}(\Omega)}, 
\end{aligned}
\end{equation*}
for any $0 < r < 1$, where $C>0$. Using \eqref{equ22}, we pass to the limit as $n \to \infty$, and then as $r \to 0^{+}$, to obtain
	\begin{equation}\label{equ24}
	\lim_{n\to \infty} \mathcal{J}_{s}(u_{n}, u_{n}) = \mathcal{J}_{s}(u_{0}, u_{0}).
	\end{equation}
Combining the properties of the weak topology with \eqref{equ23}-\eqref{equ24}, we deduce that
\begin{equation*}
\begin{aligned}
\Phi_{s+\log, p}(u_{0})
&= \mathcal{J}_{s+\log,p}(u_{0}, u_{0}) \\[4pt]
&\leq \frac{p}{2}\left( \liminf_{n \to \infty}\mathcal{J}_{+}(u_{n}, u_{n}) - \lim_{n \to \infty}\mathcal{J}_{-}(u_{n}, u_{n})\right)
+ \frac{B(N, s, p) C(N, s, p)}{2} \lim_{n \to \infty}\mathcal{J}_{s}(u_{n}, u_{n}) \\[4pt]
&\leq \liminf_{n \to \infty}\left(
\dfrac{p}{2}\left(\mathcal{J}_{+}(u_{n}, u_{n}) - \mathcal{J}_{-}(u_{n}, u_{n})\right)
+ \frac{B(N, s, p) C(N, s, p)}{2} \mathcal{J}_{s}(u_{n}, u_{n})
\right) \\[4pt]
&= \liminf_{n \to \infty} \Phi_{s+\log, p}(u_{n}) \\[4pt]
&\leq \liminf_{n \to \infty}\left(\mathbf{I} + \frac{1}{n}\right)
\leq \mathbf{I}.
\end{aligned}
\end{equation*}
Therefore, it follows that $\mathbf{I} = \Phi_{s+\log, p}(u_{0})$, and consequently $u_{0}$ attains the minimum of $\Phi_{s+\log, p}$ over $\mathcal{C}$.  Hence, there exists a Lagrange multiplier $\lambda$ such that
\[
\mathcal{J}_{s+\log,p}\big(u_{0}, \varphi\big)
= \lambda \int_{\Omega} \left| u_{0} \right|^{p-2} u_{0} \varphi 
\quad \text{for all } \varphi \in W^{s+\log,p}_{0}(\Omega).
\]
By choosing $\varphi = u_{0}$ and invoking \eqref{equ21}, we deduce that $\lambda = \lambda^{s+\log}_{1,p}$. Therefore, the function $u_{0}$  is an eigenfunction of \eqref{eigen} associated with the first eigenvalue $\lambda^{s+\log}_{1,p}$. Hence, we denote it hereafter by $\phi^{s+\log}_{1,p}$. Assume that $\mathrm{diam}(\Omega) < e^{-\frac{1}{sp}}$ and that $B(N,s,p) \geq 0$. Then, it follows that

\medskip

\noindent\text{(1)} From Proposition~\ref{proposition1} (4), it follows immediately that
\[
\lambda^{s+\log}_{1,p} = \mathcal{J}_{s+\log,p}\big(\phi^{s+\log}_{1,p}, \phi^{s+\log}_{1,p}\big) > 0.
\]
Moreover, $\phi^{s+\log}_{1,p}$ does not change sign. Indeed, again by Proposition~\ref{proposition1} (4), we have that $|\phi^{s+\log}_{1,p}| \in \mathcal{C}$, and
\[
\lambda^{s+\log}_{1,p}
= \mathcal{J}_{s+\log,p}\big(\phi^{s+\log}_{1,p}, \phi^{s+\log}_{1,p}\big)
\geq \mathcal{J}_{s+\log,p}\big(|\phi^{s+\log}_{1,p}|, |\phi^{s+\log}_{1,p}|\big).
\]
By the definition of the first eigenvalue, we deduce that
\[
\mathcal{J}_{s+\log,p}\big(\phi^{s+\log}_{1,p}, \phi^{s+\log}_{1,p}\big)
= \mathcal{J}_{s+\log,p}\big(|\phi^{s+\log}_{1,p}|, |\phi^{s+\log}_{1,p}|\big).
\]
Consequently, by invoking Proposition~\ref{proposition1} (4) once again, we deduce that $\phi^{s+\log}_{1,p}$ does not change sign. Hence, any eigenfunction corresponding to $ \lambda^{s+\log}_{1,p} $ is either nonnegative or nonpositive in $ \Omega $.\\
\text{(2)} First, in view of (1), the eigenfunction $\phi^{s+\log}_{1,p}$ can be chosen to be nonnegative. Consequently, by Theorem~\ref{theorem3}, we infer that $\phi^{s+\log}_{1,p} > 0$ a.e. in $\Omega$. Moreover, Theorem~\ref{theorem2} guarantees that $\phi^{s+\log}_{1,p} \in L^{\infty}(\Omega)$. Now, let $v \in W^{s+\log,p}_{0}(\Omega)$ be another eigenfunction associated with $\lambda^{s+\log}_{1,p}$. We may therefore assume that $v > 0$ a.e. in $\Omega$ and $v \in L^{\infty}(\Omega)$. For any fixed $\varepsilon > 0$, we have
\[
\frac{(\phi^{s+\log}_{1,p}+\varepsilon)^{p}
	- (v+\varepsilon)^{p}}{(\phi^{s+\log}_{1,p}+\varepsilon)^{p-1}}
\quad \text{and} \quad
\frac{(v+\varepsilon)^{p}
	- (\phi^{s+\log}_{1,p}+\varepsilon)^{p}}{(v+\varepsilon)^{p-1}}
\in W^{s+\log,p}_{0}(\Omega).
\]
Indeed, it suffices to distinguish the following two cases.

\medskip
\noindent\textbf{Case 1:} $v(x) > v(y)$ and $\phi^{s+\log}_{1,p}(x) > \phi^{s+\log}_{1,p}(y)$.  
By the Lagrange Mean Value Theorem, we obtain
\begin{align*}
&\left|
\frac{(v+\varepsilon)^p(x)}{(\phi^{s+\log}_{1,p}+\varepsilon)^{p-1}(x)}
-
\frac{(v+\varepsilon)^p(y)}{(\phi^{s+\log}_{1,p}+\varepsilon)^{p-1}(y)}
\right|
\leq
\left|
\frac{(v+\varepsilon)^p(x)}{(\phi^{s+\log}_{1,p}+\varepsilon)^{p-1}(x)}
-
\frac{(v+\varepsilon)^p(y)}{(\phi^{s+\log}_{1,p}+\varepsilon)^{p-1}(x)}
\right| \\[4pt]
&\quad+
\left|
\frac{(v+\varepsilon)^p(y)}{(\phi^{s+\log}_{1,p}+\varepsilon)^{p-1}(x)}
-
\frac{(v+\varepsilon)^p(y)}{(\phi^{s+\log}_{1,p}+\varepsilon)^{p-1}(y)}
\right| \\[4pt]
&\leq
\frac{p \max\{(v+\varepsilon)^{p-1}(x),(v+\varepsilon)^{p-1}(y)\}}
{(\phi^{s+\log}_{1,p}+\varepsilon)^{p-1}(x)}
 |v(x)-v(y)| \\[4pt]
&\quad+
\frac{(p-1) (v+\varepsilon)^p(y) 
	\max\{(\phi^{s+\log}_{1,p}+\varepsilon)^{p-2}(x),
	(\phi^{s+\log}_{1,p}+\varepsilon)^{p-2}(y)\}}
{(\phi^{s+\log}_{1,p}+\varepsilon)^{p-1}(x)
	(\phi^{s+\log}_{1,p}+\varepsilon)^{p-1}(y)}
 |\phi^{s+\log}_{1,p}(x)-\phi^{s+\log}_{1,p}(y)| \\[4pt]
&\leq
p\left(\frac{v(x)+\varepsilon}{\phi^{s+\log}_{1,p}(x)+\varepsilon}\right)^{p-1}
|v(x)-v(y)| \\[4pt]
&\quad+
(p-1)\left(\frac{v(y)+\varepsilon}{\phi^{s+\log}_{1,p}(y)+\varepsilon}\right)^{p}
|\phi^{s+\log}_{1,p}(x)-\phi^{s+\log}_{1,p}(y)| \\[4pt]
&\leq
C \left(p,\left\|\frac{v+\varepsilon}{\phi^{s+\log}_{1,p}+\varepsilon}\right\|_{L^\infty(\Omega)}\right)
\left(|v(x)-v(y)|
+ |\phi^{s+\log}_{1,p}(x)-\phi^{s+\log}_{1,p}(y)|\right).
\end{align*}

\medskip
\noindent\textbf{Case 2:} $v(x) < v(y)$ and $v(x) > v(y)$ (mixed monotonicity between $v$ and $\phi^{s+\log}_{1,p}$).  
A similar argument yields
\begin{align*}
&\left|
\frac{(v+\varepsilon)^p(x)}{(\phi^{s+\log}_{1,p}+\varepsilon)^{p-1}(x)}
-
\frac{(v+\varepsilon)^p(y)}{(\phi^{s+\log}_{1,p}+\varepsilon)^{p-1}(y)}
\right|
\leq
\frac{p \max\{(v+\varepsilon)^{p-1}(x),(v+\varepsilon)^{p-1}(y)\}}
{(\phi^{s+\log}_{1,p}+\varepsilon)^{p-1}(x)}
 |v(x)-v(y)| \\[4pt]
&\quad+
\frac{(p-1) (v+\varepsilon)^p(y) 
	\max\{(\phi^{s+\log}_{1,p}+\varepsilon)^{p-2}(x),
	(\phi^{s+\log}_{1,p}+\varepsilon)^{p-2}(y)\}}
{(\phi^{s+\log}_{1,p}+\varepsilon)^{p-1}(x)
	(\phi^{s+\log}_{1,p}+\varepsilon)^{p-1}(y)}
 |\phi^{s+\log}_{1,p}(x)-\phi^{s+\log}_{1,p}(y)| \\[4pt]
&\leq
C \left(p,\left\|\frac{v+\varepsilon}{\phi^{s+\log}_{1,p}+\varepsilon}\right\|_{L^\infty(\Omega)}\right)
\left(|v(x)-v(y)|
+|\phi^{s+\log}_{1,p}(x)-\phi^{s+\log}_{1,p}(y)|\right),
\end{align*}
for some constant $C>0$.  We then choose the above test functions in~\eqref{equ25}, satisfied respectively by $\phi^{s+\log}_{1,p}$ and $v$. Summing the resulting identities and applying Lemma~\ref{Lemma2} (noting that $\mathrm{diam}(\Omega) < e^{-\frac{1}{sp}}$ and $B(N,s,p) \geq 0$ imply $\mathrm{diam}(\Omega) < e^{\frac{B(N,s,p)}{p}}$), we obtain
\begin{equation*}
\begin{aligned}
0 \leq\;& \mathcal{J}_{s+\log,p}\left(\phi^{s+\log}_{1,p}, 
\frac{(\phi^{s+\log}_{1,p}+\varepsilon)^{p} - (v+\varepsilon)^{p}}{(\phi^{s+\log}_{1,p}+\varepsilon)^{p-1}}\right)  - \mathcal{J}_{s+\log,p}\left(v, 
\frac{(v+\varepsilon)^{p} - (\phi^{s+\log}_{1,p}+\varepsilon)^{p}}{(v+\varepsilon)^{p-1}}\right) \\[4pt]
=\;& \lambda^{s+\log}_{1,p} \int_{\Omega} 
\left(\frac{(\phi^{s+\log}_{1,p})^{p-1}}{(\phi^{s+\log}_{1,p}+\varepsilon)^{p-1}} 
- \frac{v^{p-1}}{(v+\varepsilon)^{p-1}}\right)
\left((\phi^{s+\log}_{1,p}+\varepsilon)^{p} - (v+\varepsilon)^{p}\right) dx.
\end{aligned}
\end{equation*}
Passing to the limit as $\varepsilon \to 0$, and using Fatou's lemma together with the dominated convergence theorem, we deduce
\begin{equation*}
\mathcal{J}_{s+\log,p}\left(\phi^{s+\log}_{1,p}, 
\frac{(\phi^{s+\log}_{1,p})^{p} - v^{p}}{(\phi^{s+\log}_{1,p})^{p-1}}\right)
- \mathcal{J}_{s+\log,p}\left(v, 
\frac{v^{p} - (\phi^{s+\log}_{1,p})^{p}}{v^{p-1}}\right)
= 0.
\end{equation*}
Finally, by Lemma~\ref{Lemma2}, we conclude that there exists a constant $c > 0$ such that  $ \phi^{s+\log}_{1,p} = c v. $

\medskip

\noindent\text{(3)} Let $u \in W_0^{s+\log, p}(\Omega)$ be an eigenfunction associated with the eigenvalue $\lambda$ such that $u > 0$ a.e. in $\Omega$, and let $  \phi_{1,p}^{s+\log} $ be the positive, $L^p$-normalized first eigenfunction. By the variational characterization \eqref{equ21}, we immediately have
	\begin{equation} \label{LambdaLowerBound}
	\lambda^{s+\log}_{1,p} \leq \frac{\mathcal{J}_{s+\log, p}(u,u)}{\|u\|_{L^p(\Omega)}^p} = \frac{\lambda \|u\|_{L^p(\Omega)}^p}{\|u\|_{L^p(\Omega)}^p} = \lambda.
	\end{equation}
	To establish the reverse inequality, we use the fractional Picone inequality (see \cite[Proposition 4.2]{Brasco-Franzina}). Indeed, for any $\varepsilon > 0$, we consider the nonnegative test function $ \varphi_\varepsilon = \left(\phi_{1,p}^{s+\log}\right) ^p (u+\varepsilon)^{1 - p}\in W_0^{s+\log, p}(\Omega). $ Accordingly, by employing $\varphi_\varepsilon$ as a test function in the weak formulation of the eigenvalue problem satisfied by $u$, we obtain
	\begin{equation} \label{PiconeTest}
	\lambda \int_\Omega \left( \frac{u}{u+\varepsilon} \right)^{p-1} \left(  \phi_{1,p}^{s+\log}\right) ^p dx = \mathcal{J}_{s+\log, p}(u, \varphi_\varepsilon).
	\end{equation}
	Let $P_\varepsilon(x,y)$ denote the fractional Picone difference
	\begin{equation*}
	P_\varepsilon(x,y) := |u(x)-u(y)|^{p-2}(u(x)-u(y)) \left( \frac{\left(\phi_{1,p}^{s+\log}\right) ^p(x)}{(u(x)+\varepsilon)^{p-1}} - \frac{\left(\phi_{1,p}^{s+\log}\right) ^p(y)}{(u(y)+\varepsilon)^{p-1}} \right).
	\end{equation*}
By virtue of the fractional Picone inequality, we have
\[
P_\varepsilon(x,y) \leq \left| \phi_{1,p}^{s+\log}(x) - \phi_{1,p}^{s+\log}(y) \right|^{p}, \quad \forall x,y \in \mathbb{R}^N.
\]
We next decompose the energy functional into the contribution associated with the interior domain and the corresponding cross-interaction terms as follows:
\begin{equation} \label{EnergySplit}
\mathcal{J}_{s+\log, p}(u, \varphi_\varepsilon)
= \frac{1}{2} \iint_{\Omega \times \Omega} P_\varepsilon(x,y) \mathbf{K}_p^{s+\log}(|x-y|) dx dy
+ \int_{\Omega} \int_{\mathbb{R}^{N} \setminus \Omega} P_\varepsilon(x,y) \mathbf{K}_p^{s+\log}(|x-y|) dy dx.
\end{equation}
	On the interior domain $\Omega \times \Omega$, the condition $|x-y| \leq \mathrm{diam}(\Omega) < e^{-\frac{1}{sp}}$ ensures that the kernel is strictly positive, $\mathbf{K}_p^{s+\log}(|x-y|) > 0$. Thus, the inequality is preserved
	\begin{equation} \label{InteriorBound}
	\frac{1}{2} \iint_{\Omega \times \Omega} P_\varepsilon(x,y) \mathbf{K}_p^{s+\log}(|x-y|) dx dy \leq \frac{1}{2} \iint_{\Omega \times \Omega} \left| \phi_{1,p}^{s+\log}(x)-\phi_{1,p}^{s+\log}(y)\right| ^p \mathbf{K}_p^{s+\log}(|x-y|) dx dy.
	\end{equation}
On the cross-domain $\Omega \times \big(\mathbb{R}^{N} \setminus \Omega \big)$, we note that $ u(y) = \phi_{1,p}^{s+\log}(y) = 0. $ So, the Picone difference reduces to
\[
P_\varepsilon(x,y)
= u(x)^{p-1} \frac{\big(\phi_{1,p}^{s+\log}(x)\big)^{p}}{\big(u(x)+\varepsilon\big)^{p-1}}.
\]
Moreover, under the assumptions $\mathrm{diam}(\Omega) < e^{-\frac{1}{sp}}$ and $B(N,s,p) \geq 0$, we infer that
\begin{equation*}
\begin{aligned} 
\int_{\mathbb{R}^{N} \setminus \Omega} \mathbf{K}_p^{s+\log}(|x-y|) dy
&= \int_{\mathbb{R}^{N} \setminus \Omega} 
\frac{B(N, s, p) C(N, s, p) - p C(N, s, p) \ln |x-y|}{|x-y|^{N+sp}}  dy\\[4pt]
&= C(N,s,p)\Bigg[
B(N,s,p)\int_{\mathbb{R}^{N} \setminus \Omega} 
\frac{1}{|x-y|^{N+sp}} dy 
- p \int_{\mathbb{R}^{N} \setminus \Omega} 
\frac{\ln |x-y|}{|x-y|^{N+sp}} dy
\Bigg] > 0.
\end{aligned}
\end{equation*}
Therefore, we conclude that
\begin{equation}\label{CrossBound}
\begin{aligned} 
\int_{\Omega} &\int_{\mathbb{R}^{N} \setminus \Omega} P_\varepsilon(x,y) \mathbf{K}_p^{s+\log}(|x-y|) dy dx \\[4pt]
&= \int_\Omega u(x)^{p-1} \frac{\big(\phi_{1,p}^{s+\log}(x)\big)^{p}}{(u(x)+\varepsilon)^{p-1}} \left(\int_{\mathbb{R}^{N} \setminus \Omega} \mathbf{K}_p^{s+\log}(|x-y|) dy\right)  dx\\[4pt]
&\leq \int_\Omega \big(\phi_{1,p}^{s+\log}(x)\big)^p \left( \int_{\mathbb{R}^{N} \setminus \Omega} \mathbf{K}_p^{s+\log}(|x-y|) dy\right)  dx  \\[4pt]
&= \int_{\Omega} \int_{\mathbb{R}^{N} \setminus \Omega}  \left| \phi_{1,p}^{s+\log}(x)-\phi_{1,p}^{s+\log}(y)\right| ^p \mathbf{K}_p^{s+\log}(|x-y|) dy dx.
\end{aligned}
\end{equation}
	Substituting \eqref{InteriorBound} and \eqref{CrossBound} into \eqref{EnergySplit} yields the global bound
	\begin{equation} \label{PiconeEnergyBound}
	\mathcal{J}_{s+\log, p}(u, \varphi_\varepsilon) \leq \mathcal{J}_{s+\log, p}\left( \phi_{1,p}^{s+\log}, \phi_{1,p}^{s+\log}\right)  = \lambda^{s+\log}_{1,p}.
	\end{equation}
	Combining \eqref{PiconeTest} and \eqref{PiconeEnergyBound}, we obtain
	\begin{equation*}
	\lambda \int_\Omega \left( \frac{u}{u+\varepsilon} \right)^{p-1} \left( \phi_{1,p}^{s+\log}\right) ^p dx \leq \lambda^{s+\log}_{1,p}.
	\end{equation*}
	Since $u > 0$ a.e. in $\Omega$, the sequence of functions $\left( \frac{u}{u+\varepsilon} \right)^{p-1}\left( \phi_{1,p}^{s+\log}\right) ^p$ converges pointwise a.e. to $\left( \phi_{1,p}^{s+\log}\right) ^p$ as $\varepsilon \to 0^+$. By the Monotone Convergence Theorem, and passing to the limit while taking into account that $\mathrm{supp}(u)=\Omega$, we obtain
	\begin{equation*}
	\lambda \int_\Omega \left( \phi_{1,p}^{s+\log} \right)^p  dx \leq \lambda^{s+\log}_{1,p}.
	\end{equation*}
	Since $\|\phi_{1,p}^{s+\log}\|_{L^p(\Omega)} = 1$, it follows that $\lambda \leq \lambda^{s+\log}_{1,p}$. Together with \eqref{LambdaLowerBound}, this yields  $ \lambda = \lambda^{s+\log}_{1,p} $.
\end{proof}

\begin{remark}
	Recall that $\phi^{s}_{1,p}$ denotes the positive, normalized eigenfunction of the fractional $p$-Laplacian $(-\Delta)^s_p$ in $W^{s,p}_0(\Omega)$ associated with the first eigenvalue $\lambda^{s}_{1,p}$, which admits the variational characterization
	\[
	\lambda^{s}_{1,p}
	= \inf_{\substack{u \in W^{s,p}_0(\Omega)\setminus\{0\} \\ \|u\|_{L^{p}(\Omega)} = 1}}
	\frac{C(N,s,p)}{2}
	\iint_{\mathbb{R}^{2N}}
	\frac{|u(x)-u(y)|^{p}}{|x-y|^{N+sp}} dx dy.
	\]
	It is well-known that $\lambda^{s}_{1,p} > 0$ and that the above infimum is attained.  By contrast, as shown in Theorem~\ref{theorem6}, the first eigenvalue $\lambda^{s+\log}_{1,p}$ associated with the fractional logarithmic $p$-Laplacian $(-\Delta)_{p}^{s+\log}$ is, in general, not necessarily positive (see Proposition \ref{proposition1}). However, under appropriate geometric conditions on the domain $\Omega$, one can guarantee the strict positivity of $\lambda^{s+\log}_{1,p}$.
\end{remark}

	\section*{Statements and Declarations}

\subsection*{Ethics approval and consent to participate}
Not applicable.
\subsection*{Funding}
Not applicable
\subsection*{Availability of data and materials}
Not applicable
\subsection*{Conflict of interest}
The authors declare that there is no conflict of interest.

\addtocontents{toc}{\protect\setcounter{tocdepth}{2}}

\end{document}